\newtheorem{theorem}{Theorem}[section]
\newtheorem{conjecture}[theorem]{Conjecture}
\newtheorem{corollary}[theorem]{Corollary}
\newtheorem{definition}[theorem]{Definition}
\newtheorem{lemma}[theorem]{Lemma}
\newtheorem{problem}[theorem]{Problem}
\newtheorem{proposition}[theorem]{Proposition}
\newtheorem{remark}[theorem]{Remark}
\newcommand{\tG}{\widetilde{G}}
\newcommand{\perc}[1]{\mu^{\text{P}}_{#1, p}}
\newcommand{\rc}[1]{\mu^{\text{RC}}_{#1, p, q}}
\newcommand{\forest}[1]{\mu^{\text{F}}_{#1, \lambda} }
\newcommand{\ust}[1]{\mu^{\text{UST}}_{#1}}
\newcommand{\altperc}[2]{\nu^{\text{P}}_{#1, #2}}
\newcommand{\altforest}[2]{\nu^{\text{F}}_{#1, #2} }
\begin{document}

\title{The bunkbed problem and the random cluster model}

\author{Arvind Ayyer}
\address{Arvind Ayyer, Department of Mathematics, Indian Institute of Science, Bangalore - 560012, India}
\email{arvind@iisc.ac.in}

\author{Svante Linusson}
\address{Svante Linusson, Department of Mathematics, KTH-Royal Institute of Technology, SE-100 44, Stockholm, Sweden}
\email{linusson@math.kth.se}

\author{Mohan Ravichandran}
\address{Mohan Ravichandran, Zessta Software Services, A block, Cyber Gateways, Madhapur, Hyderabad 500081}
\email{mohan.ravichandran@gmail.com}

\date{\today}

\keywords{bunkbed problem, random cluster measure, arboreal gas measure, correlation inequalities, outerplanar graphs}
\subjclass[2020]{60C05, 60K35, 05C40}

\begin{abstract}
The well known bunkbed conjecture about percolation on finite graphs is now resolved; Gladkov, Pak and Zimin, building upon work of Hollom, have constructed a counterexample. We revisit this conjecture and study it in the broader context of the class of random cluster measures. We show that the major partial (positive) results on the bunkbed conjecture can also be proved for all random cluster measures, including the results for complete graphs, complete bipartite graphs, and the case when $p \uparrow 1$. 

The arboreal gas measure for forests is another limit of the random cluster measure for which we conjecture the inequality to be true and provide proofs in special cases. We identify a setting where the conjecture does hold, that of ``almost spanning tree measures''. A further analysis leads to intriguing correlation inequalities that complement Rayleigh's inequalities for spanning tree measures.
\end{abstract}

\maketitle

\section{Introduction}
\label{sec:intro}

Combinatorial discrete probability is notorious for easy to state and plausible conjectures. Some of these are utterly frustrating, with no sign of resolution for decades. Sometimes though, these do get resolved by dint of human ingenuity after a long barren spell. And sometimes, these resolutions are so counterintuitive that the reader swears never to trust their intuition again. An exemplar of this is the bunkbed conjecture due to Kasteleyn~\cite[Remark~5]{vandenberg-kahn-2001}, which predicts that two natural distance metrics on finite graphs have the same qualitative behavior. To describe this conjecture, we first need the concept of bunkbed graphs. 

We will denote graphs by $G = (V, E)$ where $V$ is the vertex set and $E$ is the edge set.
Unless noted otherwise, we will denote $n$ for the number of vertices.

\begin{definition}
The \textit{bunkbed graph} of the graph $G = (V, E)$ is the graph $\tG = (\widetilde{V}, \widetilde{E})$ where 
\[
 \widetilde{V} = \{v_1 \mid v \in V\} \cup (\{v_2 \mid v \in V\},
\]
and 
\[
\widetilde{E} = \{(v_1,v_2) \mid  v \in V\} \cup 
\{(u_i,v_i) \mid (u, v) \in E, \, i \in \{1, 2\}\}.
\]
\end{definition}

See \cref{fig:bunkbed_example} for an example. 

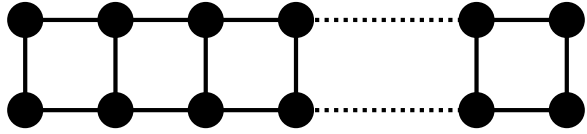
\begin{figure}[h]
    \centering
    \begin{tikzpicture}[scale=1.2]\setlength{\tabcolsep}{0pt}
		\node  (1) at (0, 0) {};
		\node  (2) at (1, 0) {};
		\node  (3) at (2, 0) {};
		\node  (4) at (5, 0) {};
		\node  (5) at (6, 0) {};
  		\node  (1) at (0, 1) {};
		\node  (2) at (1, 1) {};
		\node  (3) at (2, 1) {};
		\node  (4) at (5, 1) {};
		\node  (5) at (6, 1) {};
	
	\draw[-,black,ultra thick] (0, 0) to (1,0) to (2, 0) to (3, 0) to (3, 1) to (2, 1) to (1, 1) to (0, 1) to (0, 0);
    \draw[-,black,ultra thick] (1, 0) to (1, 1);
    \draw[-,black,ultra thick] (2, 0) to (2, 1);
    \draw[black,dotted, ultra thick] (3, 0) to (5,0) ;
    \draw[black,dotted, ultra thick] (3, 1) to (5,1) ;

    \draw[-,black, ultra thick] (5, 0) to (6,0) to (6, 1) to (5, 1) to (5, 0) ;
        \fill(0,0) circle(.2); 
	    \fill(1,0) circle(.2); 
		\fill(2,0) circle(.2); 
		\fill(3,0) circle(.2); 
	    \fill(5,0) circle(.2); 
		\fill(6,0) circle(.2); 
		\fill(0,1) circle(.2); 
	    \fill(1,1) circle(.2); 
		\fill(2,1) circle(.2); 
		\fill(3,1) circle(.2); 
	    \fill(5,1) circle(.2); 
		\fill(6,1) circle(.2);

    \end{tikzpicture}
    \caption{The bunkbed graph of the line graph, the ladder graph}
    \label{fig:bunkbed_example}
\end{figure} 

There are several natural probability measures one can associate to graphs. 
Let $G = (V, E)$ be a graph. 
We will focus on the situation where probability measures will be associated to the edge set $E$.
If $\nu$ is such a probability measure and $A$ is an event, we will denote by {$\nu(A)$} the probability of the event $A$ under $\nu$.
{We will extend this notation for connectivity events as well. Suppose $a,b \in V$. By $\nu(a \leftrightarrow b)$, we will mean the probability that $a$ is connected to $b$ under $\nu$.} 
For us, the most general measure is the random cluster measure, first studied in the doctoral thesis of Fortuin~\cite{fortuin-1971,fortuin-kasteleyn-1972}. 
For any subset $S \subseteq E$, let $\kappa(S)$ denote the number of connected components of $S$.

\begin{definition}
\label{def:rc}
Let $p, q$ be real numbers satisfying $p \in [0, 1]$ and $q > 0$.
Then the \emph{random cluster measure} 
$\rc{G}$
is a probability measure on 
$E$ which is such that for any 
subset 
$S \subset E(G)$, 
\[
\rc{G}(S) \propto p^{|S|} (1-p)^{m - |S|} q^{\kappa(S)}.
\]
\end{definition}

The random cluster measure in \cref{def:rc} simultaneously generalizes several measures on edges of graphs in the literature.
When $q = 1$, this is the \textit{(bond) percolation measure} with parameter $p$ that chooses every edge independently with probability $p$, and which we denote $\perc{G}$. 
When $q = 2$, this is closely related to the very well-studied Ising model on $G$, 
and more generally, when $q$ is a positive integer, it is related to the Potts model on $G$ with $q$ colours~\cite[Section (1.4)]{grimmett-2006}. 

Let $G$ be a graph and $\tG$ the associated bunkbed graph. 
The original bunkbed conjecture was posed by Kasteleyn (see \cite[Remark~5]{vandenberg-kahn-2001}) in the 1980's for the percolation measure and states that for any pair of vertices $u$ and $v$ in $G$, we have that
\begin{align} 
\label{bunkbed}
\perc{\tG} (u_1 \leftrightarrow v_1) \geq \perc{\tG} (u_1 \leftrightarrow v_2). 
\end{align}
This conjecture saw a trickle of positive results over the years \cite{haggstrom-2003, buyer_2019, richthammer_2022, hutchcroft_et_al_2023, hollom_2024}, but was recently sensationally disproved by Gladkov, Pak and Zimin.

\begin{theorem}[{\cite[Theorem 1.2]{gladkov_pak_zimin_2024}}]
\label{thm:bunkbed false}
The bunkbed conjecture is false 
for the percolation measure. Indeed, there is a planar counterexample to this conjecture. 
\end{theorem}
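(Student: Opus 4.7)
The plan is to exhibit an explicit finite \emph{planar} graph $G$, a pair of vertices $u,v$ in $G$, and a value $p \in (0,1)$ for which the percolation inequality \eqref{bunkbed} is strictly violated. The natural route, and the one I would pursue, is to promote Hollom's hypergraph-level counterexample to a genuine graph by means of a planar ``gadget replacement,'' and then verify the reversal on the resulting graph by an explicit probability computation.

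First, I would start from Hollom's 3-uniform hypergraph $H$ on which the hyperedge-percolation analogue of \eqref{bunkbed} is known to fail (each hyperedge is ``open'' with probability $p$, identifying all of its endpoints, and ``closed'' otherwise), and identify the pair of vertices $u,v$ witnessing that failure. I would then define a planar graph $G$ by replacing each 3-hyperedge $\{a,b,c\}$ of $H$ with a small planar gadget subgraph $\Gamma_{abc}$ (for example, a subdivided triangle or a claw with a fresh interior vertex) whose bond percolation at a suitably chosen edge probability $p'$ close to $1$ induces, on the terminal triple $\{a,b,c\}$, a connectivity partition overwhelmingly concentrated on the two patterns ``all three connected'' and ``all three mutually separated,'' with weights approximating $p'$ and $1-p'$ respectively. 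After this replacement, one must draw $G$ in the plane, exploiting the planar structure of the chosen hypergraph $H$ (or first planarizing it) and being careful in nesting the gadgets so as not to introduce crossings.

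Second, I would analyze the bunkbed graph $\tG$ obtained from this $G$. Because bond percolation factorizes across disjoint gadgets, the coarse connectivity partition pulled back from $\tG$ to the hypergraph level converges, as the gadget parameter $p'$ is tuned, to the hyperedge-bunkbed distribution on the bunkbed hypergraph of $H$. Thus Hollom's strict inequality at the hypergraph level lifts to a strict inequality
\[
\perc{\tG}(u_1 \leftrightarrow v_1) < \perc{\tG}(u_1 \leftrightarrow v_2)
\]
for the appropriate $p$, yielding a planar counterexample. The principal obstacle is quantitative: the gadget approximation introduces $O(1-p')$ corrections which must be controlled below the size of the gap produced by Hollom's counterexample, and one must check that no ``cross-gadget'' percolation configurations ruin the reduction. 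In practice this will either be carried out by a careful perturbative estimate or, more realistically, by a computer-assisted enumeration of connectivity events on the (moderately sized) finite graph $\tG$, which constitutes the final and hardest step of the proof.
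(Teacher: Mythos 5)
Your overall strategy is the right one, and it is exactly the route taken by Gladkov--Pak--Zimin (and recapitulated for general $q$ in \cref{sec:app}): start from Hollom's hypergraph counterexample, replace each $3$-hyperedge by a planar graph gadget whose induced connectivity partition on the three terminals approximates the two-atom hyperedge distribution, and verify the strict reversal by an exact, computer-assisted computation. However, two of your concrete steps would fail as written.

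First, the gadgets you propose cannot do the job. For a claw or a (subdivided) triangle with every edge open with probability $p'$ close to $1$, the induced distribution on partitions of $\{a,b,c\}$ gives the partial patterns ($ab\mid c$, etc.) asymptotically \emph{more} weight than the all-separated pattern as $p'\uparrow 1$, so it is never close to the required distribution (``all connected'' with probability $p$, ``all separated'' with probability $1-p$, for a fixed $p\in(0,1)$). In fact no gadget reproduces that distribution exactly; the entire difficulty of the problem is to make the parasitic probabilities $p_{a\mid bc}$, $p_{b\mid ac}$, $p_{c\mid ab}$ small \emph{relative to the extremely small gap} in Hollom's example. This is what the weighted fan gadget $G_n$ of \cref{fig:gadget} is engineered to achieve (horizontal edges at weight $1/100$, fan edges at $99/100$, $n$ on the order of $31$), after which the weighted edges must still be realized in an unweighted graph by series--parallel substitutions. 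Second, you omit the posts. Hollom's hypergraph example is a counterexample to the \emph{posted} bunkbed problem (with $T=\{3,5,8\}$ in \cref{fig:Hollom}), so the gadget replacement only refutes the posted version (\cref{ques:postsRC} at $q=1$), not \eqref{bunkbed} itself. An additional bootstrapping step is required: attach a large number $k$ of pendant vertices at each post so that in the full bunkbed percolation each post becomes a ``quasi-post'' with probability $1-\alpha^k$, and check that all other realizations of the random post set either force equality of the two connection probabilities (the posts form a cutset) or contribute an error exponentially small in $k$; see the end of \cref{sec:app}. Without both corrections your construction does not yield a counterexample to the original conjecture.
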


The planar counterexample is carefully chosen to simulate a hypergraph due to Hollom~\cite{hollom_2025}, which was in turn used by him to construct a counterexample to a natural generalization of the bunkbed conjecture to hypergraphs. 

The bunkbed \sout{conjecture} problem is best known in the context of bond percolation, but Kasteleyn himself postulated an analogous statement for the random cluster measure. We formulate it as follows.

\begin{problem}
\label{prob:origRC}
Let $G$ be a graph and $\tG$ the associated bunkbed graph. 
For which values of $p,q$ is it true that for any pair of vertices 
$u$ and $v$ in $G$, we have
\begin{align} 
\label{bunkbed RC}
\rc{\tG}(u_1 \leftrightarrow v_1) \geq \rc{\tG}(u_1 \leftrightarrow v_2)?
\end{align}
\end{problem}

H\"aggstr\"om proved an important case of the bunkbed conjecture for the random cluster measure.

\begin{theorem}[{\cite[Theorem 2.4]{haggstrom-2003}}]
The bunkbed conjecture for the random cluster model is true when $q = 2$, i.e. 
\eqref{bunkbed RC} is true for all graphs and all $p \in [0, 1]$. 
\end{theorem}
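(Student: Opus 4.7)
My approach is to pivot to the Ising side via the Edwards--Sokal coupling, where the desired random cluster inequality becomes an Ising two-point function inequality on $\tG$, and then to attack that inequality by exploiting the bunkbed $\mathbb{Z}/2$-symmetry together with a conditioning argument.

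I begin with the Edwards--Sokal coupling at $q = 2$. Sampling $\omega \sim \rc{\tG}$ is equivalent to first drawing a spin configuration $\sigma \in \{\pm 1\}^{\widetilde{V}}$ from the ferromagnetic Ising measure on $\tG$ at inverse temperature $\beta$ with $p = 1 - e^{-2\beta}$, and then independently opening each edge $(x,y) \in \widetilde{E}$ with probability $p$ if $\sigma_x = \sigma_y$ and probability $0$ otherwise. Connectivity in the resulting open subgraph coincides with lying in a common monochromatic cluster, and a direct calculation yields
\[
\rc{\tG}(a \leftrightarrow b) \;=\; \bigl\langle \sigma_a \sigma_b \bigr\rangle^{\mathrm{Ising}}_{\tG}.
\]
Hence \eqref{bunkbed RC} at $q = 2$ reduces to proving $\langle \sigma_{u_1} \sigma_{v_1}\rangle \geq \langle \sigma_{u_1} \sigma_{v_2}\rangle$ for the ferromagnetic Ising model on $\tG$.

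Next I would invoke the bunk-swap automorphism $\iota(w_i) = w_{3-i}$ of $\tG$. Since $\iota$ preserves the Ising Hamiltonian, $\langle \sigma_{u_1} \sigma_{v_2}\rangle = \langle \sigma_{u_2} \sigma_{v_1}\rangle$, and the target becomes $\langle (\sigma_{u_1} - \sigma_{u_2})\,\sigma_{v_1}\rangle \geq 0$. Conditioning on the pair $(\sigma_{v_1},\sigma_{v_2})$, the integrand vanishes pointwise whenever the two spins agree, and by the global $\sigma \mapsto -\sigma$ symmetry the two remaining atoms contribute equally. The inequality thus reduces to showing
\[
\mathbb{E}^{\mathrm{Ising}}_{\tG}\bigl[\sigma_{u_1} \,\big|\, \sigma_{v_1}=+1,\ \sigma_{v_2}=-1\bigr] \;\geq\; 0,
\]
where the conditional law is the Ising measure on $\widetilde{V} \setminus \{v_1,v_2\}$ carrying external field $+\beta$ at each $w_1$ and $-\beta$ at each $w_2$ for every $w$ adjacent to $v$ in $G$. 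This field configuration is $\iota$-antisymmetric with non-negative values on the top bunk.

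The final step is the main obstacle: showing that the magnetization at any top-bunk vertex is non-negative in such a mixed-sign external field. The classical GKS inequality does not apply as stated because the field carries both signs, so one needs either a Ginibre-type monotonicity argument comparing the positive and negative contributions, or, equivalently, a random-current proof of $Z[\{u_1,v_1\}] \geq Z[\{u_1,v_2\}]$ via an explicit weight-preserving injection obtained by reflecting each current along its last vertical crossing. I would package either route as an induction on $|V|$, strengthening the inductive statement so that an arbitrary $\iota$-antisymmetric field with non-negative top entries is allowed; this structural hypothesis is preserved when the vertex $v$ is peeled off (the new $\pm\beta$ fields at the neighbours of $v$ are themselves $\iota$-antisymmetric). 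It is exactly at this last step that the special role of $q = 2$, through the Ising spin companion, is doing the real work, with no analogous device available for general $q$.
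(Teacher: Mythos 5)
The paper does not prove this statement itself; it is quoted from H\"aggstr\"om, whose argument the authors describe only as ``correlation inequalities for the Ising model, together with a careful conditioning argument''. Your plan follows the same broad philosophy, and the reduction in your first four steps is correct: the Edwards--Sokal identity $\rc{\tG}(a\leftrightarrow b)=\langle\sigma_a\sigma_b\rangle$ at $p=1-e^{-2\beta}$, the swap symmetry giving $\langle\sigma_{u_1}\sigma_{v_2}\rangle=\langle\sigma_{u_2}\sigma_{v_1}\rangle$, and the reduction to $\mathbb{E}[\sigma_{u_1}\mid\sigma_{v_1}=+1,\sigma_{v_2}=-1]\ge 0$ are all sound (one wording slip: when $\sigma_{v_1}=\sigma_{v_2}$ the integrand does not vanish pointwise; it is the conditional expectation of $\sigma_{u_1}-\sigma_{u_2}$ that vanishes, by the swap symmetry). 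The genuine gap is that the fifth step, which you correctly identify as carrying the entire content of the theorem, is not proved. None of the three routes you sketch closes it as stated: Ginibre/GKS inequalities require fields of a single sign; a random current with sources $\{u_1,v_2\}$ is a multigraph, not a path, so ``its last vertical crossing'' is not well defined, and naive path-reflection injections are precisely what fail for the bunkbed problem at $q=1$ (where the statement is false), so any such injection must exploit $q=2$ in a way you have not specified; and the proposed induction does not close, because marginalizing out a vertex of an Ising model produces multi-body interactions among its neighbours rather than external fields, while conditioning on its spins is exactly the step that created the mixed-sign field in the first place.

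The standard repair --- and essentially H\"aggstr\"om's actual argument --- is to condition on the agreement variables $\tau_w:=\sigma_{w_1}\sigma_{w_2}$ rather than on $(\sigma_{v_1},\sigma_{v_2})$. Writing $\sigma_{w_2}=\tau_w\sigma_{w_1}$, the Hamiltonian of the bunkbed Ising model becomes $-\beta\sum_{(w,w')\in E}\sigma_{w_1}\sigma_{w'_1}\bigl(1+\tau_w\tau_{w'}\bigr)-\beta\sum_{w}\tau_w$, so conditionally on $\tau$ the level-one spins form a \emph{zero-field} ferromagnetic Ising model on $G$ with coupling $2\beta$ on edges where $\tau_w=\tau_{w'}$ and coupling $0$ otherwise. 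Since $\sigma_{u_1}-\sigma_{u_2}=\sigma_{u_1}(1-\tau_u)$, one gets $\langle(\sigma_{u_1}-\sigma_{u_2})\sigma_{v_1}\rangle = 2\,\mathbb{E}\bigl[\mathbf{1}[\tau_u=-1]\,\langle\sigma_{u_1}\sigma_{v_1}\rangle_{\tau}\bigr]$, and each conditional two-point function is non-negative by the first Griffiths inequality. This sidesteps the mixed-sign-field problem entirely; I recommend replacing your step five with this conditioning.
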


As mentioned above, the random cluster measure at $q = 2$ is closely related to the Ising model, and H\"aggstr\"om proved this result using well-understood correlation inequalities for the Ising model, together with a careful conditioning argument. 
We prove the following result in \cref{sec:app} building on the example in~\cite{gladkov_pak_zimin_2024}.

\begin{theorem}
\label{thm:bunkbed rc false}
The bunkbed conjecture for the random cluster measure $\rc{G}$ is not true in general for $0.56 < q < 1.43$.
\end{theorem}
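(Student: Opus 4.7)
The plan is to upgrade the Gladkov--Pak--Zimin counterexample from the percolation case $q=1$ to an open neighbourhood of $q=1$ by exploiting continuity of the random cluster weights in $q$. Fix a planar graph $G$ and vertices $u,v \in V(G)$ for which \cref{thm:bunkbed false} furnishes some $p_0 \in (0,1)$ with
\[
\perc{\tG}(u_1 \leftrightarrow v_1) - \perc{\tG}(u_1 \leftrightarrow v_2) < 0.
\]
Set $\widetilde m = |\widetilde E|$ and define the signed unnormalised weight
\[
F(p,q) = \sum_{S \subseteq \widetilde E} \bigl(\mathbf{1}\{u_1 \leftrightarrow v_1 \text{ in } S\} - \mathbf{1}\{u_1 \leftrightarrow v_2 \text{ in } S\}\bigr)\, p^{|S|}(1-p)^{\widetilde m - |S|} q^{\kappa(S)}.
\]
The full partition function $Z(p,q) = \sum_{S} p^{|S|}(1-p)^{\widetilde m - |S|}q^{\kappa(S)}$ is a strictly positive polynomial in $q$ for $q>0$ whenever $p \in (0,1)$, so $F(p_0,q)$ and $\rc{\tG}(u_1 \leftrightarrow v_1) - \rc{\tG}(u_1 \leftrightarrow v_2)$ carry the same sign.

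By the choice of $p_0$ we have $F(p_0,1) < 0$, and $q \mapsto F(p_0,q)$ is a polynomial, hence continuous. Consequently there is an open interval containing $1$ on which $F(p_0,q)<0$, which already settles the qualitative statement that the bunkbed inequality \eqref{bunkbed RC} fails for $q$ near $1$. Note that the same continuity argument applied at fixed $q$ would additionally show that the set of $(p,q)$ for which the inequality fails is open in $(0,1) \times (0,\infty)$, so the interval obtained at $p = p_0$ can in principle be enlarged by optimising over $p$.

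To pin down the explicit range $(0.56, 1.43)$, we must compute $F(p_0, q)$ as a univariate polynomial in $q$ on the particular GPZ graph (or on a minor reduction of it) at the $p_0$ identified in \cite{gladkov_pak_zimin_2024}, then numerically isolate the two sign-change roots bracketing $q=1$ and round inward to obtain safe rational bounds. The main obstacle is computational rather than conceptual: naive enumeration over $2^{\widetilde m}$ subsets is infeasible, so one would implement a transfer-matrix evaluation that exploits the narrow separators intrinsic to the ladder structure of $\tG$, or equivalently a deletion--contraction recursion for the multivariate Tutte polynomial with two extra connectivity markings tracking the events $\{u_1 \leftrightarrow v_1\}$ and $\{u_1 \leftrightarrow v_2\}$. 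Careful control of the numerical error in locating the roots of $F(p_0,q)$ then yields the asserted interval $0.56 < q < 1.43$.
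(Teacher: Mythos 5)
Your continuity argument is sound as far as it goes: since $Z(p,q)>0$, the sign of the difference of connection probabilities agrees with the sign of the polynomial $F(p_0,q)$, and $F(p_0,1)<0$ by the Gladkov--Pak--Zimin result, so the inequality \eqref{bunkbed RC} fails on \emph{some} open interval around $q=1$. That is a clean, elementary proof of a qualitative version of the theorem, and it is a genuinely different (and shorter) route to that weaker statement than the paper takes.

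However, the theorem as stated asserts failure on the specific interval $(0.56,1.43)$, and your proposal does not establish this; it defers the entire quantitative content to a computation that is not carried out and whose feasibility and outcome are both unclear. Two concrete problems. First, the full GPZ planar graph is far too large for the exact evaluation of $F(p_0,q)$ you describe, and there is no reason its sign-change roots would bracket the interval $(0.56,1.43)$ in particular --- that interval is tied to a different object (it is governed in the limit by the cubic $q^3-5q^2+10q-7$, whose real root is about $1.43$, arising from Hollom's hypergraph). Second, and more importantly, the paper's proof has a two-stage structure that your plan collapses: the tractable exact computation is done on the \emph{bunkbed-with-posts} graph obtained by slotting the GPZ gadget $G_n$ into Hollom's hypergraph (a small enough graph that $P_n(1/100,q)$ can be computed in exact rational arithmetic, with the interval stabilising at $[0.56,1.43]$ as $n$ grows), and then a separate bootstrapping argument --- attaching $k$ pendant vertices at each post so that posts become ``quasi-posts'' with probability $1-\alpha(q)^k$, controlled via a comparison lemma (\cref{lem:qcomparison}) between random cluster measures on a graph and on a union of graphs --- converts this into a counterexample for the regular bunkbed problem, followed by series-parallel extensions to remove the edge weights. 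None of this second stage follows from continuity in $q$, and it is the part of the argument your proposal is missing. To salvage your approach you would either need to actually perform a root isolation on a graph known to realise the interval $(0.56,1.43)$ for the regular bunkbed problem, or reproduce the posts-to-no-posts reduction.
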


In this article, we take a few steps towards understanding what can hold for other random cluster measures. 
A natural measure on forests can be obtained as a weak limit of the random cluster measure. 
Let $\forest{G}$
be the measure on forests in $G$ such that for a forest $F$, we have that
\[
\forest{G} \propto \lambda^{E(F)}.
\]
This is called the \emph{arboreal gas measure} and was first studied in~\cite{caracciolo-et-al-2004,jacobsen-salas-sokal-2005}.
One can show that~\cite[Equation (1.22)]{grimmett-2006}
\[
\mu^{AG}_{G, \lambda}, 
\rc{G} \Big|_{p = \lambda q} \underset{q \rightarrow 0}{\longrightarrow} \forest{G}, 
\]
in the sense of weak convergence. 
The \textit{uniform spanning tree measure} $\ust{G}$, that assigns equal probability to all spanning trees,
can also be obtained as a weak limit of these measures as follows.
Then we have~\cite[Theorem (1.23)]{grimmett-2006}
\[
\rc{G} \underset{\stackrel{q \rightarrow 0}{q/p \rightarrow 0}}{\longrightarrow} \ust{G}, 
\]
in the sense of weak convergence. 

In contrast to \cref{thm:bunkbed rc false}, we believe that the following conjecture holds.

\begin{conjecture}
\label{conj:forest}
Let $G$ be a graph and $u, v$ be vertices of $G$. 
Then
\begin{align} 
\label{bunkbed forests}
\forest{\tG}(u_1 \leftrightarrow v_1) \geq \forest{\tG}(u_1 \leftrightarrow v_2).
\end{align}
\end{conjecture}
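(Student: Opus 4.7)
The plan is to exploit the identity $\forest{G}(\cdot) = \perc{G}(\cdot \mid \mathcal{F})$, valid with $p = \lambda/(1+\lambda)$, where $\mathcal{F}$ denotes the event that the edge set contains no cycle. Under this identity, the conjecture \eqref{bunkbed forests} is equivalent to
\[
\perc{\tG}\bigl((u_1 \leftrightarrow v_1) \cap \mathcal{F}\bigr) \geq \perc{\tG}\bigl((u_1 \leftrightarrow v_2) \cap \mathcal{F}\bigr).
\]
Since bond percolation bunkbed itself fails by \cref{thm:bunkbed false}, the conditioning $\mathcal{F}$ must do real work: the hope is that the Gladkov--Pak--Zimin counterexamples, which rely on a rich cycle structure, are suppressed under $\mathcal{F}$ and that the forest constraint is strong enough to restore the inequality.

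First, I would condition on the vertical profile $W$, the set of edges of the form $(w_1, w_2)$ present in the forest. Given $W$, the top and bottom copies of $G$ communicate only through $W$, and swapping the two copies is a weight-preserving involution on forests with vertical profile $W$. Next, for any forest $F$ with $u_1 \leftrightarrow v_2$, consider the unique tree-path $P$ from $u_1$ to $v_2$ in $F$; since the endpoints lie on different copies, $P$ traverses an odd number of vertical edges. Let $(w_1,w_2)$ be the last such edge along $P$, and reflect the portion of $F$ ``beyond'' $(w_1,w_2)$ across the top/bottom symmetry. If this reflection always produced a forest $F'$ with $u_1 \leftrightarrow v_1$ and $|F'| = |F|$, the inequality would follow immediately because percolation weights depend only on $|F|$, and averaging over $W$ would give the result under $\forest{\tG}$.

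The main obstacle is exactly the one that defeats every naive attack on the classical bunkbed problem: reflected top edges can coincide with pre-existing bottom edges, producing multi-edges or, worse, cycles that violate the forest constraint. The arboreal setting gives one extra lever, namely that $F \in \mathcal{F}$ already forbids the configurations that such collisions would create; turning this into a genuine injection plausibly requires reflecting in blocks determined by the canonical component decomposition of $F$ rather than along a single path, so each block-flip is justifiable within a fixed conditioning class. As a fallback, I would verify the conjecture in the tractable limits $\lambda \downarrow 0$ (perturbative expansion in $\lambda$, reducing to checking the inequality forest by forest of fixed small size) and $\lambda \uparrow \infty$ (for connected $G$, the measure concentrates on spanning trees and the sharpened comparison reduces via Rayleigh's principle to a statement about effective resistances on $\tG$, where the top/bottom swap symmetry is exact), and then interpolate in $\lambda$ via a differentiation-in-$\lambda$ or cluster-expansion argument. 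Making the block-reflection step work in full generality is the hardest part, and one expects, in line with the ``almost spanning tree measures'' theme of the abstract, that unconditional progress will first come in regimes where collisions are forced to be rare.
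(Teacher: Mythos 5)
The statement you are trying to prove is \cref{conj:forest}, which is stated in the paper as an open \emph{conjecture}: the authors do not prove it, and only establish special cases (outerplanar graphs with at most two posts in \cref{thm:onepostcorrelation} and \cref{thm:twopostcorrelation}, and the large-$\lambda$ limit in \cref{thm:largel}). Your proposal is likewise not a proof. The identity $\forest{G}(\cdot)=\perc{G}(\cdot\mid\mathcal{F})$ with $p=\lambda/(1+\lambda)$ is correct, and your reflection strategy is the classical one, but the step you yourself flag as the ``main obstacle'' is precisely where the argument breaks and you leave it unresolved. Concretely: reflecting the portion of $F$ beyond the last vertical edge of the tree-path from $u_1$ to $v_2$ can map a top edge onto an already-present bottom edge, and even when no edge collision occurs the reflected configuration can acquire a cycle through a vertical edge elsewhere, so $F'$ need not lie in $\mathcal{F}$; moreover the map is not obviously injective, and conditioning on the vertical profile $W$ does not decouple the two levels because acyclicity in $\tG$ couples top and bottom edges through every vertical edge. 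The hope that the forest constraint ``suppresses'' the Gladkov--Pak--Zimin configurations is an expectation, not an argument; nothing in the proposal rules out a forest-model analogue of their counterexample.

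The fallback you describe also does not close the gap. The $\lambda\uparrow\infty$ endpoint is exactly what the paper proves (\cref{thm:largel}), by reducing to an effective-resistance inequality $R_{\tG}(u_1,v_1)\le R_{\tG}(u_1,v_2)$ via \cref{prop:pseudo} and the $M$-matrix property of $L_G+2I$ — so your instinct there matches the paper. But ``interpolating in $\lambda$'' between the two endpoints is not a proof technique: the difference $\forest{\tG}(u_1\leftrightarrow v_1)-\forest{\tG}(u_1\leftrightarrow v_2)$ is a ratio of polynomials in $\lambda$ with no known monotonicity or convexity in $\lambda$, and positivity at $\lambda\to 0$ and $\lambda\to\infty$ does not preclude a sign change in between (this is exactly the failure mode that \cref{thm:bunkbed rc false} exhibits for the random cluster family as $q$ varies). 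To turn your outline into a proof you would need either a genuine injection from $\{u_1\leftrightarrow v_2\}\cap\mathcal{F}$ into $\{u_1\leftrightarrow v_1\}\cap\mathcal{F}$ preserving edge counts, or a coefficientwise comparison of the two generating polynomials in $\lambda$; neither is supplied, and both appear to be essentially as hard as the conjecture itself.
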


We begin with necessary background on algebraic graph theory, the two-coloured bunkbed model on outerplanar graphs and basics of correlation inequalities in \cref{sec:background}.
We then prove the following set of results complementary to the disproof~\cite{gladkov_pak_zimin_2024}.

\begin{enumerate}
    
\item In recent work, Hutchcroft, Kent and Nizi\'c-Nikolac~\cite{hutchcroft_et_al_2023} showed that the regular bunkbed conjecture holds for any graph in the limit $p \uparrow 1$. A little while later, Hollom~\cite{hollom_2024} gave a different proof of this result. We show  in \cref{sec:pto1} that this is also true for all random cluster measures when we fix $q$ and let $p \uparrow 1$. 
    
\item  The bunkbed conjecture for percolation on the complete graph was proved for $p \geq 1/2$ by Buyer~\cite{buyer_2019} and then for all values of $p$ by van Hintum and Lammers~\cite{van_Hintum_2019}. 
The result for percolation on complete bipartite graphs was proved by Richthammer~\cite{richthammer_2022}. 
We show that the bunkbed conjecture is true for all random cluster measures for the complete and complete bipartite graphs in \cref{sec:complete and bipartite}.
    
\item  In \cref{sec:outerplanar} we show that the bunkbed conjecture for arboreal gas measures  holds for all outerplanar graphs as long as there are at most two posts\footnote{The starting point of this paper was our hope that we could prove \cref{conj:forest}. 
The model is more constrained in this setting and one additionally has tools from statistical physics such as the ``magic formula'' for connection probabilities in the arboreal gas model; see \cite[Equation (1.5)]{bauerschmidt2021random}. }. 

\item In \cref{sec:effec}, we give a proof of this conjecture for the further limiting case of the random cluster measure, where one studies the arboreal gas measure for large $\lambda$. 
    It turns out that this is equivalent to a statement about effective resistances between pairs of vertices in the bunkbed graph: The effective resistance between $u_1$ and $v_1$ is less than that between $u_1$ and $v_2$. This can be proved, as we show, using the matrix tree theorem. Our proof in this regime naturally leads to new correlation inequalities that complement Rayleigh's theorem on spanning tree measures, presented in \cref{sec:corr}.

\item What about the bunkbed conjecture for other values of $q$? The situation here is very murky. In \cref{sec:app}, we revisit the counterexample due to Gladkov--Pak--Zimin~\cite{gladkov_pak_zimin_2024} using Hollom's hypergraph gadget~\cite{hollom_2025} and compute the difference in the required probabilities as a function of $q$. 
As a result, we show that the bunkbed conjecture is false for $q \in (0.56, 1.43)$. 

\end{enumerate}

\section{Background}
\label{sec:background}

\subsection{Notation}

{We will use the following notation throughout the paper. For $a,b\in V$, $\nu (ab)$ and 
$\nu (a, b)$ will denote the probability that $a$ and $b$ are in same and different components, respectively. This notation extends to multiple components. For instance, if $a, b, c, d \in V$, $\nu (a, bc, d)$ represents the probability that $a$ is one component, $b$ and $c$ are in a second, and $d$ in a third. Note that we do not place any restriction on the number of components.
}

{For counting we use a slightly different convention. If $a, b \in V$, then by $[a \mid b]$ we mean the number of spanning forests with exactly two components such that $a$ and $b$ are in different components. Similarly, if $a, b, c, d \in V$, $[a\mid bc\mid d]$ represents the number of spanning forests with the minimal number of components (here there will be $3$) such that $a$ is one component $b$ and $c$ are in a second, and $d$ in a third. The expression $[abcd]$ just counts all spanning trees, and so does $[\cdot ]$. We will 
also occasionally use the following extended notation: $[\dots]_1$ will represent the number of forests with exactly one more than the minimal possible number components. For example $[a\mid b]_1$ will have be $2 + 1$ components such that $a$ and $b$ are in different components. 
 }

\subsection{Algebraic graph theory}
\label{sec:alg graph}

The \textit{matrix tree theorem}~\cite[Section II.3, Theorem 12]{bollobas-1998} gives a formula for the number of spanning trees in a graph. Given a graph $G = (V, E)$, recall that the \textit{Laplacian} is defined as the matrix $L_G$ indexed by $V$ whose entries are
\[
L_{G}(v, w) = \begin{cases}
    -1 & \{u,v\}\in E, \\
    \text{deg}(v) & u = v,\\
    0 & \text{otherwise}.
\end{cases}
\]
We will use the notation $L_G(S^c, T^c)$ for $S, T \subset V$ and $|S|=|T|$ to denote the matrix with the rows from $S$ and columns from $T$ removed. Then the matrix tree theorem~\cite[Theorem 4.8]{bapat-2010} says that 
$\det L_G(V \setminus \{v\}, V \setminus \{v\})$ is a positive integer which is equal to the number of spanning trees of $G$ for any $v \in V$. The generalization to counting rooted spanning forests is the \textit{all minors matrix tree theorem}.

\begin{theorem}[{\cite[Page 319]{chaiken-1982}}]
\label{thm:all minors}
Let $G = (V, E)$ and $S, T \subset V$ such that $|S| = |T| = k$
The determinant of $L_G(S^c, T^c)$ is (up to sign) 
the number of forests with $k$ components such that each tree contains exactly one vertex from $S$ and exactly one vertex from $T$. 
\end{theorem}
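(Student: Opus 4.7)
The plan is to prove this via the Cauchy--Binet formula applied to the signed vertex--edge incidence matrix, generalising the classical proof of the matrix tree theorem. First I would fix an arbitrary orientation of the edges of $G$ and form the signed incidence matrix $D \in \mathbb{Z}^{V \times E}$ with $D_{v,e} = +1$ if $v$ is the head of $e$, $-1$ if $v$ is the tail, and $0$ otherwise. A direct check gives $L_G = D D^T$, and more generally, if $D_X$ denotes the submatrix of $D$ with rows indexed by $X \subseteq V$, then
\[
L_G(S^c, T^c) = D_{S^c} (D_{T^c})^T.
\]
Applying Cauchy--Binet to the right-hand side produces
\[
\det L_G(S^c, T^c) = \sum_{F} \det(D_{S^c, F}) \, \det(D_{T^c, F}),
\]
where $F$ ranges over subsets of $E$ of size $n - k$.

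The heart of the argument is then the standard incidence-matrix lemma: for $X \subseteq V$ with $|X| = n-k$ and $F \subseteq E$ with $|F| = n-k$, the square submatrix $D_{X,F}$ satisfies $\det(D_{X,F}) \in \{-1, 0, +1\}$, with $\det(D_{X,F}) \neq 0$ if and only if $F$ is a spanning forest of $G$ with exactly $k$ connected components, each of which contains exactly one vertex of $X^c$. I would prove this by induction on $k$: if some component of $(V, F)$ contains a leaf $v \in X$, expand the determinant along the row indexed by $v$ (a single nonzero $\pm 1$ entry) and induct; if not, all leaves sit in $X^c$, and a dimension count forces $F$ to be a spanning forest with the desired distribution of $X^c$-vertices, after which linear independence can be verified by peeling leaves from outside $X^c$.

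Applied twice (once with $X = S^c$, so $X^c = S$; once with $X = T^c$, so $X^c = T$), the lemma shows that a term in the Cauchy--Binet expansion is nonzero precisely when $F$ is a spanning forest with $k$ components, each containing exactly one vertex from $S$ and exactly one vertex from $T$. Each nonzero term is then $\pm 1$, so
\[
\det L_G(S^c, T^c) = \sum_{F} \varepsilon(F),
\]
with $\varepsilon(F) \in \{\pm 1\}$ and $F$ ranging over the forests described in the theorem.

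The main obstacle is the final sign-alignment step: one must argue that $\varepsilon(F)$ takes the same value on every such $F$, so that the signed sum equals $\pm$ the count. To handle this I would totally order $V$ so that the vertices of $S$ occupy the first $k$ positions and those of $T$ the first $k$ columns, and peel leaves in a canonical order — a leaf-pruning argument shows that the sign of $\det(D_{X,F})$ depends only on the bijection between $X^c$ and the components of $F$, not on $F$ itself. Combining the two bijections (from $S$ and from $T$ to the components) produces a single permutation whose sign is independent of $F$ once $S, T$ and the orientation of $G$ are fixed. This yields the claimed equality up to a global sign, completing the proof.
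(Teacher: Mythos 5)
The paper offers no proof of this statement (it is cited from Chaiken), so the comparison is with the standard literature argument — which is exactly the Cauchy--Binet expansion of $D_{S^c}(D_{T^c})^T$ that you propose. Your factorization $L_G(S^c,T^c)=D_{S^c}(D_{T^c})^T$, the application of Cauchy--Binet, and the incidence-matrix lemma (a maximal square submatrix $D_{X,F}$ has determinant in $\{0,\pm1\}$, nonzero exactly when $F$ is a forest each of whose components meets $X^c$ in exactly one vertex) are all correct and are the right skeleton.

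The gap is in your final sign-alignment step, and it is a genuine one: the claim that the combined permutation ``whose sign is independent of $F$ once $S,T$ and the orientation are fixed'' is false in general. What the leaf-peeling argument actually shows is that, up to a global constant depending only on $S$, $T$ and the orientation, the sign of the product $\det(D_{S^c,F})\det(D_{T^c,F})$ equals $\operatorname{sgn}(\pi_F)$, where $\pi_F\colon S\to T$ is the bijection sending each $s\in S$ to the unique $t\in T$ lying in the same tree of $F$. This bijection, and hence its sign, varies with $F$ whenever $k\ge 2$ and $S\neq T$. Concretely, in $K_4$ with $S=\{1,2\}$, $T=\{3,4\}$, the two-component forests pairing $1$ with $3$ and $2$ with $4$ contribute with the opposite sign to those pairing $1$ with $4$ and $2$ with $3$, so the minor computes a signed difference of forest counts rather than $\pm$ their total. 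This is not a pathology: the paper's own \cref{lem:offdiag} exhibits precisely such a signed difference $[ac\mid bd]-[ad\mid bc]$ arising from a $2\times 2$ off-diagonal minor. So the correct conclusion of your argument (and the precise form of Chaiken's theorem) is $\det L_G(S^c,T^c)=\pm\sum_F\operatorname{sgn}(\pi_F)$, which reduces to the unsigned count only when all $\pi_F$ have the same sign — in particular when $S=T$, where each component contains exactly one vertex of $S=T$ and $\pi_F$ is forced to be the identity. That is the only case in which the paper invokes the unsigned statement (\cref{prop:pseudo}), so your proof can be repaired by either restricting to $S=T$ or by stating and proving the signed version.
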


The special case of $k = 1$ is the usual matrix tree theorem. 

For any real (rectangular) matrix $M$, the \emph{(Moore--Penrose) pseudoinverse} $M^\dagger$ generalizes the notion of the inverse of the matrix~\cite[Problem~7.3.P7]{horn-johnson-1985}. 
We do not need the exact definition of the pseudoinverse, but we need the fact that it is characterized by the following relations. 
\begin{enumerate}
    \item $M M^\dagger M = M$,
    \item $M^\dagger M M^\dagger = M^\dagger$,
    \item $(M M^\dagger)^T = M M^\dagger$, and
    \item $(M^\dagger M)^T = M^\dagger M$.
\end{enumerate}
$M^\dagger$ is known to be unique.

Let $L \equiv L_G$ be the Laplacian of a {connected} graph $G$ with $n$ vertices. 
It is well-known that $L$ is positive semidefinite (PSD).
Then its pseudoinverse satisfies the following properties~\cite{gutman-xiao-2004}.
We will use $I$ (resp. $J$) for the identity matrix (resp. all-ones matrix) of the appropriate size.
\begin{enumerate}
    \item $L^\dagger$ is a real symmetric matrix. Moreover, it is also PSD.
    \item $L J = J L = L^\dagger J = J L^\dagger = 0$,
    \item $L L^\dagger = L^\dagger L = I - J/n$, \label{it: LLdag}
    \item $(L + J/n)^{-1} = L^\dagger + J/n$.    
\end{enumerate}
It is easy to check that either of the last two properties can be used to characterize the pseudoinverse of the Laplacian.

{
There is a natural partial order, denoted $\preceq$, on the set of real symmetric matrices sometimes called the \emph{Loewner order}~\cite{horn-johnson-1985} as follows. If $A, B$ are symmetric matrices, we write $A \preceq B$ if $B-A$ is PSD and $A \prec B$ if $B - A$ is positive definite. In particular, $A$ is PSD (resp. positive definite) if and only if $0 \preceq A$ (resp. $0 \prec A$).
It is a standard result that if $A$ and $B$ are positive definite and $A \prec B$, then
$B^{-1} \prec A^{-1}$~\cite[Corollary 7.7.4]{horn-johnson-1985}.
}

\begin{proposition}
\label{prop:bunkbed pseudoinv}
Let $L$ be the Laplacian of a graph $G$ on $n$ vertices and $\tilde{L}$ be the Laplacian of its bunkbed. Then
\[
\tilde{L}^{\dagger} = \dfrac{1}{2}\left(
\left(
\begin{array}{c|c}
L^{\dagger}&L^{\dagger}\\
\hline
L^{\dagger} & L^{\dagger}
\end{array} \right)
+
\left(
\begin{array}{c|c}
(L+2I)^{-1} & -(L+2I)^{-1}\\
\hline
-(L+2I)^{-1} & (L+2I)^{-1}
\end{array} \right)\right).
\]
\end{proposition}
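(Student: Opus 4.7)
The plan is to diagonalize $\tilde{L}$ in blocks by exploiting the $\mathbb{Z}/2$ symmetry that swaps the two copies of $G$ in $\tilde{G}$, then invert (or pseudoinvert) each block and transform back.

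First I would write the bunkbed Laplacian explicitly in block form. Each vertex $v_i$ in $\tilde{G}$ has all the $G$-neighbors in its own copy plus the post edge to the opposite copy, so the diagonal entry is $\deg_G(v)+1$. This gives
\[
\tilde{L} \;=\; \begin{pmatrix} L+I & -I \\ -I & L+I \end{pmatrix}.
\]
Now consider the symmetric orthogonal involution
\[
U \;=\; \frac{1}{\sqrt{2}} \begin{pmatrix} I & I \\ I & -I \end{pmatrix}, \qquad U = U^T = U^{-1},
\]
whose columns are the symmetric and antisymmetric ``modes'' relative to the copy-swap. A direct multiplication shows
\[
U \tilde{L} U \;=\; \begin{pmatrix} L & 0 \\ 0 & L+2I \end{pmatrix}.
\]
This is the key computation and is essentially just the observation that $\tilde L\,(v,v)^T = (Lv,Lv)^T$ and $\tilde L\,(v,-v)^T = ((L+2I)v,-(L+2I)v)^T$.

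Next I would pseudoinvert blockwise. Since $G$ is connected we have $\ker L = \mathrm{span}\{\mathbf 1\}$, and $L+2I$ is positive definite so its pseudoinverse is its ordinary inverse. The pseudoinverse behaves well under orthogonal conjugation: if $M = U D U$ with $U$ orthogonal symmetric and $D$ block diagonal, then $M^\dagger = U D^\dagger U$. Applying this to $\tilde L$,
\[
\tilde{L}^\dagger \;=\; U \begin{pmatrix} L^\dagger & 0 \\ 0 & (L+2I)^{-1} \end{pmatrix} U.
\]
Multiplying this out reproduces exactly the claimed formula, with the symmetric part contributing the block of $L^\dagger$'s (scaled by $1/2$) and the antisymmetric part contributing the $\pm(L+2I)^{-1}$ block.

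The only subtle step is justifying the identity $M^\dagger = U D^\dagger U$ when $D$ is only a pseudoinverse on the degenerate block. I would handle this by directly checking that the matrix $\tilde{L}^\dagger$ produced by the formula satisfies the four Penrose conditions listed in \cref{sec:alg graph}: symmetry is immediate from the block structure, and $\tilde L\,\tilde L^\dagger\,\tilde L = \tilde L$ together with $\tilde L^\dagger\,\tilde L\,\tilde L^\dagger = \tilde L^\dagger$ reduce, after conjugation by $U$, to the block identities $L L^\dagger L = L$, $L^\dagger L L^\dagger = L^\dagger$, and the trivial identity $(L+2I)(L+2I)^{-1}(L+2I) = L+2I$. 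Uniqueness of the Moore--Penrose pseudoinverse then gives the claim. There is no serious obstacle here; the only thing to watch is that one uses property~\eqref{it: LLdag} of $L^\dagger$ at the right moment so the projection $I - J/n$ on the symmetric block matches the effective identity on the antisymmetric block when verifying the Penrose relations.
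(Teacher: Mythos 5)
Your proof is correct, and it takes a genuinely different route from the paper's. The paper does not diagonalize: it simply writes down the claimed matrix, multiplies it against $\tilde L$, and checks via the identity $LL^\dagger = I - J/n$ that the product is $I - \tfrac{1}{2n}J$; since $\tilde L \tilde L^\dagger = \tilde L^\dagger \tilde L = I - \tfrac{1}{2n}J$ characterizes the pseudoinverse of the Laplacian of a connected graph (a fact the paper records just before the proposition), this one computation suffices. You instead conjugate by the swap involution $U = \tfrac{1}{\sqrt2}\left(\begin{smallmatrix} I & I\\ I & -I\end{smallmatrix}\right)$ to get $U\tilde L U = \mathrm{diag}(L,\,L+2I)$, pseudoinvert blockwise, and transform back, justifying $(UDU)^\dagger = UD^\dagger U$ by checking the four Penrose conditions. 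Both arguments are sound. Yours has the advantage of explaining \emph{where} the formula comes from (the symmetric mode sees $L$, the antisymmetric mode sees $L+2I$), and the Penrose verification you invoke reduces cleanly to $LL^\dagger L = L$, $L^\dagger L L^\dagger = L^\dagger$ and symmetry of $LL^\dagger$ — you do not actually need the finer identity $LL^\dagger = I - J/n$ for this route, so your closing worry about matching $I-J/n$ against the antisymmetric block is unnecessary. The paper's verification is shorter but presupposes the answer; note that it, like yours, implicitly uses connectivity of $G$ (so that $\ker L = \mathrm{span}\{\mathbf 1\}$ and $L+2I$ is invertible), which is the standing assumption in that section.
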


\begin{proof}
First, note that
\[
\tilde L = \left(
\begin{array}{c|c}
L+I &-I\\
\hline
-I & L+I
\end{array} \right).
\]
Then one checks, using \cref{it: LLdag}, that
\[
\tilde{L} \tilde{L}^\dagger = 
\frac{1}{2}
\left(
\begin{array}{c|c}
  I - J/n   &  I - J/n\\
\hline
  I - J/n   &  I - J/n
\end{array} \right)
+ \frac{1}{2}
\left(
\begin{array}{c|c}
  I &  -I\\
\hline
  -I   &  I
\end{array} \right)
= I - \frac{1}{2n}J.
\]
A similar calculation works for $\tilde{L}^\dagger \tilde{L}$, completing the proof.
\end{proof}

The natural distance on a graph $G = (V, E)$ between two vertices $u$ and $v$ is the length of the shortest path between them. It is well-known that this is a metric. However, there is one more notion of distance on a graph coming from the theory of electrical networks. Suppose each edge in $G$ is replaced by a resistor of unit resistance. Then the \emph{resistance distance} between $u$ and $v$, denoted $R_G(u, v)$, is the effective resistance between them. It turns out that the resistance distance is also a metric (most importantly, it satisfies the triangle inequality) and can be expressed in terms of the pseudoinverse. 
More precisely~\cite{gutman-xiao-2004},
\begin{equation}
\label{resistance distance}
R_G(u, v) = L^{\dagger}_{u, u} + L^{\dagger}_{v, v} - 2L^{\dagger}_{u, v}.
\end{equation}
{If we think of $L^\dagger$ as a linear transformation from $\mathbb{R}^{|V|} \to \mathbb{R}^{|V|}$, and let $\langle \cdot, \cdot \rangle$ be the standard inner product on $\mathbb{R}^{|V|}$, then we can write
\begin{equation}
\label{res dist inner product}
R_G(u, v) = \langle L^\dagger (e_u - e_v), e_u - e_v \rangle.
\end{equation}
}
We remark that if we form the \emph{resistance matrix}, which we also call $R \equiv R_G$, of resistance distances, then we have the equalities~\cite{gutman-xiao-2004},
\[
L R L = -2 L, \quad
\text{and}
\quad
L^\dagger R L^\dagger = -2 (L^\dagger)^3.
\]

From the all minors matrix tree theorem (\cref{thm:all minors}), the determinant of $L_G(\{u, v\}^{c}, \allowbreak \{u, v\}^{c})$, where $u, v$ are vertices in a graph $G$, equals the number of forests with two components with $u, v$ lying in separate components. 
{Recall that $[\cdot]$ is the number of spanning trees.}

\begin{proposition}
\label{prop:pseudo}
Let $L$ be the Laplacian of the graph $G = (V, E)$ and $u, v \in V$.
The determinant of $L(\{u, v\}^{c}, \{u, v\}^{c})$ may be written in terms of the effective resistance $R_G(u, v)$ as
\[
\det L(\{u, v\}^{c}, \{u, v\}^{c}) = [\cdot] \,R_G(u, v).
\]
Thus, $R_{G}(u, v)$ is the number of spanning forests with two trees with $u$ and $v$ in different trees divided by the total number of spanning trees, which we write using the notation from the introduction as
\[
R_G(u, v) = \frac{[u \mid v]}{[\cdot]}.
\]
\end{proposition}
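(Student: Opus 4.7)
The plan is to derive the first identity from the resistance formula \eqref{resistance distance} via a standard Cramer's rule computation on the reduced Laplacian, and then deduce the second identity from \cref{thm:all minors}. The second identity is actually a restatement of the first: applying \cref{thm:all minors} with $S = T = \{u,v\}$, the determinant $\det L(\{u,v\}^c, \{u,v\}^c)$ equals $[u \mid v]$ up to sign. Since this is a principal minor of the positive semidefinite matrix $L$, it is nonnegative, so the sign ambiguity disappears, and dividing by $[\cdot]$ yields $R_G(u,v) = [u \mid v]/[\cdot]$. Thus everything reduces to showing $\det L(\{u,v\}^c, \{u,v\}^c) = [\cdot]\, R_G(u,v)$.

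For this, I would first argue that $R_G(u,v) = x_u - x_v$ for any $x$ solving $Lx = e_u - e_v$. Connectedness of $G$ gives $\ker L = \operatorname{span}(\mathbf{1})$, and since $e_u - e_v \perp \mathbf{1}$, solutions exist; any two differ by a multiple of $\mathbf{1}$, so the quantity $x_u - x_v$ is well-defined. The identity $L^\dagger L = I - J/n$ together with $J(e_u - e_v) = 0$ shows that $L^\dagger(e_u - e_v)$ is one such solution, so \eqref{res dist inner product} rewrites as $R_G(u,v) = x_u - x_v$ for any solution $x$.

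Finally, I would compute $x_u - x_v$ directly. Normalize so that $x_v = 0$; restricting the system $Lx = e_u - e_v$ to the rows indexed by $V \setminus \{v\}$ becomes $\bar L \bar x = \bar e_u$, where $\bar L := L(\{v\}^c, \{v\}^c)$ and the bar denotes restriction to $V \setminus \{v\}$. By the matrix tree theorem, $\det \bar L = [\cdot] > 0$, so $\bar L$ is invertible, and Cramer's rule gives
\[
x_u = (\bar L^{-1})_{uu} = \frac{\det L(\{u,v\}^c, \{u,v\}^c)}{[\cdot]},
\]
the numerator being the cofactor of the $(u,u)$ entry of $\bar L$, whose sign is trivially positive since expanding the modified determinant along the replaced column picks up a single diagonal $1$. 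Combined with $R_G(u,v) = x_u - x_v = x_u$ this yields the first identity. The main subtlety — not really an obstacle — is sign bookkeeping in Cramer's rule and in \cref{thm:all minors}; both are resolved by the fact that the submatrices in question are principal minors of the positive semidefinite Laplacian and therefore have nonnegative determinants matching the stated combinatorial counts.
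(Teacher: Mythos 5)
Your proof is correct and follows essentially the same route as the paper: reduce to the Laplacian with one of $u,v$ deleted, identify the relevant diagonal entry of its inverse as $\det L(\{u,v\}^c,\{u,v\}^c)/[\cdot]$ via the cofactor formula, and invoke the (all minors) matrix tree theorem for the combinatorial interpretation. The only difference is that where the paper cites a reference for the identification of that diagonal entry with $R_G(u,v)$, you derive it directly from \eqref{res dist inner product} using $L^\dagger L = I - J/n$ and the potential $x$ solving $Lx = e_u - e_v$, which makes the argument self-contained.
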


\begin{proof}
Let $L_0$ be the reduced Laplacian obtained by removing the row and column corresponding to the vertex $u$ in $G$. It follows from elementary linear algebra that
\[
(L_0^{-1})_{v,v} = \frac{1}{\det L_0} \det L(\{u, v\}^{c}, \{u, v\}^{c}).
\]
Now, the denominator is $\det L_0 = [\cdot]$. Therefore, it remains to show that 
$(L_0^{-1})_{v,v}$ is $R(u, v)$. This is a standard calculation that is found in \cite[Section 2.4]{ghosh2008minimizing}, for example.
\end{proof}

We will also need the following lemma in \cref{sec:corr}. 

\begin{lemma}
\label{lem:offdiag}
Let $a, b, c, d$ be vertices in a graph $G$ with Laplacial $L$.
With the same notation as above,
\begin{equation*}
\langle L^{\dagger} (e_a - e_b), e_c - e_d\rangle = \dfrac{[ac\mid bd]-[ad\mid bc]}{[\cdot ]}.
\end{equation*}
\end{lemma}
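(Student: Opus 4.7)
The plan is to reduce to the diagonal case already handled in \cref{prop:pseudo} by a standard polarization identity, and then finish with a short combinatorial bookkeeping argument on spanning two-forests.

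First, writing $Q(x) := \langle L^\dagger x, x\rangle$ for the quadratic form associated to $L^\dagger$, a direct expansion shows
\[
2\langle L^\dagger(e_a - e_b), e_c - e_d\rangle = Q(e_a - e_d) + Q(e_b - e_c) - Q(e_a - e_c) - Q(e_b - e_d);
\]
indeed, all diagonal entries $L^\dagger_{aa}, L^\dagger_{bb}, L^\dagger_{cc}, L^\dagger_{dd}$ cancel on the right-hand side, while the off-diagonal contributions combine to $2(L^\dagger_{ac} - L^\dagger_{ad} - L^\dagger_{bc} + L^\dagger_{bd})$, which is exactly the left-hand side. By \eqref{res dist inner product}, $Q(e_u - e_v) = R_G(u,v)$, and \cref{prop:pseudo} then rewrites each such resistance distance as a normalized spanning-forest count, giving
\[
2\langle L^\dagger(e_a - e_b), e_c - e_d\rangle = \frac{[a\mid d]+[b\mid c]-[a\mid c]-[b\mid d]}{[\cdot]}.
\]

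It then suffices to verify the enumerative identity
\[
[a\mid d]+[b\mid c]-[a\mid c]-[b\mid d] \;=\; 2\bigl([ac\mid bd]-[ad\mid bc]\bigr).
\]
I would obtain this by refining each two-forest count on the left according to the partition it induces on $\{a,b,c,d\}$. Writing $N(X\mid Y)$ for the number of spanning two-forests whose restriction to $\{a,b,c,d\}$ is $X\mid Y$, each $[x\mid y]$ expands into the sum of exactly four such refined counts, one for each way to place the two vertices of $\{a,b,c,d\}\setminus\{x,y\}$ into the two trees. When the four signed expansions are combined, the ``unbalanced'' refinements $N(a\mid bcd), N(b\mid acd), N(abc\mid d), N(abd\mid c)$ as well as the common contribution $2N(ab\mid cd)$ all cancel, leaving precisely $2N(ac\mid bd) - 2N(ad\mid bc) = 2([ac\mid bd]-[ad\mid bc])$, as desired.

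I do not anticipate any real obstacle; the only care required is the last step, where one must consistently enumerate the eight refined cases and check that the cancellations occur as claimed. An alternative route would be to apply the all-minors matrix tree theorem (\cref{thm:all minors}) directly to a reduced Laplacian $L_0$ and use Cramer's rule for each $L^\dagger_{xy}$ (noting that $(e_a - e_b)^T J = 0$ kills the rank-one corrections relating $L^\dagger$ to $L_0^{-1}$), but the polarization approach is cleaner because it lets us quote \cref{prop:pseudo} as a black box.
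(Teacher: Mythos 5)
Your proof is correct and follows essentially the same route as the paper's: polarization of the quadratic form $\langle L^\dagger x, x\rangle$, conversion to two-forest counts via \cref{prop:pseudo}, and a refinement of the counts $[x\mid y]$ according to the partition induced on $\{a,b,c,d\}$. The only cosmetic difference is that the paper carries out the refinement in two successive steps (first locating $c$ or $d$, then $a$ or $b$) rather than expanding each count into four refined terms at once; the cancellations are identical.
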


We have been unable to find a reference for this and we include a proof.

\begin{proof}
Since $L^\dagger$ is symmetric, we can use the polarization identity from linear algebra to obtain
\begin{align*}
2\langle L^{\dagger} (e_a - e_b), e_c - e_d\rangle =& \langle L^{\dagger} (e_a - e_d), e_a - e_d\rangle -\langle L^{\dagger} (e_a - e_c), e_a - e_c\rangle\\
&+ \langle L^{\dagger} (e_b - e_c), e_b - e_c\rangle-\langle L^{\dagger} (e_b - e_d), e_b - e_d\rangle.
\end{align*}
Now, using \eqref{res dist inner product} and \cref{prop:pseudo} on each term on the right hand side, 
we have
\begin{align*}
2\langle L^{\dagger} (e_a - e_b), e_c - e_d\rangle 
=& \dfrac{[a\mid d]-[a\mid c]+[b\mid c]-[b\mid d]}{[\cdot]}.
\end{align*}
Considering the block in which $c$ belongs in $[a \mid d]$, and the block in which $d$ belongs in $[a \mid c]$, we get
\[
[a \mid d] = [ac \mid d] + [a \mid cd]  \quad
\text{and} \quad
[a \mid c] = [ad \mid c] + [a \mid cd] ,
\]
and therefore,
\[
[a \mid d] - [a \mid c] = [ac \mid d] - [ad \mid c].
\]
A similar simplification holds for $[b\mid c]-[b\mid d]$. 
Plugging these computations, we obtain
\begin{align*}
2\langle L^{\dagger} (e_a - e_b), e_c - e_d\rangle 
=& \dfrac{[ac\mid d] - [bc \mid d]+ [bd\mid c] - [ad\mid c]}{[\cdot]}.
\end{align*}
Repeating the same kind of calculation for $[ac\mid d] - [bc \mid d]$ and 
$[bd\mid c] - [ad\mid c]$, we finally get
\[
2\langle L^{\dagger} (e_a - e_b), e_c - e_d\rangle = 2\,\,\dfrac{[ac\mid bd]-[ad\mid bc]}{[\cdot]},
\]
completing the proof.
\end{proof}

\subsection{Alternate bunkbed model}
\label{sec:alt bunkbed}

A few years ago, the second author~\cite{linusson_2011,linusson_2019} investigated the bunkbed conjecture for outerplanar graphs.\footnote{Although the paper is withdrawn, some constructions in the paper continue to be useful.}
Recall that a graph is \emph{outerplanar} if it can be drawn in the plane with non-crossing edges and with all the vertices lying on a circle. This is a minor-closed family of graphs and \cite{linusson_2011} studied what a minimal counterexample (minimal in terms of the number of edges) ought to look like. 

Key to this analysis are two new models, one a generalization of the bunkbed conjecture and the other a reduction. The first refines which vertical edges appear in the bunkbed graph. Given a subset $T \subset V(G)$ which we call \emph{posts}, we look at the graph $\tG_T$, which is a subgraph of the regular bunkbed graph $\tG$, where the vertical edges corresponding to vertices in $T$ are present and the other vertical edges are absent. One then performs percolation with parameter $p$ on the two sets of horizontal edges in $\tG_T$. 
The vertical edges at the posts are always open.
One {has the following question:}

\begin{problem}
\label{ques:postsRC}
Let $G$ be a graph, $T \subset V$ a set of posts, $u,v\in V$ and $\tG_T$ the associated bunkbed graph. 
Let $\rc{\tG_T}$
be the random cluster measure on $\tG_T$ with parameters $p, q$, conditioned on the vertical edges corresponding to the posts being present. Then for 
{which $G,T$} and {which values of $p, q$} is it true that 
\begin{align} 
\label{bunkbed RC posts}
\rc{\tG_T}(u_1 \leftrightarrow v_1) \geq 
\rc{\tG_T}(u_1 \leftrightarrow v_2)?
\end{align}
\end{problem}

{Setting $q = 1$, we get a refinement of \cref{thm:bunkbed false} for percolation as well. The counterexample given in \cite{gladkov_pak_zimin_2024} was in fact for a specific $\tG_T$ with $|T|=3$. 
It is easy to see (using a simple conditioning argument) that if for a given $G$,
\cref{ques:postsRC} is true for all $T$, then 
\cref{prob:origRC} is true as well for that $G$. }

In \cite{linusson_2011} was also presented a different model for the bunkbed conjecture, which we call 
the \emph{alternate bunkbed conjecture on $(G, T)$},
where $G$ is the graph and $T$ the subset of posts as above. 
{Let $\altperc{G}T$ be the uniform measure on all red-blue colourings of edges.
An \textit{admissible path} between vertices in $G$ is a path that only change colours at the posts. 
We now consider admissible paths in $G$ with respect to $\altperc{G}T$.
We use the notation $u \leftrightarrow_{RR} v$ (resp. $u \leftrightarrow_{RB} v$) to mean 
that there is 
an admissible path 
from $u$ to $v$ that starts with a red edge and ends with a red edge (resp. a blue edge).}

{We know that the bunkbed conjecture is false and therefore also the alternate version. But we can ask whether the alternate bunkbed inequality holds for some families of graphs.}

\begin{problem}
\label{ques:alt bunkbed}
Let $G$ be a graph,  $T \subset V$ a set of posts and $u, v$ vertices in $G$. 
Then for what $G$ does the inequality
\begin{align} 
\label{alt bunkbed}
\altperc{G}T(u \leftrightarrow_{RR} v) \geq \altperc{G}T(u \leftrightarrow_{RB} v),
\end{align}
hold?
\end{problem}

We may similarly look at the alternate model for the random cluster measure. 
In this model, given a red-blue colouring, the number of connected components is computed using admissible paths. 
Here, we focus on the arboreal gas weak limit, which we denote by $\altforest{G}{T}$, where $G$ is a graph and $T$ is a set of posts. 
In this measure, there is a restriction on the red-blue colourings with respect to cycles, namely that any colouring of any cycle must not be admissible.
This is because otherwise it would correspond to a cycle in $\tG_T$, which we do not have in the arboreal gas measure.
We have the following conjecture.

\begin{conjecture}
\label{conj:alt forest}
Let $G$ be a graph,  $T \subset V$ a set of posts and $u, v$ vertices. 
Then
\begin{align} 
\label{alt bunkbed forests}
\altforest{G}{T}(u \leftrightarrow_{RR} v) \geq \altforest{G}{T}(u \leftrightarrow_{RB} v).
\end{align}
\end{conjecture}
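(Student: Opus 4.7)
The plan is to realise \cref{conj:alt forest} as an arboreal gas bunkbed statement on $\tG_T$ and then attempt a weight-preserving combinatorial swap.

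First I would set up the correspondence between admissible colourings of $G$ and subforests of $\tG_T$. A colouring of $E(G)$ lifts to an edge configuration in $\tG_T$ by sending each red edge to its bottom copy and each blue edge to its top copy, together with every vertical post edge. The ``no admissible cycle'' condition on the colouring is exactly the acyclicity of the lift in $\tG_T$, so $\altforest{G}{T}$ pushes forward to the arboreal gas measure on $\tG_T$ conditioned on all posts being open, up to a common weight factor. Under this correspondence the events $u\leftrightarrow_{RR} v$ and $u\leftrightarrow_{RB} v$ translate into $u_1\leftrightarrow v_1$ and $u_1\leftrightarrow v_2$, so \eqref{alt bunkbed forests} becomes the conditional inequality
\[
\forest{\tG_T}(u_1 \leftrightarrow v_1 \mid \text{all posts open}) \geq \forest{\tG_T}(u_1 \leftrightarrow v_2 \mid \text{all posts open}),
\]
which is a restricted form of \cref{conj:forest}.

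With this reformulation, the natural strategy is a weight-preserving injection from forests witnessing $u_1\leftrightarrow v_2$ into forests witnessing $u_1\leftrightarrow v_1$. Given such a forest $F$, let $\pi$ be its unique $u_1$-to-$v_2$ path; since $\pi$ begins on the bottom copy and ends on the top copy it must cross at least one post edge. Let $w$ be the last post crossed by $\pi$, so that the portion of $\pi$ from $w_2$ to $v_2$ lies entirely on the top copy. The candidate involution reflects this top-copy tail to the bottom copy, producing a forest-candidate in which $v_1$ replaces $v_2$ as the endpoint connected to $u_1$. When no edge of the reflected tail already appears in the bottom copy of $F$, the result is a forest with the same number of non-post edges, and we obtain the desired injection.

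The main obstacle is the possibility of collisions: a bottom-copy edge of $F$ may coincide with an edge of the reflected tail, creating a cycle. Resolving this seems to require a ``ping-pong'' recursion in which conflicting bottom-copy edges are themselves reflected upward, and so on; proving termination and well-definedness of such a scheme, and checking that the aggregate map is an injection, is the heart of the difficulty. A complementary line of attack is induction on $|T|$: condition on a distinguished post $w$ and on the components of $w_1$ and $w_2$ in the forest, reducing the claim to a signed sum of smaller bunkbed inequalities. The base cases $|T|\le 2$ for outerplanar graphs are proved in \cref{sec:outerplanar}, and isolating a uniform inductive step beyond that setting is where I expect the real technical work to lie.
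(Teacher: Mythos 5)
This statement is presented in the paper as an open \emph{conjecture}, not a theorem: the paper proves \eqref{alt bunkbed forests} only for outerplanar graphs with at most two posts (\cref{thm:onepostcorrelation,thm:twopostcorrelation}) and offers no general proof. Your proposal does not close that gap either, and by your own admission it is a plan rather than a proof. The reflection map you sketch (flip the top-sheet tail of the $u_1$--$v_2$ path after its last post) is the standard ``mirroring'' idea, and the collision problem you identify --- a reflected edge coinciding with an existing bottom-sheet edge, creating a cycle --- is exactly where this approach has always broken down; you neither define the ``ping-pong'' resolution nor prove it terminates or is injective. Since the unconditional bunkbed conjecture is now false for percolation (\cref{thm:bunkbed false}), one cannot take for granted that such a weight-preserving injection exists at all, so the heart of the argument is missing, not merely deferred.

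There is also a substantive error in your reformulation. Lifting red edges to the bottom sheet and blue edges to the top sheet produces only those spanning forests of $\tG_T$ that contain \emph{exactly one} of the two copies of each horizontal edge (together with all post edges). Hence $\altforest{G}{T}$ pushes forward to the arboreal gas measure on $\tG_T$ conditioned on that event as well, not merely on the posts being open; this is precisely why the paper says \cref{conj:alt forest} \emph{implies} \cref{conj:forest} (by conditioning on the tripartition of doubled/single/absent edge pairs, as in the proof of \cref{thm:p1}) rather than being a restricted form of it. The conditional inequality you wrote down,
\[
\forest{\tG_T}(u_1 \leftrightarrow v_1 \mid \text{all posts open}) \geq \forest{\tG_T}(u_1 \leftrightarrow v_2 \mid \text{all posts open}),
\]
is therefore a different statement from \eqref{alt bunkbed forests}, and proving it would not establish the conjecture. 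Your secondary suggestion, induction on $|T|$, likewise stops at naming the base cases from \cref{sec:outerplanar} without supplying an inductive step.
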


If true, this would imply \cref{conj:forest}.

The proof strategy in \cite{linusson_2011} was to show that a minimal counterexample does not exist. In that context the following result was proved in the context of percolation\footnote{The error in \cite{linusson_2011} was in another argument.}
However, the reductions (which use assorted deletions and contractions), work well for all random cluster measures, and in particular, for the arboreal gas measure as well.

\begin{theorem}[\cite{linusson_2011}]
\label{thm:linusson}
The following are equivalent. 
\begin{enumerate}
\item There is an outerplanar graph $G$, a vertex subset $T$ and vertices $u, v$ such that the alternate bunkbed conjecture for the arboreal gas measure, \cref{conj:alt forest}, is false. 

\item There is an outerplanar graph $G$, a vertex subset $T$, and vertices $u, v$ such that the following hold:
\begin{enumerate}
   \item $(G, T)$ is a minimal (in the number of edges) counterexample to \cref{conj:alt forest}. 
    \item There are no non-post vertices of degree $2$.
    \item There are no adjacent post vertices. 
    \item There are two non-intersecting boundary paths from $u$ to $v$.
   \item Every chord separates $u$ and $v$. In particular, $u$ and $v$ have degree $2$. 
    \item Neither $u$ nor $v$ are in $T$.
    \item The vertices in $T$ do not form a cutset. 
\end{enumerate}
\end{enumerate}
\end{theorem}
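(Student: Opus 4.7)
The plan is to prove the non-trivial implication (1)$\Rightarrow$(2), since the converse is tautological: (2) explicitly exhibits a counterexample to \cref{conj:alt forest}. Starting from any outerplanar $(G, T, u, v)$ violating the conjecture, pick one with the minimum number of edges; this is property (a). Properties (b)--(g) are then derived by contradiction: whenever one of them fails, one produces a strictly smaller outerplanar counterexample, contradicting minimality. Throughout, we reuse the reductions designed in \cite{linusson_2011} for the percolation version, checking only that they remain valid under the arboreal gas weighting.

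The local properties (b), (c) and (f) follow from contraction and symmetry. If a non-post vertex $w$ has degree $2$ with neighbours $x, y$, then any admissible path through $w$ uses both incident edges in the same colour, since $w$ forbids colour changes; hence $\{wx, wy\}$ behaves as a single series edge, and one suppresses $w$ and replaces the two edges by a single edge $xy$ with an adjusted weight parameter. The RR and RB connection probabilities between $u$ and $v$ change by the same multiplicative factor, so a counterexample persists. Two adjacent posts are handled similarly by contracting the connecting edge (both endpoints already support colour changes, so the edge factorises out of the partition function). For (f), if $u \in T$, the first edge of any admissible path out of $u$ is free to have either colour, and the involution that globally flips the colour on the component containing $u$ exchanges RR and RB events, forcing equality and hence ruling out any counterexample with $u \in T$.

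The global properties (d), (e) and (g) rest on block decompositions that exploit outerplanarity. A minimal counterexample must be $2$-connected, since otherwise the biconnected block containing a $u$-$v$ path is itself a smaller counterexample (the other blocks contribute a common multiplicative constant to both RR and RB). In a $2$-connected outerplanar graph the outer face is a Hamiltonian cycle, and this yields two vertex-disjoint boundary paths between $u$ and $v$, giving (d). For (e), any chord that does not separate $u$ and $v$ bounds a region disjoint from every $u$-$v$ path, and the subgraph inside this region can be removed (or contracted onto its attaching vertices) without affecting the RR vs RB comparison, yielding a smaller counterexample. For (g), if $T$ is a vertex cutset separating $u$ from $v$, then every admissible $u$-$v$ path must pass through $T$; conditioning on the colour assignment and connectivity pattern at $T$ factorises both probabilities across the cut into smaller admissible sub-instances, one of which must already be a counterexample.

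The main technical obstacle is that every one of the above reductions must be reverified for the arboreal gas weighting $\lambda^{|E(F)|}$ (and its alternate, red/blue version), rather than for the independent Bernoulli edges used in \cite{linusson_2011}. Unlike percolation, the arboreal gas couples edges through the no-cycle constraint, so deleting or contracting an edge redistributes weight across configurations in a non-product manner. The series reductions of (b), (c) and the cut-vertex decompositions underlying (d), (e), (g) must therefore be recast as identities between partition functions of the larger and smaller graphs, with an appropriate rescaling of $\lambda$ for series and parallel edges, analogous to the standard spanning-forest reductions on electrical networks. Once these partition function identities are in place, the combinatorial reductions of \cite{linusson_2011} carry over unchanged, which is why the list of structural properties in (2) is identical to the list obtained for percolation.
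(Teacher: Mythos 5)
Your overall strategy---take an edge-minimal outerplanar counterexample and rule out each structural feature in (b)--(g) by producing a strictly smaller counterexample, reusing the reductions of \cite{linusson_2011}---is the same as the paper's. The problem is the mechanism you give for the local reductions (b) and (c), which is the one step the paper actually proves in detail, and as you state it it does not work. The alternate arboreal gas measure is the \emph{uniform} measure on red--blue colourings with no admissible cycle; there is no edge-weight parameter available to ``adjust''. A non-post degree-$2$ vertex $w$ with neighbours $x,y$ does not reduce to a single weighted edge $xy$: of the four colourings of $\{wx,wy\}$, the two monochromatic ones behave like a single red or blue edge, but the two bichromatic ones make $w$ unusable, i.e.\ behave like \emph{no} edge. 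A single always-present two-coloured edge cannot simulate this three-state behaviour (this is already true in the percolation version of the alternate model), and your claim that the RR and RB probabilities ``change by the same multiplicative factor'' is unjustified and false in general. The correct argument is a conditioning one: condition on whether the two edges at $w$ have the same colour; in one case the instance reduces to $G$ with $w$ deleted, in the other to $G$ with one incident edge contracted; both are legitimate unweighted instances of \cref{conj:alt forest} with fewer edges, and both sides of \eqref{alt bunkbed forests} are the \emph{same convex combination} of the corresponding quantities in the two reduced instances, so a counterexample forces a smaller one. One must also verify---and this is the only forest-specific point---that the deletion and contraction neither create nor destroy admissible cycles, so the reduced instances genuinely carry the alternate arboreal gas measure.

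Your closing paragraph repeats the same misstep globally: recasting the reductions as partition-function identities ``with an appropriate rescaling of $\lambda$ for series and parallel edges'' would land you in a weighted model that is not an instance of \cref{conj:alt forest}, so minimality would not be contradicted; and in any case you defer those identities rather than establish them, which is precisely the content that has to be checked when passing from percolation to forests (the no-admissible-cycle constraint couples a deleted or contracted region to the rest of the graph, e.g.\ in your treatment of (e) and the block decompositions for (d) and (g)). The point of the theorem is that no reweighting is needed because each reduction is a conditioning that writes both sides of the inequality as one common nonnegative linear combination over smaller unweighted instances. The remaining items are deferred to \cite{linusson_2011} in the paper as well, so the brevity there is not the issue; but item (b) as you have argued it does not go through.
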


\begin{proof}
{The proof is the same as in \cite{linusson_2011}, so we just explain (b) and let the interested reader look up the remaining statements there. Assume $x$ is a vertex in $G$ with degree 2 and $x\notin T$. Then we can condition on the two possibilities; either the two edges at $x$ have different colours or they have the same colour. The first case is equivalent to removing $x$, since $x$ cannot be used. The second to contracting one of the edges incident to $x$; this could never create or remove a cycle and is thus true also for the arboreal gas measure. Since both sides in the inequality \eqref{alt bunkbed forests} are the same linear combination of the corresponding probabilities in these two cases, $G,T$ cannot be a minimal counterexample. This proves (b).}
\end{proof}

We will use this alternate bunkbed model for forests in \cref{sec:outerplanar}
to prove new results for outerplanar graphs. 

\subsection{Basics of correlation inequalities}

Correlation inequalities are a key tool in statistical physics to study the large scale behaviour of spin and other models and constitute an important subarea of their own. We will only be concerned with the most basic of these inequalities, the Harris--Kleitman inequality and its consequences. Recall that an event (subset) $X \subset 
\{0, 1\}^n$ is said to be \emph{increasing} if it is upward closed. 

\begin{theorem}[Harris--Kleitman inequality]
\label{thm:harris ineq}
Let $\mu$ be a product measure on $\{0, 1\}^n$ and let $X, Y$ increasing events. Then 
\[
\mu(X \cap Y) \geq \mu(X)\, \mu(Y).
\]
In other words, any two increasing events are positive correlated. 
\end{theorem}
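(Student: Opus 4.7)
The plan is to prove the Harris--Kleitman inequality by induction on the dimension $n$, which is the standard approach for this result.

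For the base case $n=1$, let $\mu$ be the Bernoulli$(p)$ measure on $\{0,1\}$. The increasing events are $\emptyset$, $\{1\}$, and $\{0,1\}$; the inequality is trivial except when $X=Y=\{1\}$, in which case it becomes $p \geq p^2$.

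For the inductive step, assume the claim holds on $\{0,1\}^{n-1}$, and write $\mu = \mu' \otimes \mu_n$, where $\mu'$ is the product measure on the first $n-1$ coordinates and $\mu_n$ is the marginal on the $n$-th coordinate. For $\omega_n \in \{0,1\}$, define the slices
\[
X_{\omega_n} = \{\omega' \in \{0,1\}^{n-1} : (\omega', \omega_n) \in X\},
\]
and similarly $Y_{\omega_n}$. Because $X$ is increasing in $\{0,1\}^n$, each slice $X_{\omega_n}$ is increasing in $\{0,1\}^{n-1}$, and moreover $X_0 \subseteq X_1$ (flipping the last coordinate up cannot leave the event). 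The same applies to $Y$. By the inductive hypothesis applied to each slice, $\mu'(X_{\omega_n} \cap Y_{\omega_n}) \geq \mu'(X_{\omega_n})\mu'(Y_{\omega_n})$ for every $\omega_n \in \{0,1\}$. Conditioning on the last coordinate therefore gives
\[
\mu(X \cap Y) = \sum_{\omega_n} \mu_n(\omega_n)\,\mu'(X_{\omega_n} \cap Y_{\omega_n}) \geq \sum_{\omega_n} \mu_n(\omega_n)\,\mu'(X_{\omega_n})\,\mu'(Y_{\omega_n}).
\]

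To close the argument, set $f(\omega_n) := \mu'(X_{\omega_n})$ and $g(\omega_n) := \mu'(Y_{\omega_n})$. Both are non-decreasing functions of $\omega_n$ because $X_0 \subseteq X_1$ and $Y_0 \subseteq Y_1$. It remains to show the one-dimensional Chebyshev-type inequality
\[
\mathbb{E}_{\mu_n}[fg] \geq \mathbb{E}_{\mu_n}[f]\,\mathbb{E}_{\mu_n}[g],
\]
which follows from the elementary identity $(f(1)-f(0))(g(1)-g(0)) \geq 0$ after expanding both sides in terms of $p = \mu_n(1)$. Combining this with the displayed inequality above and observing that $\mathbb{E}_{\mu_n}[f] = \mu(X)$ and $\mathbb{E}_{\mu_n}[g] = \mu(Y)$ yields $\mu(X \cap Y) \geq \mu(X)\mu(Y)$, completing the induction.

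There is no real obstacle here; the proof is a routine induction whose only substantive input is the trivial one-variable rearrangement inequality at the end. The only thing to be careful about is verifying that the slices of an increasing event are themselves increasing and nested, which follows immediately from the definition of an upward-closed set.
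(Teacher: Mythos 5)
Your proof is correct and complete: the induction on $n$, the observation that the slices $X_0 \subseteq X_1$ of an increasing event are increasing and nested, and the closing one-dimensional Chebyshev inequality $(f(1)-f(0))(g(1)-g(0)) \geq 0$ are exactly the ingredients of the standard argument for Harris's inequality. The paper states this classical result without proof, so there is nothing to compare against; your argument is the canonical textbook one and would serve as a valid proof of the theorem as stated.
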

The well known \textit{FKG inequality} implies that increasing events are also positively correlated with respect to random cluster measures $\rc{G}$
for $q \geq 1$.
This means in particular that with respect to a percolation measure or more generally a random cluster measure with $q \geq 1$,
any two connectivity events $a \leftrightarrow b$ and $b \leftrightarrow c$ are positively correlated, where $a, b, c,$ are vertices. 

In recent work, Gladkov proved a beautiful strengthening of this. 

\begin{theorem}[{\cite[Theorem 3.2]{gladkov_2024}}]
\label{thm:gladkov}
Let $q \geq 1$ and let $\mu \equiv \mu^{RC}_{G, p, q}$ be a random cluster measure on a graph $G$ and let $a, b, c$ be vertices. Then 
\[
\mu(abc)\, \mu(a, b, c) \geq e_2(\mu(ab, c), \mu(ac, b), \mu(bc, a)),
\]
where $e_2(x, y, z) = xy + xz + yz$ is the elementary symmetric polynomial of degree $2$.
\end{theorem}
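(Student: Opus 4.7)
The plan is to prove Gladkov's inequality as a consequence of the FKG lattice condition of the random cluster measure for $q \geq 1$. This condition, which follows from the supermodularity of the cluster count $\kappa$, states that $\mu(\omega_1)\mu(\omega_2) \leq \mu(\omega_1 \vee \omega_2)\mu(\omega_1 \wedge \omega_2)$ for any two configurations $\omega_1, \omega_2 \in \{0,1\}^E$, where $\vee$ and $\wedge$ denote set union and intersection. Throughout, let $\pi(\omega)$ denote the partition of $\{a, b, c\}$ induced by the clusters of $\omega$, taking one of the five values $abc$, $ab\mid c$, $ac\mid b$, $bc\mid a$, or $(a, b, c)$.

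The key combinatorial observation is that if $\pi(\omega_1)$ and $\pi(\omega_2)$ are two distinct \emph{middle} partitions from the set $\{ab\mid c, ac\mid b, bc\mid a\}$, then $\pi(\omega_1 \vee \omega_2) = abc$ and $\pi(\omega_1 \wedge \omega_2) = (a, b, c)$. Indeed, the two middle partitions together realize two distinct pairwise connections among $\{a, b, c\}$, so the union connects all three; and no pairwise connection holds in both $\omega_i$, so none survives in the intersection. Hence the map $\Phi: (\omega_1, \omega_2) \mapsto (\omega_1 \vee \omega_2, \omega_1 \wedge \omega_2)$ sends any of the six ordered \emph{mixed} pairs (with partitions in the middle layer and $\pi(\omega_1) \neq \pi(\omega_2)$) to a target pair with partition pattern $(abc, (a, b, c))$.

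Summing the FKG lattice inequality over all mixed $(\omega_1, \omega_2)$ and grouping by the target $(\omega_1', \omega_2')$ gives
\[
2\,e_2\bigl(\mu(ab, c), \mu(ac, b), \mu(bc, a)\bigr) \leq \sum_{(\omega_1', \omega_2'):\, \pi = (abc,\, (a,b,c))} N(\omega_1', \omega_2')\,\mu(\omega_1')\mu(\omega_2'),
\]
where $N(\omega_1', \omega_2')$ is the number of ordered mixed preimages of $(\omega_1', \omega_2')$ under $\Phi$. To obtain the sharp inequality $2 e_2 \leq 2 \mu(abc)\mu(a, b, c)$, one needs an effective bound $N \leq 2$. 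A direct pairwise application of the Ahlswede--Daykin inequality yields only the weaker bound $e_2 \leq 3\,\mu(abc)\mu(a, b, c)$, off by a factor of three, and the main obstacle is to recover the correct factor. The plan is to exploit the $q^\kappa$ weight via the FKG gap: for each mixed preimage the gap carries a factor $q^\delta$ with defect $\delta = \kappa(\omega_1 \vee \omega_2) + \kappa(\omega_1 \wedge \omega_2) - \kappa(\omega_1) - \kappa(\omega_2) \geq 0$, and a cycle analysis of the graph $\omega_1' \setminus \omega_2'$ should show that this defect compensates for the overcounting of mixed preimages uniformly in $q \geq 1$. Making this compensation precise, in particular handling the degenerate $q = 1$ boundary where no $q$-slack is available, is the heart of the argument.
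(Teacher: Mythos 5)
The paper does not prove this statement: it is imported verbatim as \cite[Theorem 3.2]{gladkov_2024}, so there is no in-paper proof to compare against. Judged on its own, your proposal is an honest but incomplete sketch with a genuine gap at its core.

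What you have correctly established is the combinatorial observation (if $\pi(\omega_1)$ and $\pi(\omega_2)$ are two \emph{distinct} middle partitions, then $\pi(\omega_1\vee\omega_2)=abc$ and $\pi(\omega_1\wedge\omega_2)=(a,b,c)$, since connectivity in $\omega_1\wedge\omega_2$ implies connectivity in each $\omega_i$) and, combined with the FKG lattice condition via the Ahlswede--Daykin four-function theorem, this does yield $\mu(ab,c)\,\mu(ac,b)\leq \mu(abc)\,\mu(a,b,c)$ for each of the three pairs, hence $e_2 \leq 3\,\mu(abc)\,\mu(a,b,c)$. You acknowledge this yourself. The gap is precisely the factor of $3$: your regrouped sum has preimage multiplicities $N(\omega_1',\omega_2')$ that can be as large as (a constant times) $2^{|\omega_1'\setminus\omega_2'|}$, since every edge of $\omega_1'\setminus\omega_2'$ can be distributed to either coordinate, and nothing you say controls how many of these distributions land in the mixed layer. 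Worse, the proposed repair mechanism --- absorbing the overcount into the defect $q^{\delta}$ --- is structurally a dead end, not merely unfinished: at $q=1$ the lattice condition holds with \emph{equality} for every pair ($\mu$ is a product measure), so $\delta\equiv 0$ and there is literally zero slack to absorb any multiplicity, yet $q=1$ (percolation) is the main and hardest case of Gladkov's theorem. So the "heart of the argument" that you defer is not a technical refinement of your setup; it requires a different idea (Gladkov's proof works with pairs of configurations and a measure-preserving switching/involution argument rather than a naive resummation of the lattice condition). As it stands, the proposal proves only the weaker constant-$3$ inequality.
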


Proving correlation inequalities for random cluster measures in the regime $q < 1$ is a challenging task; see \cite{pemantle2000towards} for an extended discussion on why this is the case (and why nevertheless, it is worth seeking for). We believe that \cref{thm:gladkov} holds for $q < 1$ random cluster measures as well. We take a small step towards this in this paper. We postpone a discussion to \cref{sec:corr}, but recall here an important result that we will use. 

An important inequality for the spanning tree measure is the well known \emph{Rayleigh monotonicity principle}. Recall the spanning tree measure $\ust{G}$ defined earlier. 
We let $\ust{G}(e)$ 
denote the probability that an edge $e$ belongs to a uniformly chosen spanning tree.

\begin{theorem}[{\cite[Equation (3)]{choe-2008}, \cite[Section 1.4.1]{doylesnell}}]
\label{thm:rayleigh}
Let $G = (V, E)$ be a graph and $e, f$ edges in $G$. By $G \setminus f$, we mean the graph $(V, E \setminus \{f\})$, which we assume to be connected. Then we have
\[
\ust{G}(e) \leq \ust{G \setminus f}(e).
\]
More generally, for any two vertices $a, b$ in $G$, we have
\[
\frac{[a\mid b]^G}{[\cdot]^G} \leq \frac{[a\mid b]^{G \setminus f}}{{[\cdot]^{G \setminus f}}}.
\]
Note that this implies that for any connected vertex subgraph $H$ of $G$, we have 
\[
\frac{[a\mid b]^G}{[\cdot]^G} \leq \frac{[a\mid b]^{H}}{{[\cdot]^{H}}}.
\]
\end{theorem}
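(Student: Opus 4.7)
The plan is to reduce both inequalities to the monotonicity of effective resistance and invoke Thomson's variational principle. By \cref{prop:pseudo}, $[a\mid b]^G/[\cdot]^G = R_G(a,b)$, so the ``more generally'' inequality is precisely $R_G(a,b) \leq R_{G\setminus f}(a,b)$. The edge-probability statement $\ust{G}(e) \leq \ust{G\setminus f}(e)$ with $e=(u,v)$ reduces to the same inequality via the identity $\ust{G}(e) = R_G(u,v)$; this follows from deletion-contraction, since $\ust{G}(e) = [\cdot]^{G/e}/[\cdot]^G$, together with the standard bijection between spanning trees of $G/e$ and spanning $2$-forests of $G$ separating $u$ from $v$, which gives $[\cdot]^{G/e} = [u\mid v]^G$.

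To prove $R_G(a,b) \leq R_H(a,b)$ for any connected spanning subgraph $H \subseteq G$, I would invoke Thomson's principle,
\[
R_G(a,b) = \min\Bigl\{\sum_{e \in E(G)} i_e^2 : i \text{ is a unit flow from } a \text{ to } b \text{ on } G\Bigr\}.
\]
Any unit $a$-to-$b$ flow on $H$ extends to one on $G$ by setting $i_e=0$ on each edge of $E(G)\setminus E(H)$, so the feasible set on $G$ contains that on $H$ and the minimum decreases. This settles the general $H$-version in a single stroke, sidestepping the subtlety that deleting an individual edge of $E(G)\setminus E(H)$ might disconnect an intermediate graph.

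The main obstacle, if one insists on a self-contained combinatorial proof not using electrical-network language, is to find a substitute for Thomson's principle. One natural route is deletion-contraction: from $[\cdot]^G = [\cdot]^{G\setminus f} + [\cdot]^{G/f}$ and (when $f$ is incident to neither $a$ nor $b$) $[a\mid b]^G = [a\mid b]^{G\setminus f} + [a\mid b]^{G/f}$, the claimed inequality is equivalent after cross-multiplication to $R_{G/f}(a,b) \leq R_{G\setminus f}(a,b)$, the intuitive statement that short-circuiting an edge gives a smaller effective resistance than deleting it. The boundary cases where $f$ is incident to $a$ or $b$ require only minor bookkeeping using the analogous contraction identities, after which one can iterate deletion through a spanning-tree-respecting order of $E(G)\setminus E(H)$ to obtain the statement for $H$.
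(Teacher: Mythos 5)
Your proof is correct. The paper offers no proof of this theorem --- it is stated as a known result with citations --- and your argument via Thomson's variational principle is essentially the standard one found in the cited reference (Doyle--Snell), with the reduction $\ust{G}(e) = [\cdot]^{G/e}/[\cdot]^G = [u\mid v]^G/[\cdot]^G = R_G(u,v)$ correctly handling the edge-probability form and the flow-extension argument cleanly covering the general subgraph $H$ in one step. The only caveat is that your closing ``self-contained combinatorial'' sketch is not actually self-contained: it reduces the inequality to $R_{G/f}(a,b) \leq R_{G\setminus f}(a,b)$, which is itself a Rayleigh-type monotonicity statement left unproved; but since that paragraph is offered only as an alternative, it does not affect the validity of the main argument.
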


We will see a strengthening of Rayleigh's Theorem in \cref{lem:strong rayleigh}.

\section{Random cluster model in the $p \uparrow 1$ limit}
\label{sec:pto1}

While the bunkbed problem for percolation is false in general, there are a few notable positive results. 
Hutchcroft, Kent and Nizi\'c-Nikolac~\cite{hutchcroft_et_al_2023} have shown that this bunkbed conjecture holds in the limit $p \uparrow 1$, a result that was reproved using a different method by Hollom~\cite{hollom_2024}. 
We extend this result to the random cluster measure. 

\begin{theorem}\label{thm:p1}
Let $G$ be a graph, $T \subset V(G)$ a collection of posts, $u, v$ vertices in $G$, and $\rc{\tG_T}$ the random cluster measure on the bunkbed graph $\tG_T$. Then, for fixed $q$, as $p \uparrow 1$,
\[
\rc{\tG_T}(u_1 \leftrightarrow v_1) \geq \rc{\tG_T}(u_1 \leftrightarrow v_2).
\]
\end{theorem}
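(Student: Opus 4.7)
The plan is to Taylor expand both probabilities in the variable $r = (1-p)/p$ about $r = 0$ and identify the sign of the leading nonzero coefficient of their difference. By complementation, the desired inequality is equivalent to
\[
\rc{\tG_T}(u_1 \not\leftrightarrow v_2) \geq \rc{\tG_T}(u_1 \not\leftrightarrow v_1),
\]
and I will show this holds for $r$ sufficiently small.

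First, I parametrize configurations by the set $\omega \subseteq \widetilde{E}$ of closed edges; after dividing by $p^{|\widetilde{E}|}$, the random cluster weight becomes proportional to $r^{|\omega|} q^{\kappa(\widetilde{E} \setminus \omega)}$, so each probability is a rational function of $r$ whose coefficients depend polynomially on $q$. Assuming $G$ is connected with $T \neq \emptyset$ (so that $\tG_T$ is connected; the remaining cases are trivial, as one of the two connection probabilities vanishes identically), the normalizing constant equals $q + O(r)$. The event $u_1 \not\leftrightarrow v_a$ requires $\omega$ to contain a $(u_1, v_a)$ edge cut in $\tG_T$; the smallest such $\omega$ is a minimum edge cut of size $c_a := c(u_1, v_a)$. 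A standard minimality argument shows that removing a minimum edge cut from a connected graph yields exactly two components, so $\kappa(\widetilde{E} \setminus \omega) = 2$ on every min cut $\omega$. This gives the leading-order asymptotic
\[
\rc{\tG_T}(u_1 \not\leftrightarrow v_a) = q\, N(u_1, v_a)\, r^{c_a} + O(r^{c_a + 1}),
\]
where $N(u_1, v_a)$ counts the minimum $(u_1, v_a)$ edge cuts in $\tG_T$.

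The sign of the leading coefficient of the difference thus reduces to the purely combinatorial statement that, in $\tG_T$, either $c(u_1, v_1) > c(u_1, v_2)$, or $c(u_1, v_1) = c(u_1, v_2)$ and $N(u_1, v_1) \leq N(u_1, v_2)$. This is precisely what the percolation bunkbed-in-the-limit result of Hutchcroft--Kent--Nizi\'c-Nikolac~\cite{hutchcroft_et_al_2023}, alternatively proved by Hollom~\cite{hollom_2024}, establishes, since the $q = 1$ case of the present theorem reduces to exactly this combinatorial inequality. Both proofs depend only on the $\mathbb{Z}/2$ reflection symmetry between the two levels of the bunkbed, and so they adapt without modification to the post-restricted graph $\tG_T$. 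Combining this with the expansion above yields the inequality for $r$ sufficiently small, i.e.\ for $p$ close to $1$.

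The main technical obstacle is the degenerate scenario $c_1 = c_2$ with $N(u_1, v_1) = N(u_1, v_2)$, in which the leading coefficient vanishes and one must look to higher orders. I expect to handle this either by iterating the combinatorial injection between cut configurations to the next order---now tracking the cluster weight $q^{\kappa}$ carefully, since $\omega$'s of size $c_a + 1$ may create three components rather than two, so the reduction to pure counting must absorb an extra factor---or by observing that any persistent vanishing of all coefficients forces the two rational functions to coincide identically, delivering the weak inequality as equality.
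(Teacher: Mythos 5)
Your approach (expand everything in $r=(1-p)/p$ and compare the leading coefficients of the two disconnection probabilities, which are governed by minimum edge cuts) is genuinely different from the paper's, but it has a gap that you yourself flag and do not close: the degenerate case $c(u_1,v_1)=c(u_1,v_2)$ and $N(u_1,v_1)=N(u_1,v_2)$. This case is not exotic --- it is in fact typical. For example, whenever the minimum cuts separating $u_1$ from $v_1$ and from $v_2$ in $\tG_T$ are forced to live near $u_1$ and near $v$ (say, the edges incident to $u_1$, to $v_1$ and to $v_2$), the two leading coefficients coincide and the entire content of the theorem sits in the higher-order terms, where the $q^{\kappa}$ weights genuinely enter (configurations of size $c_a+1$ can produce three components) and the reduction to the $q=1$ combinatorics breaks down. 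Your two proposed fallbacks do not repair this: iterating the injection "to the next order" is exactly the unproved hard step, and the observation that "persistent vanishing of all coefficients forces equality" is a non sequitur --- the danger is not that all coefficients vanish but that the first non-vanishing one is negative, and nothing in your argument excludes this for general $q$.

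The paper avoids this trap by working with a much stronger input than leading-order positivity. It decomposes over tripartitions $S=(S_0,S_1,S_2)$ of $E(G)$ (neither/exactly one/both copies of an edge open) and imports from Hutchcroft--Kent--Nizi\'c-Nikolac not just a sign statement but a \emph{uniform quantitative} lower bound $F(S)\ge 2^{1-|E(G)|}$ on the conditional difference for every tripartition with $d(S)\le 1$, together with the bound that tripartitions with $d(S)\ge 2$ have relative weight $O\bigl((1-p)^2\bigr)$, and the mirroring identity $F(S)=0$ when $d(S)=\infty$. The passage to general $q$ is then done with the crude but sufficient comparison $q^{-n}\perc{\tG}(X)\le\rc{\tG}(X)\le q^{n}\perc{\tG}(X)$ (and its reverse for $q<1$), which costs only a $p$-independent factor and hence still yields positivity once $\bigl((1-p)/p\bigr)^2\le 2^{-|E(G)|-2}q^{2n}$. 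If you want to salvage your route, you would need an analogue of that uniform lower bound at every order of the $r$-expansion, which is essentially equivalent to redoing the paper's argument; as written, your proposal proves the theorem only for those $(G,T,u,v)$ where the min-cut data already separates the two sides.
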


We now show that the proof in \cite{hutchcroft_et_al_2023} can be adapted to our setting. 

\begin{proof}
To avoid unnecessary notation, we will write $\tG$ instead of $\tG_T$ throughout the proof.
For any tripartition of edges in $E(G)$, which we denote by $S = (S_0, S_1, S_2)$, we may look at those configurations of edges in the bunkbed graph $\tG$ such that 
\begin{itemize} 
\item For every edge $e$ in $S_0$, neither $e_1$ nor $e_2$ appears,
\item  For every edge $e$ in $S_1$, exactly one of $e_1$ and $e_2$ appears, and
\item  For every edge $e$ in $S_2$, both $e_1$ and $e_2$ appear.
\end{itemize}

Let $\mathcal S$ be the set of all such tripartitions. Let further, for $S \in \mathcal{S}$, $\pi(S)$ be the graph obtained from $G$ by deleting all the edges in $S_0$ and contracting all edges in $S_2$. 
When an edge $(x, y)$ is contracted and the two vertices $x$ and $y$ are merged, the resulting vertex will be a post in $\pi(T)$ if at least one of $x,y\in T$. If multiple edges between the same pair of vertices arise upon contraction we remove extra edges so there is only one edge left.

Additionally, we say that a path is \emph{post-free} if no vertex on the path is a post. Let $d(S)$ be the length of the shortest post-free path between $u$ and $v$ in $\pi(S)$. Define $\mathcal{S}_{\infty} \subset \mathcal{S}$ to be those tripartitions where this length is infinity (i.e. no such path exists, for example if $u$ or $v$ have become a post by contractions). 
Similarly, define $\mathcal{S}_{\leq 1} \subset \mathcal{S}$ to be those tripartitions $S$ such that $d(S) \leq 1$, and similarly $\mathcal{S}_{\geq 2} \subset \mathcal{S}$ to be those where $2 \leq d(S) < \infty$. 

We need to show that 
\[
\rc{\tG}(u_1 \leftrightarrow v_1) - \rc{\tG}(u_1 \leftrightarrow v_2)\ge 0.
\]
First we rewrite the difference as \[
\rc{\tG}(u_1 \leftrightarrow v_1) - \rc{\tG}(u_1 \leftrightarrow v_2) = 
\sum_{S\in \mathcal S} F(S)\rc{G}(S),
\]
where $\rc{\tG}(S)$ is the probability of $S$ being the tripartition, and $F(S)$ is the difference above conditioned on the tripartition being $S$.

We will write probabilities with respect to the random cluster measure as 
$\rc{\tG}$
and the percolation measure as 
$\perc{\tG}$. 
The argument by Hutchcroft, Kent and Nizi\'c-Nikolac~\cite{hutchcroft_et_al_2023} is the following: 
\begin{enumerate}
    \item In \cite[Lemma 1]{hutchcroft_et_al_2023}, it is proved that for $S \in \mathcal{S}_{\infty}$, one has $F(S) = 0$. 
    This uses a mirroring argument from \cite{linusson_2011}, which is valid for any random cluster measure.

    \item \label{it:lem3} 
    In \cite[Lemma 3]{hutchcroft_et_al_2023}, it is proved that configurations in $\mathcal{S}_{\geq 2}$ are rare when $p$ is close to 1. 
    Indeed one has that 
    \[
    \perc{\tG}(\mathcal{S}_{\geq 2} )\leq 2e\left(\dfrac{1-p}{p}\right)^2 \perc{\tG}(\mathcal{S}_{\leq 1}),
    \]
    where $e$ is the base of the natural logarithm.

    \item \label{it:lem2} 
    In \cite[Lemma 2]{hutchcroft_et_al_2023}, it is proved that for $S \in \mathcal{S}_{\leq 1}$, we have that $F(S) \geq 2^{1-|E(G)|}$. 
\end{enumerate}

We will now use the inequalities in \cref{it:lem3,it:lem2}.
Let $n$ be the number of vertices in $\tG$.
We also need the simple inequalities that for any event $X$, 
\begin{align}\label{ineq:q}
q^{-n}\perc{\tG}(X) \leq \rc{\tG}(X) \leq q^n \perc{\tG}(X),
\end{align}
for $q > 1$ and 
\begin{align}\label{ineq:q2}
q^{n} \perc{\tG}(X) \leq \rc{\tG}(X) \leq q^{-n}\perc{\tG}(X),
\end{align}
for $q < 1$. In what follows, we assume that $q < 1$; the other case is similar. We use the above inequality to see that 
\[
\rc{\tG}(\mathcal{S}_{\leq 1}) \geq q^{n} \perc{\tG}(\mathcal{S}_{\leq 1}),
\qquad 
\rc{\tG}(\mathcal{S}_{\geq 2} )
\leq q^{-n} \perc{\tG}(\mathcal{S}_{\geq 2} )
\leq q^{-n} \,\bigg(2e\left(\dfrac{1-p}{p}\right)^2\bigg) \perc{\tG}(\mathcal{S}_{\leq 1}).
\]
We then use $F(S)\ge -1$ for all $S\in \mathcal{S}_{\geq 2}$ to get
\[
\sum_{S\in \mathcal S} F(S)\rc{\tG}(S) \geq \left(q^n2^{1-|E(G)|}  - 2eq^{-n}\left(\dfrac{1-p}{p}\right)^2 \right) \perc{\tG}(\mathcal{S}_{\leq 1}).
\]
To complete the proof, we observe that this is positive when $p$ is close enough to $1$, say when
\[
\left(\dfrac{1-p}{p}\right)^2 \leq 2^{-|E(G)|-2}q^{2n},
\]
and so $p \geq 1/(1 + 2^{-|E(G)|/2-1}q^{n})$, completing the proof.
\end{proof}

\section{Complete graphs and complete bipartite graphs}
\label{sec:complete and bipartite}

We will prove the bunkbed conjecture for the random cluster measure in these two special cases.

\subsection{Complete graphs}
\label{sec:complete}

\begin{theorem}
\label{thm:complete}
The bunkbed conjecture holds for the random cluster measure on the complete graph $K_n$. 
\end{theorem}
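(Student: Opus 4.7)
The plan is to condition on the vertical-edge configuration of the bunkbed graph $\widetilde{K_n}$ and exploit the full $S_n$-symmetry of $K_n$ to reduce the bunkbed inequality to a combinatorial statement amenable to a bijective argument.

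First, I would condition on the random subset $T \subseteq V$ of vertices whose vertical edge is open. If $u \in T$, then $u_1 \leftrightarrow u_2$ always holds, and the layer-swap automorphism of $\widetilde{K_n}$---which is a graph automorphism preserving the conditioning on $T$---sends the event $\{u_1 \leftrightarrow v_1\}$ to $\{u_2 \leftrightarrow v_2\}$; composed with $u_1 \leftrightarrow u_2$, this equals $\{u_1 \leftrightarrow v_2\}$, giving equality in \eqref{bunkbed RC} conditionally. The same argument applies if $v \in T$. Hence we may restrict attention to $T \subseteq V \setminus \{u,v\}$, and by the $S_{V \setminus \{u,v\}}$-symmetry of $K_n$, the conditional probability depends only on $|T|$.

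Second, using the partition notation from \cref{sec:background},
\[
\rc{\widetilde{K_n}}(u_1 \leftrightarrow v_1 \mid T) - \rc{\widetilde{K_n}}(u_1 \leftrightarrow v_2 \mid T) = \rc{\widetilde{K_n}}(u_1 v_1, v_2 \mid T) - \rc{\widetilde{K_n}}(u_1 v_2, v_1 \mid T),
\]
since configurations in which $u_1, v_1, v_2$ all lie in the same component contribute equally to both sides. It therefore suffices to establish $\rc{\widetilde{K_n}}(u_1 v_1, v_2 \mid T) \geq \rc{\widetilde{K_n}}(u_1 v_2, v_1 \mid T)$ for each fixed $T \subseteq V \setminus \{u, v\}$.

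Third, I would attempt to build a weight-preserving injection $\Phi$ from configurations $F$ realizing the partition event $\{u_1 v_2, v_1\}$ to configurations realizing $\{u_1 v_1, v_2\}$. Given such $F$, fix a simple path $\pi$ from $u_1$ to $v_2$ in the open edges of $F$; it must use an open vertical edge at some post $w \in T$. The map $\Phi$ would ``reflect'' the suffix of $\pi$ past its last layer-crossing, replacing each top-layer horizontal edge $(x_2, y_2)$ by its bottom-layer analogue $(x_1, y_1)$, so that $\pi$ now terminates at $v_1$ instead of $v_2$. The completeness of $K_n$ guarantees that all reflected edges exist, and the construction manifestly preserves $|F|$.

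The central obstacle is ensuring $\kappa(\Phi(F)) = \kappa(F)$ so that the random cluster weight $q^{\kappa(F)}$ is preserved: an edge reflection can merge or split components elsewhere in $\widetilde{K_n}$, altering $\kappa$ unpredictably. Resolving this will likely require a more subtle choice of reflection (pairing it with a compensating edge move whose existence is guaranteed by the complete-graph structure), or working with equivalence classes of configurations under the $S_{V \setminus (T \cup \{u, v\})}$-action and aggregating weights within each orbit. The abundance of edges and of parallel paths in $K_n$ should provide enough flexibility to force such an injection to exist, but getting the component count to match exactly is where I expect the real combinatorial work to lie.
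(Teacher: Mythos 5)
Your reduction steps are fine: conditioning on the set of open vertical edges, disposing of the cases $u\in T$ or $v\in T$ by the layer-swap symmetry, and rewriting the difference as $\rc{\tG}(u_1v_1, v_2)-\rc{\tG}(u_1v_2,v_1)$ are all correct. But the entire content of the theorem lives in the step you have not carried out, namely the weight-preserving injection $\Phi$, and you acknowledge yourself that you cannot control $\kappa(\Phi(F))$. This is not a technical loose end; it is the whole problem. Worse, the reflection you describe --- replace the top-layer edges on the suffix of a $u_1$--$v_2$ path past its last crossing by their bottom-layer copies --- uses nothing specific to $K_n$: in the bunkbed graph of \emph{any} graph, every top-layer edge has a bottom-layer mirror, so ``completeness guarantees the reflected edges exist'' is vacuous. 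If such an injection existed by this recipe it would prove the bunkbed conjecture for all graphs, which \cref{thm:bunkbed false} shows is false. Any successful argument must use completeness in a structurally essential way, and your sketch does not identify where that would happen; the suggested fallback (aggregating over $S_{V\setminus(T\cup\{u,v\})}$-orbits) is closer to what actually works but is left entirely unspecified.

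For contrast, the paper's proof (following van Hintum--Lammers) avoids injections altogether. It conditions on the configuration in $\tG\setminus\{v_1,v_2\}$, takes $u_1$ to be a \emph{uniformly random} vertex of the bottom layer other than $v_1$ (legitimate precisely because of the vertex-transitivity of $K_n$ fixing $v$), and writes both connection probabilities as sums over the components $C_j$ weighted by $\ell(C_j)/(n-1)$. After bookkeeping the random cluster weights via a partition of the components into those touching $v_1$ only, $v_2$ only, or neither (contributing $q^{|Z|+2}$), the difference reduces to $\sum_j \ell(C_j)\bigl(r^{u(C_j)}-r^{\ell(C_j)}\bigr)(\cdots)$; symmetrizing with the top-bottom reflection turns each term into $\bigl(\ell(C_j)-u(C_j)\bigr)\bigl(r^{u(C_j)}-r^{\ell(C_j)}\bigr)\ge 0$. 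That averaging-plus-symmetrization step is the idea your proposal is missing, and it is where completeness is genuinely used: the probability that $v_1$ attaches to a component depends only on how many bottom-layer vertices that component contains.
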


The proof is a simple extension of the proof in the regular percolation case  given by Hintum and Lammers~\cite{van_Hintum_2019}  and we proceed by describing their short and elegant proof. 
Consider a configuration of edges in the subgraph of the bunkbed graph of $K_n$ after removing the two ending vertices $v_1, v_2$. Hintum and Lammers show that, conditioned on any given configuration, the bunkbed conjecture is true when the starting vertex 
$u_1$ is chosen to be a uniformly random vertex in $V \setminus \{v_1\}$.

\begin{proof}
Let $\tG$ be the bunkbed graph of $K_n$ and let $v_1, v_2$ be paired vertices in $\tG$ with $v_1$ below, as usual. Fix a configuration of edges in the subgraph $V(\tG) \setminus \{v_1, v_2\}$ without worrying about the probability of realising this configuration for now. Let $\mathcal{C} = \{C_1, \ldots, C_m\}$ be the connected components in this subgraph.
We will use $\ell(C_j)$ and $u(C_j)$ to denote the number of vertices of $C_j$ in the lower graph and upper graph respectively, and $v\sim C$ to denote that there is an edge from $v$ to some vertex in the component $C$.
In the calculation of probabilities, we use the standard notation $\wedge$ for the joint occurrence of events.

Conditioned on $\mathcal{C}$, the probability that $u_1$ (which is a uniformly random vertex downstairs, distinct from $v_1$) is connected to $v_1$ but not to $v_2$ is given by 
\[
\sum_{j = 1}^m 
\rc{\tG}\left[(u_1 \in C_j) \wedge (v_1 \sim C_j) \wedge (v_2 \not\sim C_j) \wedge (\not\!\exists k \neq j \mid v_1 \sim C_k \sim v_2)\right].
\]
Also, the probability that $u_1$ is connected to $v_2$ but not to $v_1$ is given by 
\[
\sum_{j = 1}^m  \rc{\tG}\left[(u_1 \in C_j) \wedge (v_2 \sim C_j) \wedge (v_1 \not\sim C_j) \wedge (\not\!\exists k \neq j \mid v_1 \sim C_k \sim v_2)\right].
\]
Since $u_1$ is chosen uniformly, we may rewrite the first expression as 
\[
\sum_{j = 1}^m \dfrac{\ell(C_j)}{n-1} \rc{\tG}\left[ (v_1 \sim C_j) \wedge (v_2 \not\sim C_j) \wedge (\not\exists k \neq j \mid v_1 \sim C_k \sim v_2)\right].
\]
The other expression may be similarly simplified. 

We need to show that 
\begin{multline}
\label{lhs geq rhs}
\sum_{j = 1}^m \dfrac{\ell(C_j)}{n-1} \Bigg( \rc{\tG}\left[(v_1 \sim C_j) \wedge (v_2 \not\sim C_j) \wedge (\not\!\exists k \neq j \mid v_1 \sim C_k \sim v_2)\right]\Bigg)\\
\geq \sum_{j \in [m]}\dfrac{\ell(C_j)}{n-1}  \Bigg(\rc{\tG}\left[(v_2 \sim C_j) \wedge (v_1 \not\sim C_j)\wedge (\not\!\exists k \neq j \mid v_1 \sim C_k \sim v_2)\right]\Bigg).
\end{multline}
In the case of percolation, we may write out explicit expressions for the quantities above. However, in the case of the more general random cluster measures, we need to do some additional bookkeeping. We now examine how $v_1, v_2$ are connected to this fixed configuration. 

Let $r = 1-p$ for convenience. 
Given the connected components $C_1, \ldots, C_m$ in $\tG \setminus \{v_1, v_2\}$, we naturally obtain a partition 
{$\mathcal{P} = (A_1, A_2, A_3)$ of $[m]$ into three parts,}
where 
\begin{itemize}
\item $A_1$ consists of components to which $v_1$ is connected but $v_2$ is not.
\item $A_2$ consists of components to which $v_2$ is connected but $v_1$ is not.
\item $A_3$ consists of components to which neither $v_1$ nor $v_2$ is connected.
\end{itemize}
{The fourth possibility, namely that both $v_1$ and $v_2$ are connected to some $C_i$, does not contribute to either side of \eqref{lhs geq rhs}; hence we may assume that there is no such component.} 

All the vertices in the components $C_i$ for $i \in A_1$ together with $v_1$ form a single connected component, as do all the vertices in $C_i$ for $i \in A_2$, together with $v_2$. The vertices in $C_i$ for any $i \in A_3$ form separate components, leading to a total of $|A_3|+2$ components. 

We multiply out both quantities in \eqref{lhs geq rhs} by $n-1$ to simplify calculations. 
Summing over all partitions $(X, Y, Z)$ of $[m] \setminus \{j\}$ into three parts, we have that the left hand side is given by 
\begin{align*} 
L 
=\sum_{j = 1}^m \ell(C_j) & \sum_{X \amalg Y \amalg Z = [m]\setminus \{j\}} \Bigg( \rc{\tG}\left[(v_1 \sim C_j) \wedge (v_2 \not\sim C_j) \right. \\
& \left. \wedge (\not\exists k \neq j \mid v_1 \sim C_k \sim v_2)\wedge \mathcal{P} = (X \cup\{j\}, Y, Z)\right]\Bigg)\\
=\sum_{j = 1}^m \ell(C_j) & \,\sum_{X \amalg Y \amalg Z = [m]\setminus \{j\}} q^{|Z|+2} (1-r^{\ell(C_j)})r^{u(C_j)} \prod_{i \in X} (1-r^{\ell(C_i)}) r^{u(C_i)} \\
& \times\prod_{i \in Y}  (1-r^{u(C_i)}) r^{\ell(C_i)}\prod_{i \in Z}  r^{\ell(C_i)} r^{u(C_i)}\\
=\sum_{j = 1}^m \ell(C_j) & (1-r^{\ell(C_j)}) r^{u(C_j)}\sum_{X \amalg Y \amalg Z = [m]\setminus \{j\}} f_j(\mathcal{C}, X, Y, Z),
\end{align*}
where the quantity $f_j(\mathcal{C}, X, Y, Z)$ depends on $X, Y, Z$ and $\mathcal{C}$ as above. 
The important property here is that for a fixed $Z$ the random cluster measure behaves as percolation. Similarly, the right hand side of \eqref{lhs geq rhs} becomes
\begin{align*} 
R 
=\sum_{j = 1}^m \ell(C_j) &\, \sum_{X \amalg Y \amalg Z = [m]\setminus \{j\}} \Bigg( \rc{\tG}\left[(v_2 \sim C_j) \wedge (v_1 \not\sim C_j) \right. \\
& \left. \wedge (\not\exists k \neq j \mid v_1 \sim C_k \sim v_2)\wedge \mathcal{P} = (X, Y\cup\{j\}, Z)\right]\Bigg)\\
=\sum_{j = 1}^m \ell(C_j) & \,(1-r^{u(C_j)}) r^{\ell(C_j)}\sum_{X \amalg Y \amalg Z = [m]\setminus \{j\}} f_j(\mathcal{C}, Y,X, Z),
\end{align*}
where the function $f_j$ is the same as before. Note how changing the roles of $v_1, v_2$ swaps $X$ and $Y$. 
Since the inner sum in $R$ is summing over all partitions $X \amalg Y \amalg Z$, it will be equal to the inner sum in $L$.

We need to show that $L - R \geq 0$, i.e. that
\begin{align*}
0 \leq    &\sum_{j = 1}^m  \ell(C_j) \Bigg((1-r^{\ell(C_j)}) r^{u(C_j)} - (1-r^{u(C_j)}) r^{\ell(C_j)}\Bigg) \sum_{X \amalg Y \amalg Z = [m]\setminus \{j\}} f_j(\mathcal{C}, X, Y, Z)\\
    =&\sum_{j \in [m]} \ell(C_j) \Bigg( r^{u(C_j)} - r^{\ell(C_j)}\Bigg)\sum_{X \amalg Y \amalg Z = [m]\setminus \{j\}} f_j(\mathcal{C}, X, Y, Z).
\end{align*}
Intuitively this is non-negative because the positive terms have $\ell(C_j)>u(C_j)$. To prove this rigorously, we now switch the roles of $v_1$ and $v_2$, and simultaneously $u_1$ and $u_2$. 
By symmetry, the above value also equals
\[
\sum_{j = 1}^m  u(C_j) \Bigg( r^{\ell(C_j)} - r^{u(C_j)}\Bigg)\sum_{X \amalg Y \amalg Z = [m]\setminus \{j\}}f_j(\mathcal{C}, X, Y, Z).
\]
Summing the two, we get
\[
\sum_{j = 1}^m  \bigg(\ell(C_j)-u(C_j)\bigg) \Bigg( r^{u(C_j)} - r^{\ell(C_j)}\Bigg)\sum_{X \amalg Y \amalg Z = [m]\setminus \{j\}} f_j(\mathcal{C}, X, Y, Z).
\]
Each term is positive since $(x-y)(r^y-r^x) \geq 0$ whenever $r \in [0, 1]$ and $x, y \in \mathbb{R}$.
\end{proof}

\subsection{Complete bipartite graphs}
The bunkbed conjecture for complete bipartite graphs was proved by Richthammer, using a extension of the argument in \cite{van_Hintum_2019}. 
He observed that the argument in \cref{sec:complete}, in verbatim fashion, proves in~\cite[Corollary 1]{richthammer_2022} that 
\[
\rc{\tG}(u_1 \leftrightarrow v_1) \geq \rc{\tG}(u_1 \leftrightarrow v_2),
\]
provided that $u$ and $v$ are in different sides of the bipartition. Indeed, the argument actually shows the following slightly more general statement. Let $G$ be a graph and $v$ be a vertex so that given any two neighbours of $v$, there is an automorphism of $G$ that fixes $v$ and swaps these neighbours. Then for any neighbour $u$ of $v$, the bunkbed conjecture is true for $u$ and $v$. In particular, this shows that the conjecture is true for complete multipartite graphs, provided $u, v$ are not in the same partition. The proof of \cref{thm:complete} is easily adapted to this setting and we omit the details.

We focus on complete bipartite graphs here.
Let $G = K_{s, t}$ be a complete bipartite graph. 

\begin{theorem}
\label{thm:complete bipartite}
The bunkbed conjecture holds for the random cluster measure on complete bipartite graphs for any pair of vertices $u, v$. 
\end{theorem}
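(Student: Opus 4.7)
The proof splits into two subcases based on whether $u$ and $v$ lie in the same part or different parts of the bipartition. The different-part case is already handled by Richthammer's argument discussed above: since the automorphism group of $K_{s,t}$ fixing $v$ acts transitively on $N(v)$, averaging over $u$ in the opposite part reduces matters to the Hintum--Lammers calculation of \cref{thm:complete} verbatim.

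For the same-part case, assume $u, v \in A$, where $A, B$ are the two parts of $K_{s,t}$. The plan is to average $u$ uniformly over $A \setminus \{v\}$, which is valid by the full $S_{s-1}$ symmetry on $A \setminus \{v\}$ fixing $v$. Following \cref{thm:complete}, fix a configuration of $\tG \setminus \{v_1, v_2\}$ yielding components $\mathcal{C} = \{C_1, \ldots, C_m\}$, and for each $C_j$ track four counts rather than two: $\alpha_\ell(C_j), \alpha_u(C_j)$ for $A$-vertices (apart from $v$) on the lower and upper layer, and $\beta_\ell(C_j), \beta_u(C_j)$ for $B$-vertices. Since all neighbors of $v$ in $K_{s,t}$ lie in $B$, the probability (conditional on $\mathcal{C}$) that $v_1$ connects directly to $C_j$ depends only on $\beta_\ell(C_j)$, while the probability that $u_1 \in C_j$ under uniform $u$ equals $\alpha_\ell(C_j)/(s-1)$. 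Running the Hintum--Lammers computation in this bipartite setting and then applying the global upper--lower swap on $\mathcal{C}$ (which exchanges $\alpha_\ell \leftrightarrow \alpha_u$ and $\beta_\ell \leftrightarrow \beta_u$ while preserving the auxiliary factor $f_j$), one expresses the difference $L - R$ up to a positive prefactor as
\[
\sum_{\mathcal{C}} \rc{\tG}(\mathcal{C}) \sum_{j=1}^m \bigl(\alpha_\ell(C_j) - \alpha_u(C_j)\bigr)\bigl(r^{\beta_u(C_j)} - r^{\beta_\ell(C_j)}\bigr)\, f_j(\mathcal{C}),
\]
where $r = 1 - p$ and $f_j \ge 0$ captures the contribution of the other components and the vertical edge $(v_1, v_2)$.

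The chief obstacle is that this sum is no longer termwise non-negative. In the complete graph case, both factors of each summand were governed by the single quantity $\ell(C_j)$, but in the bipartite setting the $A$-imbalance $\alpha_\ell - \alpha_u$ and the $B$-imbalance $\beta_\ell - \beta_u$ are independent, and one can exhibit explicit components, for instance $\{a_1, a'_1, b_1, b_2, a''_2, b''_2\}$ with suitable edges, on which the two imbalances have opposite signs. To close this gap, the natural strategy is an averaged correlation argument: under the conditional random cluster measure, the $A$- and $B$-imbalances of a typical component should be positively correlated because connectivity in $\tG_{K_{s,t}}$ forces alternation through $A$ and $B$. I would try to make this rigorous either via a finer pairing of configurations, locally swapping the layer of a single $B$-vertex together with a compensating local rearrangement, or via a conditioning argument based on whether $u_1 \leftrightarrow u_2$: the event $u_1 \leftrightarrow u_2$ contributes zero to $L - R$ by upper--lower swap symmetry, so one reduces to the case $u_1 \not\leftrightarrow u_2$, where the $A$- and $B$-imbalance dependence decouples more cleanly. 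In either approach, the main technical difficulty is to preserve compatibility with the cluster weight $q^{\kappa(S)}$ of the random cluster measure.
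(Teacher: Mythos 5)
Your treatment of the different-part case is fine and matches the paper: Richthammer's observation reduces it to the Hintum--Lammers computation. The problem is the same-part case, where your argument has a genuine gap that you yourself identify but do not close. After averaging $u$ over $A \setminus \{v\}$ and conditioning on the components of $\tG \setminus \{v_1, v_2\}$, you arrive at a sum whose terms contain the product $\bigl(\alpha_\ell(C_j) - \alpha_u(C_j)\bigr)\bigl(r^{\beta_u(C_j)} - r^{\beta_\ell(C_j)}\bigr)$, and since the $A$-imbalance and the $B$-imbalance of a component are not tied together, this is not termwise non-negative. The proposed fixes (a positive-correlation statement for the two imbalances, a local swapping bijection, conditioning on $u_1 \leftrightarrow u_2$) are left as speculation, and the correlation statement you would need is itself a nontrivial inequality for the random cluster measure at general $q$ --- including $q < 1$, where FKG-type tools are unavailable. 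As written, the same-part case is not proved.

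The paper's proof takes a different route that sidesteps this obstacle entirely: it does not average over $u$ at all. Instead, one conditions on the components $\mathcal{C}$ of $\tG \setminus \{u_1, u_2, v_1, v_2\}$, removing \emph{all four} distinguished vertices. Expanding $I = \rc{\tG}(u_1 \leftrightarrow v_1) - \rc{\tG}(u_1 \leftrightarrow v_2)$ into partition probabilities and using the global upper--lower reflection $w_1 \leftrightarrow w_2$ cancels the terms where $u_1 \leftrightarrow u_2$ or $v_1 \leftrightarrow v_2$; averaging $I$ with its reflected form gives $2I = M + 2N$ for two explicit signed combinations of four-vertex partition probabilities. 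The decisive point is that since $u, v \in A$, each of $u_1, u_2, v_1, v_2$ attaches to a component $C_i$ only through its $B$-vertices, so every attachment probability is of the form $1 - r^{\ell R(C_i)}$ or $1 - r^{uR(C_i)}$ --- the quantities $\alpha_\ell, \alpha_u$ that derail your computation never enter. Summing over the index sets of components shared between the relevant pairs, $M$ and $N$ each reduce to sums of squares of the form $\bigl(\prod_i (1 - r^{uR(C_i)}) - \prod_i (1 - r^{\ell R(C_i)})\bigr)^2$ times $q$ to the power of the component count, which handles the cluster weight exactly. If you want to salvage your approach you would need to prove the imbalance-correlation inequality you postulate, but the four-vertex conditioning is the cleaner path.
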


\begin{proof} 
As mentioned above, the proof in \cite[Corollary 1]{richthammer_2022} covers the situation when $u,v$ are in different sides of the bipartition. 
So now suppose $u, v$ are on the same side of the bipartition.
Without loss of generality, assume they are on the left side.
Observe that relevant expression for the bunkbed conjecture can be written as
\begin{align*}
    I =& \,\rc{\tG}(u_1 \leftrightarrow v_1) - \rc{\tG}(u_1 \leftrightarrow v_2)\\
    =& \,\rc{\tG}(u_1 v_1 , u_2 , v_2) + \rc{\tG}(u_1 v_1 , u_2 v_2) 
    + \rc{\tG}(u_1 u_2 v_1, v_2) + \rc{\tG}(u_1 v_1 v_2, u_2)\\
 &- \, \rc{\tG}(u_1 v_2 , u_2 , v_1) - \rc{\tG}(u_1 v_2 , u_2 v_1) 
 - \rc{\tG}(u_1 u_2 v_2, v_1) -\rc{\tG}(u_2 v_1 v_2, u_1).
\end{align*}
We can simplify this by noting that 
\[
\rc{\tG}(u_1 u_2 v_1, v_2) = \rc{\tG}(u_1 u_2 v_2, v_1).
\]
This is because the map that switches $w_1$ and $w_2$ for each $w \in V(G)$ does not change connection probabilities. Similarly, we have that  
\[
\rc{\tG}(u_1 v_1 v_2, u_2) = \rc{\tG}(u_2 v_1 v_2, u_1).
\]
We may may therefore write 
\begin{align*}
  I = \rc{\tG}(u_1 v_1 , u_2 , v_2) + \rc{\tG}(u_1 v_1 , u_2 v_2) 
    - \rc{\tG}(u_1 v_2 , u_2 , v_1) - \rc{\tG}(u_1 v_2 , u_2 v_1).
\end{align*}
By switching the roles of $u_1, u_2$ and $v_1, v_2$, this also equals 
\begin{align*}
    I =& \,\rc{\tG}(u_2 \leftrightarrow v_2) - \rc{\tG}(u_2 \leftrightarrow v_1)\\
    =& \,\rc{\tG}(u_2 v_2 , u_1 , v_1) + \rc{\tG}(u_2 v_2 , u_1 v_1) 
    -\rc{\tG}(u_2 v_1 , u_1 , v_2) - \rc{\tG}(u_2 v_1 , u_1 v_2).
\end{align*}
Summing, we get
\begin{align*}
    2I =& \left(\rc{\tG}(u_2 v_2 , u_1 , v_1) + \rc{\tG}(u_1 v_1 , u_2 , v_2)  
    -\rc{\tG}(u_2 v_1 , u_1 , v_2) \right. \\
    & \left. - \rc{\tG}(u_1 v_2 , u_2 , v_1)\right)
    +2 \left(\rc{\tG}(u_2 v_2 , u_1 v_1) -   \rc{\tG}(u_2 v_1 , u_1 v_2)\right).
\end{align*}
We will now show that both expressions, 
\[
M = \rc{\tG}(u_2 v_2 , u_1 , v_1) + \rc{\tG}(u_1 v_1 , u_2 , v_2)  
- \rc{\tG}(u_2 v_1 , u_1 , v_2) - \rc{\tG}(u_1 v_2 , u_2 , v_1),
\]  
as well as 
\[
N = \rc{\tG}(u_2 v_2 , u_1 v_1) -   \rc{\tG}(u_2 v_1 , u_1 v_2),
\] 
are positive. 

Let $\mathcal{C} = \{C_1, \ldots, C_m\}$ be a partition of $\tG\setminus \{u_1, u_2, v_1, v_2\}$ into connected components. 
We will show that in fact $M$ and $N$, conditioned on $\mathcal{C}$, are positive. 
Consider four subsets $U_1, V_1,U_2, V_2$ of $[m]$
such that $u_1, v_1,u_2, v_2$ are connected by an edge to the components of $\mathcal{C}$ indexed by $U_1, V_1, U_2, V_2$ respectively. 
Let $\mathcal{P} = (U_1, V_1,U_2, V_2)$ be the tuple of these subsets.
Not all collections of subsets are relevant to the calculation of the expressions $M$ and $N$. For instance, 
$\rc{\tG}(u_1 v_1 , u_2 v_2)$ is nonzero only when $|U_1 \cap V_1| > 0$, $|U_2 \cap V_2| > 0$ and the sets $U_1 \cup V_1$ and $U_2 \cup V_2$ are disjoint. Also note that no component can have edges to three or four of these vertices.

It will be convenient to work with partitions $A \amalg B \amalg C \amalg D \amalg X \amalg Y = [m]$, where $X$ (resp. $Y$) indexes components which are connected to more than one (resp. none) of $u_1, u_2, v_1, v_2$. 
We will always assume that $|X| > 0$, i.e. we will only be interested in configurations where at least two of $u_1, v_1, u_2, v_2$ are in the same component. This is because any expression occurring in $M$ and $N$ is of this sort. We see that $M$ conditioned on $\mathcal{C}$ is 
\begin{align*}
    & \sum_{A \amalg B \amalg C \amalg D \amalg X \amalg Y = [m]} 
    \rc{\tG}((u_2 v_2 , u_1 , v_1) \wedge \mathcal{P} = (A, B, C \cup X, D \cup X))\\
    +&\sum_{A \amalg B \amalg C \amalg D \amalg X \amalg Y = [m]} 
    \rc{\tG}((u_1 v_1 , u_2 , v_2) \wedge \mathcal{P} = (A \cup X, B \cup X, C, D))\\
    -&\sum_{A \amalg B \amalg C \amalg D \amalg X \amalg Y = [m]} 
    \rc{\tG}((u_2 v_1 , u_1 , v_2) \wedge \mathcal{P} = (A, B \cup X, C \cup X, D))\\
    -&\sum_{A \amalg B \amalg C \amalg D \amalg X \amalg Y = [m]} 
    \rc{\tG}((u_1 v_2 , u_2 , v_1) \wedge \mathcal{P} = (A \cup X, B, C, D \cup X)).
\end{align*}
Note that in all cases, the number of connected components equals $|Y|+3$. Again, let 
$r=1-p$.
Further, given the subset $C_i$, this might contain vertices from both the left and right sides of the bipartition as well as from the lower and upper sets of vertices with respect to the bunkbed structure. We will use $\ell R(C_i)$ to denote the number of vertices in $C_i$ that are in the lower part of the right partition, $uR(C_i)$ to denote the number of vertices in the upper part of the right partition. 

Recall that $u,v$ are in the left partition of $K_{s,t}$.
We have that 
\[
\rc{\tG}((u_2, v_2 , u_1 , v_1) \wedge (\mathcal{P} = (A, B, C \cup X, D \cup X))
\] 
is proportional to 
\begin{align*}
\prod_{i \in A}\left(1-r^{\ell R(C_i)}\right)\prod_{i \in B}\left(1-r^{\ell R(C_i)}\right)\prod_{i \in C}\left(1-r^{uR(C_i)}\right)\prod_{i \in D}\left(1-r^{uR(C_i)}\right)\prod_{i \in X} \left(1-r^{uR(C_i)}\right)^2q^{|Y|+3}
\end{align*}
Assume now that $A, B, C, D$ are fixed. Then we have an inner sum over $X$ which will be
\[
\prod_{i \in X}\left(1-r^{uR(C_i)}\right)^2 q^{|Y|+3}.
\]
We may calculate the other three terms similarly and by fixing $A,B,C,D$ we end up getting a joint inner sum over $X$
\begin{multline*}
\sum_{X}\left(\prod_{i \in X}\left(1-r^{uR(C_i)}\right)^2 + \prod_{i \in X}\left(1-r^{\ell R(C_i)}\right)^2 - 2\prod_{i \in X}\left(1-r^{uR(C_i)}\right)\left(1-r^{\ell R(C_i)}\right)\right) q^{|Y|+3}\\ 
=\sum_{X\subset [m]\setminus \{A\cup B\cup C \cup D\}}\left (\prod_{i \in X}\left(1-r^{uR(C_i)}\right) -\prod_{i \in X}\left(1-r^{\ell R(C_i)}\right)\right )^2  q^{|Y|+3} \geq 0.
\end{multline*}

For the case of the expression $N$, we again work with partitions, but of a slightly different sort. We take a partition 
\[
A \amalg B \amalg C \amalg D \amalg X \amalg Y \amalg Z= [m],
\]
and look at the subset collections $(A \cup X, B \cup X, C \cup Y, D \cup Y)$ and $(A \cup X, B \cup Y, C \cup Y, D \cup X)$. Note that in both cases, the number of connected components is $|Z| + 2$. 
\begin{align*}
\sum_{X,Y}\left(\prod_{i \in X}\left(1-r^{\ell R(C_i)}\right)^2\prod_{i \in Y}\left(1-r^{uR(C_i)}\right)^2 - \prod_{i \in X\cup Y}\left(1-r^{\ell R(C_i)}\right)\left(1-r^{uR(C_i)}\right)\right) q^{|Z|+2}.
\end{align*}
Pairing up each term over $X,Y$ with the term over $Y,X$, i.e. the sets are swapped, we get a simple quadratic inequality exactly as above. 
\end{proof}

\section{Outerplanar graphs and the bunkbed conjecture for forests}
\label{sec:outerplanar}

As discussed in \cref{sec:alt bunkbed}, it is sufficient to look at the alternate bunkbed conjecture for forests, \cref{conj:alt forest}.
Let $G = (V, E)$ be a minimal (in the number of edges) counterexample for the alternate bunkbed conjecture for forests on outerplanar graphs with $T \subset V$ being the set of posts. 
Recall that we use $\altforest{G}{T}$ for the uniform measure on red-blue colourings of edges of $G$, where edges on non-post vertices have the same colour and there are no cycles of the same colour.
Because of the last condition, there is a further salubrious consequence. 

\begin{figure}[ht!]
\centering 
\begin{tikzpicture}
  \node[circle, fill=black, label=above:$u$, inner sep=2pt] (u) at (0,0) {};
      \node[circle, fill=black, label=right:$v$, inner sep=2pt] (v) at (6.05,0) {};
  \node[circle, fill=black, label=left:$x$, inner sep=2pt] (x) at (2,1.73) {};
  \node[circle, fill=black, label=left:$y$, inner sep=2pt] (y) at (2,-1.73) {};
  \node[circle, fill=black, label=above:$z$, inner sep=2pt] (z) at (5,1.73) {};
  \node[circle, fill=black, label=right:$ $, inner sep=0pt] (b) at (5,-1.73) {};
\draw (2,1.73) circle (6pt);
\draw (u) -- (x) -- (z) -- (y);
\draw (y) -- (u);
    \draw[dotted] (b) -- (y);
    \draw[dotted, thick] (z) to[out=0,in=0] (b);
\end{tikzpicture}
\caption{Structure of the face containing $u$ in \cref{lem:first face}.}
\label{fig:firstface}
\end{figure}
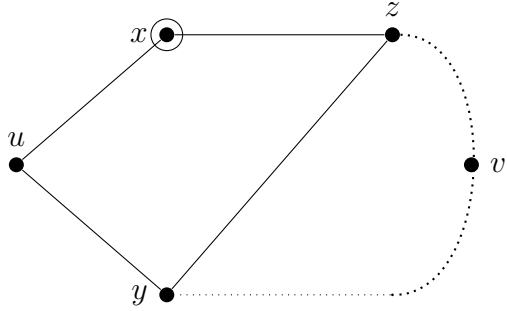

\begin{proposition}
\label{prop:triangle}
{If a graph $G$ is a minimal counterexample to the alternate bunkbed conjecture for forests}, then $G$ cannot contain any triangles. 
\end{proposition}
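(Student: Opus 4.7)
The plan is to derive a contradiction from minimality: if $G$ is a minimal counterexample and contains a triangle, I will construct a strictly smaller outerplanar counterexample, in the spirit of the conditioning arguments used in the proof of \cref{thm:linusson}(b). Suppose then that $G$, with posts $T$ and specified vertices $u, v$, is such a minimal counterexample and contains a triangle on vertices $a, b, c$.

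First, I would pin down the planar position of the triangle using \cref{thm:linusson}. Conditions (d) and (e) -- that $G$ has two non-intersecting boundary paths $P_1, P_2$ from $u$ to $v$ and that every chord separates them -- rule out most placements: a triangle cannot have all three vertices on a single boundary path, since that would require a chord lying entirely within one path, which cannot separate $u$ from $v$. Up to symmetry (and the corner case where $b = u$ or $b = v$, which I would handle analogously), the surviving configuration is a single boundary edge $ab$ with $a, b$ consecutive on $P_1$, together with two chords $ac$, $bc$ meeting at a vertex $c \in P_2$. By \cref{thm:linusson}(c), at most one of $a, b, c$ is a post.

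Next, I would apply the cycle constraint of the arboreal gas measure to the triangle itself. Traversing the triangle as a cycle, each non-post vertex forces its two incident edges in the cycle to share a color; since at most one vertex of the triangle is a post, the only admissible colorings of the triangle are the two monochromatic ones, and hence every valid coloring in $\altforest{G}{T}$ colors the triangle non-monochromatically. I would then condition on the colors of the two chords $ac$ and $bc$: when they agree, $ab$ is forced to the opposite color; when they differ, $ab$ is free. This splits the set of valid colorings into four classes, and in each class I plan to exhibit a weight-preserving correspondence with valid colorings of a strictly smaller outerplanar graph $G'$ obtained by a $2$-sum-type reduction on the triangle -- for instance $G' = G \setminus \{ab\}$ together with a suitable update of the post set $T'$ so that admissible paths through $c$ that use both chords of the triangle are tracked correctly. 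By minimality, $G'$ satisfies the alternate bunkbed inequality, and summing over the four conditioning cases would transfer the inequality to $G$, contradicting the counterexample assumption.

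The hard part will be making this correspondence precise and verifying that, under it, the events $u \leftrightarrow_{RR} v$ and $u \leftrightarrow_{RB} v$ on the $G$-side map to the corresponding events on the $G'$-side. An admissible path from $u$ to $v$ can enter the triangle at $a$, $b$, or $c$ and exit at any other triangle vertex, using the direct edge $ab$, a single chord, or both chords via $c$; each option imposes different color-transition constraints depending on whether $c \in T$, and the conditioning must be set up so that these contributions reassemble correctly. Simultaneously, one must check that $G'$ with its updated posts $T'$ still satisfies the hypotheses of \cref{thm:linusson}, so that its minimality may be legitimately invoked to conclude.
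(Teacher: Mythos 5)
Your setup is sound: the observation that the acyclicity constraint forbids exactly the two monochromatic colourings of the triangle (because at most one triangle vertex can be a post by \cref{thm:linusson}(2c), so an admissible triangle-cycle would have to be monochromatic) matches the starting point of the paper's proof, and your count of six surviving colourings is correct. However, the heart of the argument --- the reduction to a strictly smaller graph --- is left as a plan rather than carried out, and the reduction you sketch is doubtful. You propose deleting the edge $ab$ and compensating with ``a suitable update of the post set $T'$,'' conditioning on the four colour classes of the two chords; but you never specify $T'$, and it is unclear that any post update can simulate the lost edge: in the classes where the chords disagree, $ab$ is free and carries genuine connectivity information (an admissible path can enter the triangle at $a$ and leave at $b$ along $ab$ in either colour), so a $2$-to-$1$ map onto colourings of $G\setminus\{ab\}$ does not obviously preserve the events $u\leftrightarrow_{RR}v$ and $u\leftrightarrow_{RB}v$. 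You flag this yourself as ``the hard part,'' which is precisely the part a proof must contain.

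The paper instead conditions on which of the three triangle edges is the odd one out (three pairs of colourings) and in each case \emph{contracts} an edge rather than deleting one; contraction preserves the admissible-path structure locally, so each conditional measure is identified with the measure on a smaller graph and minimality applies, exactly as in the degree-$2$ reduction in the proof of \cref{thm:linusson}(b). Note also that your preliminary geometric analysis via \cref{thm:linusson}(2d)--(2e) to locate the triangle on the outerplanar boundary is unnecessary: the paper's argument is purely local to the triangle and never uses outerplanarity. As it stands, your proposal identifies a reasonable strategy but omits its decisive step.
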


\begin{proof}
Suppose we have a triangle $xyz$. There are aprori eight possible colourings of the three edges $xy, yz, zx$, but the acyclicity condition rules out the two cases when all three edges have the same colours. We now have three sets of pairs of configurations, the first being the case when $xy$ has a different colour from $yz$ and $xz$, and the other two cases being similar. In all these cases, we can contract an edge (the $xy$ edge in this example) to reduce the problem to a smaller graph. For details, we refer the reader to~\cite{linusson_2011}.
\end{proof}

\begin{lemma}
\label{lem:first face}
{Let $G$ be an outerplanar graph with $T$ being a set of posts, and $u, v$ vertices of $G$. Suppose that $G$  is a minimal counterexample to the alternate bunkbed conjecture \eqref{alt bunkbed forests}. Then the face containing $u$ must be of the special form given in \cref{fig:firstface}. }
\end{lemma}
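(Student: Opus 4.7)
We combine the constraints already extracted for a minimal counterexample: $\deg(u) = 2$ with neighbours $x, y$ (\cref{thm:linusson}(e)), no adjacent pair of posts (\cref{thm:linusson}(c)), no non-post vertex of degree $2$ (\cref{thm:linusson}(b)), every chord separates $u$ and $v$ (\cref{thm:linusson}(e)), and no triangle (\cref{prop:triangle}). Without loss of generality, $x$ lies on the upper and $y$ on the lower boundary path of \cref{thm:linusson}(d), and by the no-triangle condition $xy \notin E(G)$, so the internal face $F$ of $u$ has boundary $u, x, w_1, \ldots, w_{\ell}, y, u$ with $\ell \ge 1$.

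The heart of the argument is showing $\ell = 1$, so that $F$ is a $4$-cycle $u-x-z-y-u$ with $z = w_1$. Suppose instead that $\ell \geq 2$; then $w_1$ is incident to $x$ and $w_2$ along the boundary of $F$. Since an outerplanar graph has no interior vertices, any further edge at $w_1$ would be a chord, and by \cref{thm:linusson}(e) it would separate $u$ from $v$. By \cref{thm:linusson}(b), either $w_1$ is a post or such a chord exists. In each case the plan is to eliminate the local configuration by a deletion/contraction of the same flavour as the one used in the proof of \cref{prop:triangle}: one conditions the alternate measure on the colours of the two edges of $F$ incident to $w_1$, treats the ``same colour'' and ``different colour'' cases separately, and in each case either contracts an edge (safe because we are conditioning on the forest-compatibility event) or deletes an unused edge. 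This produces a strictly smaller graph on which the same imbalance $\altforest{\cdot}{\cdot}(u \leftrightarrow_{RR} v) - \altforest{\cdot}{\cdot}(u \leftrightarrow_{RB} v)$ reassembles linearly, contradicting minimality.

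With the $4$-cycle structure in hand, the remaining task is to show that at least one of $x, y$ belongs to $T$. If both were non-posts then by \cref{thm:linusson}(b) both have degree $\geq 3$, so each has an additional chord leaving $F$; by \cref{thm:linusson}(e) these chords separate $u$ from $v$. Conditioning $\altforest{G}{T}$ on the colours of the four edges of the face $F$ and applying the acyclicity constraint (no face $F$ is entirely monochromatic along an admissible cycle) decomposes the measure into a finite list of smaller instances obtained by deleting or contracting edges incident to $x$ and $y$. Each smaller instance satisfies \eqref{alt bunkbed forests} by minimality, and the linear combination reassembles \eqref{alt bunkbed forests} on $G$, a contradiction. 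Hence, up to swapping the upper and lower paths, $x \in T$, which gives exactly the configuration of \cref{fig:firstface}.

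The main obstacle will be the case analysis needed to exclude $\ell \ge 2$: for every type of extra structure at $w_1$ (post versus chord, chord landing on the upper versus the lower path, etc.) one must exhibit a reduction that is compatible with the arboreal gas acyclicity condition, i.e.\ that neither creates nor destroys an inadmissible monochromatic cycle, so that the reduced graph is genuinely a smaller counterexample and minimality can be invoked.
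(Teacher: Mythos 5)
Your proposal does not close the two claims that actually constitute the lemma. Both the exclusion of $\ell \ge 2$ and the claim that one of $u$'s neighbours on the face is a post are presented as ``plans'' for new deletion/contraction reductions whose case analysis you explicitly defer (``the main obstacle will be the case analysis needed to exclude $\ell \ge 2$''). That deferred case analysis is the entire content of the lemma, so as written this is an outline, not a proof. The proposed reductions are also dubious in the cases where the offending vertex is a post: the degree-two reduction justified in \cref{thm:linusson}(2b) applies only to \emph{non-post} vertices of degree $2$, and contracting or deleting an edge incident to a post changes the set of posts and hence the set of admissible paths, so you cannot invoke it ``in the same flavour'' without a new argument that the imbalance reassembles linearly and that acyclicity is preserved.

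The paper's proof avoids all of this: no new reduction or appeal to minimality is needed beyond what \cref{thm:linusson} and \cref{prop:triangle} already provide. The face containing $u$ is bounded by initial segments of the two boundary paths of \cref{thm:linusson}(2d) plus one closing chord. Any interior vertex of these segments has degree $2$ (a chord emanating from it would not separate $u$ from $v$, contradicting (2e)), hence must be a post by (2b); no two such posts can be adjacent by (2c); and by (2f) and (2g) posts cannot occur on both segments, since together they would form a cutset separating $u$ from $v$. If neither segment carries a post the face degenerates to a triangle, excluded by \cref{prop:triangle}; otherwise exactly one segment carries exactly one post, adjacent to $u$, and the other segment is a single edge, which is precisely \cref{fig:firstface}. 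You should replace your reduction ``plans'' with this direct structural deduction, or else actually carry out and verify the reductions you invoke.
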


\begin{proof}
Without loss of generality, we can assume that the face is of the form given in \cref{fig:quadrilateral}.
\begin{figure}[ht]
\centering 
\begin{tikzpicture}
  \node[circle, fill=black, label=above:$u$, inner sep=2pt] (u) at (0,0) {};
      \node[circle, fill=black, label=right:$v$, inner sep=2pt] (v) at (6.05,0) {};
  \node[circle, fill=black, inner sep=0pt] (x) at (2,1.73) {};
  \node[circle, fill=black, label=left:$y$, inner sep=2pt] (y) at (2,-1.73) {};
  \node[circle, fill=black, label=above:$z$, inner sep=2pt] (z) at (5,1.73) {};
  \node[circle, fill=black, label=right:$ $, inner sep=0pt] (b) at (5,-1.73) {};
\draw[dotted] (u) -- (x) -- (z) -- (y);
\draw[dotted] (y) -- (u);
    \draw[dotted] (b) -- (y);
    \draw[dotted, thick] (z) to[out=0,in=0] (b);
\end{tikzpicture}
\caption{Most general structure of the face containing $u$.}
\label{fig:quadrilateral}
\end{figure}
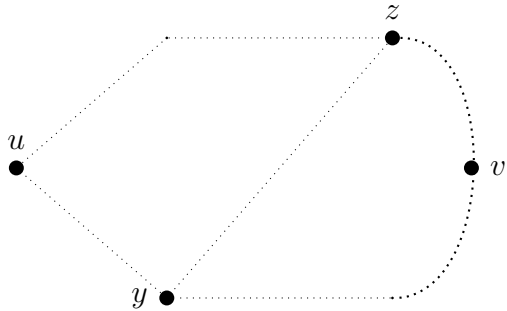

Let $x_1, \dots, x_k$ be the nodes on the path from $u$ to $z$ (call this the \textit{top path}), and $y_1, \dots, y_\ell$ those on the path from $u$ to $y$ (call this the \textit{bottom path}). Statements \cref{thm:linusson}(2f) and (2g) show that one cannot have post vertices in both the top and bottom paths. If neither of them have posts, then the face must be a triangle $uzy$, which is ruled out by \cref{prop:triangle}. Without loss of generality, let us assume that there is a post in the set $\{x_1, ..., x_k, z\}$. \cref{thm:linusson}(2c) and (2d) force us to concede that $k = 1$ and that $x_1$ is a post. This in turn mandates that $\ell = 0$ and we get that the face is of the form in \cref{fig:firstface}.
\end{proof}

\cref{lem:first face} can now be used to prove the following. 

\begin{theorem}
\label{thm:onepostcorrelation}
Let $G$ be an outerplanar graph. Then the {alternate} bunkbed conjecture for forests on $(\tG, T)$ is true whenever $T$ has at most one vertex.
\end{theorem}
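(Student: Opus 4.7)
The plan is a proof by contradiction, leveraging the minimal-counterexample machinery of \cref{thm:linusson} together with the face-structure result \cref{lem:first face}. First dispose of the case $|T| = 0$: without any posts, every admissible path is forced to be monochromatic, so the event $u \leftrightarrow_{RB} v$ would demand an impossible colour change and has probability zero, giving the inequality immediately.

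Assume $|T| = 1$, write $T = \{t\}$, and suppose $(G, T)$ is a minimal counterexample. Apply \cref{lem:first face} at $u$: the face containing $u$ must be a quadrilateral with vertices $u, x, z, y$ (cyclically) and $x$ a post, so $x = t$ and $t$ is adjacent to $u$. Since the alternate inequality is invariant under interchanging $u$ and $v$ (paths are reversible and the measure is symmetric under swapping the two colours), the same lemma applied to the face containing $v$ forces $t$ adjacent to $v$ as well. By \cref{thm:linusson}(2e), every chord separates $u$ and $v$, so no chord can have $u$ or $v$ as an endpoint; hence the edges $ut$ and $tv$ must both be outer-boundary edges, and the top boundary path from $u$ to $v$ collapses to the length-two path $u, t, v$.

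Let the bottom boundary path be $u = b_0, b_1, \dots, b_{k+1} = v$. By \cref{prop:triangle} we need $k \geq 1$, otherwise $utv$ would be a triangle. Focus on $b_1$: it lies outside $\{u, v, t\}$, so it is a non-post interior vertex governed by \cref{thm:linusson}(2b). Its two boundary edges go to $u$ and to $b_2$ (or to $v$ when $k = 1$), contributing exactly degree $2$. Any additional chord at $b_1$ must, by \cref{thm:linusson}(2e), land on the top path at a vertex other than $u$ or $v$, forcing it to be the edge $b_1 t$; but this edge, together with $ub_1$ and $ut$, forms a triangle, contradicting \cref{prop:triangle}. Hence $b_1$ has degree exactly $2$, contradicting \cref{thm:linusson}(2b) and the minimality assumption.

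The main obstacle is coordinating the two applications of \cref{lem:first face}: with only one post available, the lemma at $u$ and at $v$ are both compelled to select the same $t$, which is precisely what makes the top path collapse to $u,t,v$ and then produces the inevitable triangle obstruction at the first bottom vertex $b_1$.
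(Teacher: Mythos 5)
Your argument is correct, and it follows the paper's strategy up to and including the double application of \cref{lem:first face}: minimal counterexample, the single post $t$ forced to be adjacent to both $u$ and $v$ (the symmetry in $u,v$ you invoke is legitimate, since $u\leftrightarrow_{RB}v$ and $v\leftrightarrow_{RB}u$ have equal probability under the colour swap), hence one boundary path equal to $u,t,v$. Where you diverge is the endgame. The paper reduces the minimal counterexample to the two explicit $4$-cycles of \cref{fig:singlepost} and then verifies the inequality there directly by enumerating the $14$ admissible red--blue colourings; you instead close the argument purely structurally, showing that the first internal vertex $b_1$ of the other boundary path must have degree $2$ — its only chord candidate compatible with \cref{thm:linusson}(2e) is $b_1t$, which would create the triangle $ub_1t$ forbidden by \cref{prop:triangle} — contradicting \cref{thm:linusson}(2b). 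Your route buys two things: it avoids the explicit computation entirely, and it treats a bottom path of arbitrary length uniformly, so you never need the (somewhat terse) step in the paper asserting that the counterexample collapses to exactly those two $4$-cycles. It is worth noting that the paper's own candidate graphs each contain a non-post vertex of degree $2$, so your observation in effect shows that the final enumeration is already subsumed by the structural constraints; the paper's computation serves as an independent sanity check rather than a logically necessary step.
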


\begin{proof}
Let $(G_{0}, T_0)$ be a minimal counterexample and let $u, v$ be the witness vertices for the counterexample. 
Note that $T_0$ has size at most $1$ by \cref{thm:linusson}(2). 
If $T_0$ has size zero, we are done -- there is no path from $u_1$ to $v_2$.

By the preceding arguments, the face containing $u$ must be a quadrilateral containing the post. Now, the same argument holds for the face containing $v$ as well. Since there is a single post, we must necessarily have that this minimal counterexample looks like one of the two graphs in \cref{fig:singlepost}.

\begin{figure}[ht]
\centering 
\begin{tikzpicture}
  \node[circle, fill=black, label=above:$u$, inner sep=2pt] (u) at (0,0) {};
      \node[circle, fill=black, label=right:$v$, inner sep=2pt] (v) at (4.05,0) {};
  \node[circle, fill=black, label=left:$x$, inner sep=2pt] (x) at (2,1.73) {};
  \node[circle, fill=black, label=left:$y$, inner sep=2pt] (y) at (2,-1.73) {};
\draw (2,1.73) circle (6pt);
\draw (u) -- (x) -- (v) -- (y);
\draw (y) -- (u);
\end{tikzpicture}
\hspace{0.3in}
\begin{tikzpicture}
  \node[circle, fill=black, label=above:$u$, inner sep=2pt] (u) at (0,0) {};
      \node[circle, fill=black, label=right:$y$, inner sep=2pt] (y) at (4.05,0) {};
  \node[circle, fill=black, label=left:$x$, inner sep=2pt] (x) at (2,1.73) {};
  \node[circle, fill=black, label=left:$v$, inner sep=2pt] (v) at (2,-1.73) {};
\draw (2,1.73) circle (6pt);
\draw (u) -- (x) -- (y) -- (v);
\draw (v) -- (u);
\end{tikzpicture}
\caption{Structure of the minimal counterexample for \cref{thm:onepostcorrelation}.}
\label{fig:singlepost}
\end{figure}
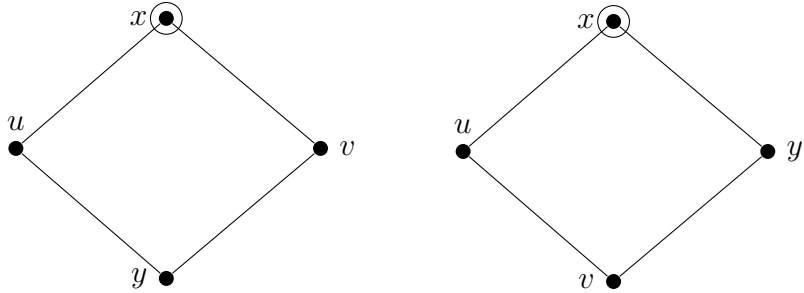

There are 14 possible red-blue colourings under $\altforest{G_0}{T_0}$.
A direct calculation shows that for the left figure, 
\[
\frac{6}{14}  = \altforest{G_0}{T_0}(u \leftrightarrow_{RR} v) > \altforest{G_0}{T_0}(u \leftrightarrow_{RB} v) = \frac{4}{14},
\]
and for the right figure,
\[
\frac{8}{14}  = \altforest{G_0}{T_0}(u \leftrightarrow_{RR} v) > \altforest{G_0}{T_0}(u \leftrightarrow_{RB} v) = \frac{2}{14}.
\]
Therefore, neither of them are counterexamples, giving a contradiction.
\end{proof}

This has the following interesting consequence. A little thought shows that if the bunkbed conjecture for forests is true for outerplanar graphs with a single post, one has the following correlation inequality. 

\begin{corollary}
\label{cor:corr ineq}
Let $G$ be an outerplanar graph, and $u, v, t$ be vertices in $G$. Then 
\[
\forest{G}(u \leftrightarrow v) \geq \forest{G}(u \leftrightarrow t) \, \forest{G}(t \leftrightarrow v).
\]
\end{corollary}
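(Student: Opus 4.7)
The plan is to derive the inequality in Corollary~\ref{cor:corr ineq} from \cref{thm:onepostcorrelation} by a double-copy argument. First, take two independent samples $F_1, F_2 \sim \forest{G}$. By independence,
\[
\forest{G}(u\leftrightarrow v) = \Pr\bigl[u\leftrightarrow v \text{ in } F_1\bigr], \qquad \forest{G}(u\leftrightarrow t)\,\forest{G}(t\leftrightarrow v) = \Pr\bigl[u\leftrightarrow t \text{ in } F_1 \text{ and } t\leftrightarrow v \text{ in } F_2\bigr].
\]
I would then view $(F_1, F_2)$ as a random forest on the graph $G' := G \cup_t G$ obtained by gluing two copies of $G$ at $t$, which is again outerplanar (place one outerplanar drawing of $G$ on each side of $t$). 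Since $t$ is a cut vertex of $G'$, every cycle of $G'$ lies entirely inside a single copy, so forests on $G'$ factor as independent pairs and $(F_1, F_2) \sim \forest{G'}$. Writing $u_R, v_R, u_B, v_B$ for the two copies of $u, v$, a simple-path analysis shows that $u_R\leftrightarrow v_R$ in the forest iff $u\leftrightarrow v$ in $F_1$, and $u_R\leftrightarrow v_B$ iff $u\leftrightarrow t$ in $F_1$ and $t\leftrightarrow v$ in $F_2$. Hence the corollary is equivalent to the bunkbed-type inequality
\[
\forest{G'}(u_R \leftrightarrow v_R) \;\geq\; \forest{G'}(u_R \leftrightarrow v_B).
\]

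The key observation is that $G'$ is precisely the bunkbed graph $\tG_{\{t\}}$ with its unique post edge contracted, so $\forest{G'}$ is $\forest{\tG_{\{t\}}}$ conditioned on the post edge being in the forest. Equivalently, the inequality above is the limit of the ordinary forest bunkbed inequality on $\tG_{\{t\}}$ as the weight of the post edge tends to infinity (forcing it into the forest).

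To establish this, I would invoke \cref{thm:onepostcorrelation} together with the implication ``alternate bunkbed $\Rightarrow$ regular bunkbed'' from \cref{sec:alt bunkbed}. The minimal-counterexample argument proving \cref{thm:onepostcorrelation}---ruling out triangles (\cref{prop:triangle}), forcing the face at $u$ into the form of \cref{fig:firstface}, and finally reducing to the two small configurations in \cref{fig:singlepost}---should extend to the $\lambda$-weighted arboreal gas on $\tG_{\{t\}}$ with the post forced in, because the structural reductions (contracting or deleting edges, conditioning on the colour states at degree-$2$ non-post vertices) interact cleanly with the arboreal gas measure, sending it to the arboreal gas on a smaller graph. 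The two base cases can then be verified by direct computation for $\forest{G'}$.

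The main obstacle I anticipate is precisely this last step: transferring the minimal-counterexample argument of \cref{thm:onepostcorrelation}, originally carried out for the uniform measure on alternate red-blue colourings of $E(G)$, to the $\lambda$-weighted arboreal gas $\forest{G'}$. The structural reductions should transfer cleanly because they preserve the arboreal gas structure, but the small base cases in \cref{fig:singlepost} must be re-verified for $\forest{G'}$ with general $\lambda$ to ensure the inequality remains valid throughout.
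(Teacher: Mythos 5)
Your reduction is exactly the route the paper intends: \cref{cor:corr ineq} is the single-post forest bunkbed inequality for $\tG_{\{t\}}$, unpacked via the gluing $G' = G \cup_t G$ and the factorization of $\forest{G'}$ into two independent copies of $\forest{G}$ across the cut vertex $t$; your path analysis for $u_R \leftrightarrow v_R$ versus $u_R \leftrightarrow v_B$ is correct. The only thing to tighten is the ``main obstacle'' you flag at the end, which in fact dissolves: you do not need to re-run the minimal-counterexample analysis of \cref{thm:onepostcorrelation} for the $\lambda$-weighted measure, nor re-verify the base cases of \cref{fig:singlepost} for general $\lambda$. Instead, condition on the tripartition recording, for each horizontal pair $\{e_1, e_2\}$ of $\tG_{\{t\}}$, whether zero, one, or both copies are present (as in \cref{sec:pto1}). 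Every forest configuration compatible with a fixed tripartition has the same number of edges, so the conditional measure is uniform and is precisely the alternate forest model $\altforest{\pi(S)}{\pi(T)}$ on the minor of $G$ obtained by deleting the absent pairs and contracting the doubled ones; tripartitions admitting no compatible forest contribute zero to both sides. Outerplanarity is closed under minors and the image post set still has at most one element, so \cref{thm:onepostcorrelation} applies verbatim to every conditional slice, and summing over tripartitions yields the bunkbed inequality on $\tG_{\{t\}}$ for every $\lambda$, hence the corollary.
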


The inequality in \cref{cor:corr ineq} is well known to hold in the case of percolation. Indeed, we have that 
\[
\perc{G}(u \leftrightarrow v) \geq \perc{G}(u \leftrightarrow t, \, t \leftrightarrow v) \geq \perc{G}(u \leftrightarrow t) \, \perc{G}(t \leftrightarrow v),
\]
where the second inequality is the Harris inequality given in a more general form in \cref{thm:harris ineq}. 
The latter inequality also holds for the random cluster measure $\rc{G}$ with $q \geq 1$ due to the FKG inequality~\cite[Theorem~(3.8)]{grimmett-2006}. 
Such arguments are invalid in the regime $q < 1$, and that includes the case of the arboreal gas measure. But it is interesting that a weaker inequality does hold, at least for outerplanar graphs. We will have more to say about this in \cref{sec:corr}.

\subsection{Outerplanar graphs with two posts}
We will now turn to proving the following stronger theorem.

\begin{theorem}
\label{thm:twopostcorrelation}
Let $G$ be an outerplanar graph. Then, the bunkbed conjecture on $(\tG, T)$ with the arboreal gas measure is true whenever the set of posts $T$ has size $2$.
\end{theorem}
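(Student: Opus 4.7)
The plan is to mirror and extend the minimal counterexample analysis used for \cref{thm:onepostcorrelation}. By the equivalence discussed in \cref{sec:alt bunkbed}, it suffices to refute the existence of a counterexample $(G_0, T_0)$ to the alternate bunkbed conjecture for forests (\cref{conj:alt forest}) on an outerplanar graph with $|T_0| = 2$. Assume such a minimal (in the number of edges) counterexample exists; then \cref{thm:linusson} guarantees that $(G_0, T_0)$ satisfies all the structural constraints (a)--(g), since the reductions in that theorem apply verbatim to the arboreal gas measure.

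\textbf{Structure at $u$ and $v$.} Apply \cref{lem:first face} at both $u$ and $v$: the face containing $u$ is a quadrilateral $u\,x\,z\,y$ with $x \in T_0$, and the face containing $v$ is a quadrilateral $v\,x'\,z'\,y'$ with $x' \in T_0$. Write $T_0 = \{t_1, t_2\}$. Since $|T_0| = 2$, we are in one of two cases: either $x = x'$, so both extremal faces share a single post while the other post lies in the interior, or $\{x, x'\} = T_0$, so the two posts are the ``outer'' posts adjacent to $u$ and $v$ respectively. In both cases, condition (f) ensures $u, v \notin T_0$, condition (c) forbids $t_1$ and $t_2$ from being joined by an edge, and condition (g) prevents $\{t_1, t_2\}$ from being a cutset separating $u$ from $v$.

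\textbf{Pinning down the interior.} The next step is to show that the interior structure of $G_0$ between the two extremal faces is extremely limited. By (b), every non-post vertex other than $u, v$ has degree $\geq 3$, so lies on some chord. By (e), every chord separates $u$ from $v$, and together with the absence of triangles (\cref{prop:triangle}) this restricts the possible chord patterns and gluings of faces. Using (g) to forbid $\{t_1,t_2\}$ from being a cutset and combining with the two-path structure from (d), a case analysis shows that the interior reduces to a short chain of quadrilateral or pentagonal faces glued along chords whose endpoints are forced by the above constraints. The result is that the minimal counterexample must be one of a finite explicit list of small outerplanar graphs, enumerated according to the two cases above and the placement of the second post (on the top path, on the bottom path, or as the shared post of two adjacent interior faces).

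\textbf{Finishing and the main obstacle.} For each of the finitely many candidate configurations produced in the previous step, directly compute $\altforest{G_0}{T_0}(u \leftrightarrow_{RR} v)$ and $\altforest{G_0}{T_0}(u \leftrightarrow_{RB} v)$ by enumerating red-blue edge colourings of $G_0$ in which no cycle admits an admissible (only changing colour at a post) traversal, and verify in each case that the first probability dominates the second, thereby contradicting the assumption. The main obstacle is the case analysis itself: with two posts, in contrast to the single-post situation where only two skeletal shapes arise, one must rule out a richer family of interior arrangements, and the delicate step is showing that the constraints in \cref{thm:linusson} are strong enough to truncate the interior to a bounded-size graph so that the direct enumeration becomes feasible. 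Once the finite list is in hand, the verification is mechanical.
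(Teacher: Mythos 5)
Your setup (pass to the alternate model, take a minimal counterexample, invoke \cref{thm:linusson}, \cref{prop:triangle} and \cref{lem:first face} at both $u$ and $v$, and split according to whether the two posts are the two ``outer'' posts or share one) matches the paper. The gap is in your third and fourth steps. The structural constraints do \emph{not} truncate the candidate counterexamples to a finite list: what survives the reductions is an \emph{infinite} family, namely the two shapes of \cref{fig:twopost1} in which the two extremal quadrilateral faces are joined by a pair of boundary paths of arbitrary length $n\ge 1$. Your phrase ``a short chain of quadrilateral or pentagonal faces glued along chords'' is exactly this family, and nothing in conditions (a)--(g) bounds the length of the chain (they control degrees, adjacency of posts, and which vertices are posts, but not the number of interior faces). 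Consequently ``direct enumeration of red-blue colourings for each candidate'' cannot be carried out; you need an argument that is uniform in $n$, and that argument is the actual content of the theorem, which your proposal defers to ``mechanical verification.''

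Concretely, the paper finishes as follows. Writing the difference of connection counts as a sum of three differences and cancelling one pair by red-blue symmetry, it remains to control (i) the ``both RR and BB path exist'' versus ``both RB and BR path exist'' terms, and (ii) the terms where $u_1,u_2,v_1,v_2$ lie in three components. For (i) one computes exactly: in the first graph the RR-and-BB configurations number $2^n$ while the RB-and-BR configurations number $2$ (and in the second graph the latter class is empty because the two crossing paths cannot coexist in an outerplanar drawing). For (ii) one constructs an explicit injection from the bad configurations into the good ones: if the unique admissible $u$-to-$v$ path starts with the edge at the post adjacent to $u$, flip the colours of all other edges; otherwise flip the colour of the edge at the post adjacent to $v$; one then checks this preserves acyclicity and the component structure. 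Neither the exact count nor the injection appears in your proposal, and without them (or a substitute that works for all $n$ simultaneously) the proof does not close.
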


\begin{proof}
From \cref{prop:triangle} we know that a minimal counterexample cannot contain any triangles and \cref{thm:linusson}(c) shows that any vertex of degree two must be a post. Thus, there must be a 4-cycle with $u$ with one post as a neighbor; similarly for $v$. With only two posts a minimal counterexample must be one of the two graphs $G$ and $H$ shown in \cref{fig:twopost1}. The lower and upper paths in both graphs have $n$ edges for some $n \geq 1$. 

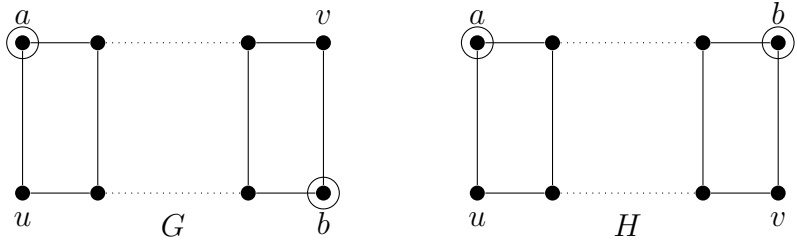
\begin{figure}[ht]
\centering 
\begin{tikzpicture}

 \node[fill=black, circle, inner sep=2pt, label=above:$a$] (a) at (-2,2) {};
 \draw (-2,2) circle (6pt);

  \node[fill=black, circle, inner sep=2pt, label=below:$u$] (u) at (-2,0) {};
   \node[fill=black, circle, inner sep=2pt] (e) at (-1,2) {};
  \node[fill=black, circle, inner sep=2pt] (f) at (-1,0) {};
    \node[fill=black, circle, inner sep=2pt, label=below:$b$] (b) at (2,0) {};
    \draw (2,0) circle (6pt);

  \node[fill=black, circle, inner sep=2pt] (y) at (1,0) {};
  \node[fill=black, circle, inner sep=2pt, label=above:$v$] (v) at (2,2) {};
  \node[fill=black, circle, inner sep=2pt] (x) at (1,2) {};
      \node[label=below:$G$] (G) at (0,0) {};

\draw (a) -- (e) -- (f) -- (u) -- (a);
\draw (v) -- (b) -- (y) -- (x) -- (v);
\draw[dotted] (e) -- (x);
\draw[dotted] (f) -- (y);
        \end{tikzpicture}
        \hspace{0.5in}
        \begin{tikzpicture}

 \node[fill=black, circle, inner sep=2pt, label=above:$a$] (a) at (-2,2) {};
 \draw (-2,2) circle (6pt);

  \node[fill=black, circle, inner sep=2pt, label=below:$u$] (u) at (-2,0) {};
   \node[fill=black, circle, inner sep=2pt] (e) at (-1,2) {};
  \node[fill=black, circle, inner sep=2pt] (f) at (-1,0) {};
    \node[fill=black, circle, inner sep=2pt, label=below:$v$] (v) at (2,0) {};

  \node[fill=black, circle, inner sep=2pt] (y) at (1,0) {};
  \node[fill=black, circle, inner sep=2pt, label=above:$b$] (b) at (2,2) {};
      \draw (2,2) circle (6pt);

  \node[fill=black, circle, inner sep=2pt] (x) at (1,2) {};
      \node[label=below:$H$] (H) at (0,0) {};
\draw (a) -- (e) -- (f) -- (u) -- (a);
\draw (v) -- (y) -- (x) -- (b) -- (v);
\draw[dotted] (e) -- (x);
\draw[dotted] (f) -- (y);
        \end{tikzpicture}
    \caption{Two possible counterexamples in the proof of \cref{thm:twopostcorrelation}.}
    \label{fig:twopost1}
    \end{figure}

The notation $x_1, x_2$ has been used for the two copies of the vertex $x$ in the bunkbed graph. Now we will extend this notation to the red-blue situation to mean a path starting at $x$ with red and blue, respectively. We will use $[x_1y_2\mid z_1]_\ge$ to mean the number of configurations such that there is a admissible path starting in a red edge from $x$ and ending with a blue edge at $y$, but there is no admissible path starting in a red edge from $x$ and ending in $z$ with a red edge. {The subscript $\ge$ indicates that there is no restriction on the number of components in the red-blue coloured graph. 
By considering the relative positions of $u_2$ and $v_2$ in the coloured graph,
we get
\[
[u_1 v_1]_\ge = [u_1v_1u_2\mid v_2]_\ge + [u_1v_1v_2\mid u_2]_\ge + [u_1v_1\mid u_2v_2]_\ge + [u_1v_1\mid u_2\mid v_2]_\ge,
\]
and similarly for $[u_1 v_2]_\ge$. Therefore, we get}
\begin{align*}
[u_1v_1]_\ge-[u_1v_2]_\ge = &[u_1v_1u_2\mid v_2]_\ge-[u_1v_2u_2\mid v_1]_\ge\\
&+ ([u_1v_1\mid u_2v_2]_\ge-[u_1v_2\mid u_2v_1]_\ge) +
([u_1v_1\mid u_2\mid v_2]_\ge-[u_1v_2\mid u_2\mid v_1]_\ge).
\end{align*}
We must show that this quantity is non-negative in both graphs.
By red-blue symmetry, the first two quantities on the right hand side are the same and will cancel each other. 
Therefore we get
\begin{align}
\label{diff}
[u_1v_1]_\ge-[u_1v_2]_\ge = ([u_1v_1\mid u_2v_2]_\ge-[u_1v_2\mid u_2v_1]_\ge)+([u_1v_1\mid u_2\mid v_2]_\ge-[u_1v_2\mid u_2\mid v_1]_\ge).
\end{align}

Let us first look at the first graph $G$. 
We can compute the first term on the right hand side of \eqref{diff}, $[u_1v_1\mid u_2v_2]_\ge-[u_1v_2\mid u_2v_1]_\ge$, exactly. For the first term there is a red-red path from $u$ to $v$ as well as a blue-blue path, the configuration must then be as in \cref{fig:RR and BB}.
We may colour the remaining edges arbitrarily. No cycles will emerge.  There are thus $2^{n-1}$ configurations of either sort and $2^n$ in all. 

\begin{figure}[ht]
\centering 
\begin{tikzpicture}

 \node[fill=black, circle, inner sep=2pt, label=above:$a$] (a) at (-2,2) {};
     \draw (-2,2) circle (6pt);

  \node[fill=black, circle, inner sep=2pt, label=below:$u$] (u) at (-2,0) {};
   \node[fill=black, circle, inner sep=2pt] (e) at (-1,2) {};
  \node[fill=black, circle, inner sep=2pt] (f) at (-1,0) {};
    \node[fill=black, circle, inner sep=2pt, label=below:$b$] (b) at (2,0) {};
        \draw (2,0) circle (6pt);

  \node[fill=black, circle, inner sep=2pt, label=below:$y$] (y) at (1,0) {};
  \node[fill=black, circle, inner sep=2pt, label=above:$v$] (v) at (2,2) {};
  \node[fill=black, circle, inner sep=2pt] (x) at (1,2) {};
\draw [red] (a) -- (e) -- (x) -- (v);
\draw  [blue] (b) -- (y) -- (f) -- (u);
\draw [red] (a) -- (u);
\draw [blue] (v) -- (b);
        \end{tikzpicture}
        \hspace{0.5in}
        \begin{tikzpicture}
 \node[fill=black, circle, inner sep=2pt, label=above:$a$] (a) at (-2,2) {};
     \draw (-2,2) circle (6pt);

  \node[fill=black, circle, inner sep=2pt, label=below:$u$] (u) at (-2,0) {};
   \node[fill=black, circle, inner sep=2pt] (e) at (-1,2) {};
  \node[fill=black, circle, inner sep=2pt] (f) at (-1,0) {};
    \node[fill=black, circle, inner sep=2pt, label=below:$b$] (b) at (2,0) {};
        \draw (2,0) circle (6pt);

  \node[fill=black, circle, inner sep=2pt, label=below:$y$] (y) at (1,0) {};
  \node[fill=black, circle, inner sep=2pt, label=above:$v$] (v) at (2,2) {};
  \node[fill=black, circle, inner sep=2pt] (x) at (1,2) {};
\draw [blue] (a) -- (e) -- (x) -- (v);
\draw  [red] (b) -- (y) -- (f) -- (u);
\draw [blue] (a) -- (u);
\draw [red] (v) -- (b);
        \end{tikzpicture}
        \caption{For the graph $G$, the possibilities when there is both a red-red and a blue-blue path from $u$ to $v$.}
        \label{fig:RR and BB}
        \end{figure}
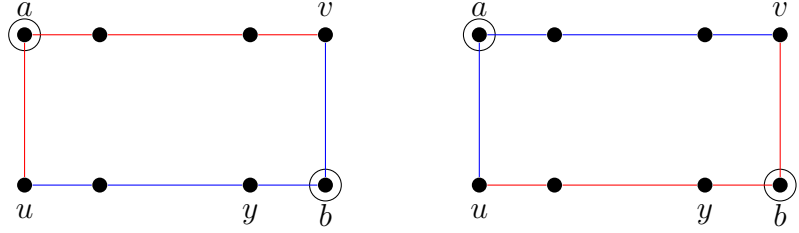
        
For the second term, namely $[u_1v_2\mid u_2v_1]_\ge$, the configurations are of one of the two sorts presented in \cref{fig:RB and BR}. 
If one more edge is red in the left configuration we would have a red-red path from $u$ to $v$, which is not allowed here so all the remaining edges are blue. Similarly all remaining edges in the right configuration are red. There are thus $2$ such configurations. The difference is $2^n - 2$, a non-negative number for all $n\ge 1$. Note that if there is a path between two vertices it must be unique otherwise there would be a cycle.

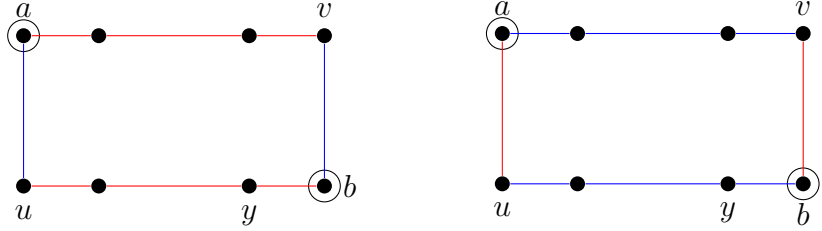
\begin{figure}[ht]
\centering 
\begin{tikzpicture}

 \node[fill=black, circle, inner sep=2pt, label=above:$a$] (a) at (-2,2) {};
     \draw (-2,2) circle (6pt);

  \node[fill=black, circle, inner sep=2pt, label=below:$u$] (u) at (-2,0) {};
   \node[fill=black, circle, inner sep=2pt] (e) at (-1,2) {};
  \node[fill=black, circle, inner sep=2pt] (f) at (-1,0) {};
    \node[fill=black, circle, inner sep=2pt, label=right:$b$] (b) at (2,0) {};
        \draw (2,0) circle (6pt);

  \node[fill=black, circle, inner sep=2pt, label=below:$y$] (y) at (1,0) {};
  \node[fill=black, circle, inner sep=2pt, label=above:$v$] (v) at (2,2) {};
  \node[fill=black, circle, inner sep=2pt] (x) at (1,2) {};
\draw [red] (a) -- (e) -- (x) -- (v);
\draw  [red] (b) -- (y) -- (f) -- (u);
\draw [blue] (a) -- (u);
\draw [blue] (v) -- (b);
        \end{tikzpicture}
        \hspace{0.5in}
        \begin{tikzpicture}
 
 \node[fill=black, circle, inner sep=2pt, label=above:$a$] (a) at (-2,2) {};
     \draw (-2,2) circle (6pt);
     \node[fill=black, circle, inner sep=2pt, label=below:$u$] (u) at (-2,0) {};
   \node[fill=black, circle, inner sep=2pt] (e) at (-1,2) {};
  \node[fill=black, circle, inner sep=2pt] (f) at (-1,0) {};
    \node[fill=black, circle, inner sep=2pt, label=below:$b$] (b) at (2,0) {};
        \draw (2,0) circle (6pt);

  \node[fill=black, circle, inner sep=2pt, label=below:$y$] (y) at (1,0) {};
  \node[fill=black, circle, inner sep=2pt, label=above:$v$] (v) at (2,2) {};
  \node[fill=black, circle, inner sep=2pt] (x) at (1,2) {};
\draw [blue] (a) -- (e) -- (x) -- (v);
\draw  [blue] (b) -- (y) -- (f) -- (u);
\draw [red] (a) -- (u);
\draw [red] (v) -- (b);
        \end{tikzpicture}
        \caption{For the graph $G$, the possibilities when there is both a red-blue and a blue-red path from $u$ to $v$.}
        \label{fig:RB and BR}
        \end{figure}

    Finally, we consider the last two terms on the right hand side of \eqref{diff}.
    We will give an injection from set of configurations contributing to the final term $[u_1v_2\mid u_2\mid v_1]_\ge$ into those contributing to the penultimate term, $[u_1v_1\mid u_2\mid v_2]_\ge$, to prove that their difference is non-negative as well.
    \begin{enumerate}
        \item If the unique red-blue path in $[u_1v_2\mid u_2\mid v_1]_\ge$ starts with the edge $ua$, flip the colours of all edges save $(u, a)$.
        \item If it does not, it must pass through $b$ since it must change color somewhere. Flip the colour of $(b, v)$ and keep all the other edges the same.
    \end{enumerate}
    Note that (1) also covers the case of a path passing through both $a$ and $b$.  We will show that this map is well-defined and an injection into configurations where there is a red - red configuration contributing to $[u_1v_1\mid u_2\mid v_2]_\ge$. 
    
    First, assuming the map is well-defined, we see that the map is an injection: In the first case, the unique path passes through $a$. In the second, it does not. Restricted to each of the two sets, the map is of course an injection: Thus, it is globally an injection as well. 
    
    We now complete the proof by showing the range of the map is $[u_1v_1 \mid u_2 \mid v_2]_\ge$. 
    We will focus on (1). The situation in (2) is very similar.
    First, we show that no cycles will emerge. 
    In (1) if a cycle emerged, it must use the edge $(u, a)$ and the cycle gives a red-red path (not using $(u, a)$) between $u$ and $a$. This means there was a blue-blue path to begin with. But $u_1, u_2$ are not connected, so this is impossible.
    Similarly, $u_1$ and $u_2$ are not connected after the map because, if they were, $u_1$ would have been in a cycle to begin with. By the same argument,
    $v_1$ and $v_2$ are also in different components after the map.
    Finally, in (1), the blue-blue path from $u$ to $v$ after applying the map must come from a red-red path from $u$ to $v$. But, there could have been no such path by assumption. 
    Thus, $u_2$ and $v_2$ are in different components after the map.

    This settles the inequality for $G$, and we note that this is not a bijection: The configurations that will not arise upon applying the map are those with a red-red path from $u$ to $v$ that does not pass through either $a$ or $b$.
        
    For the second graph $H$ in \cref{fig:twopost1}, there are no configurations of the form $[u_1v_2\mid u_2 v_1]_\ge$. This is because there is no room for the two red-blue and blue-red paths to cross. Thus the middle difference is nonnegative. To prove that the last difference $[u_1v_1\mid u_2\mid v_2]_\ge-[u_1v_2\mid u_2\mid v_1]_\ge\ge 0$ we can use the exact same injection as above. The arguments to prove that this is a well-defined injection is in fact the same word for word. 
\end{proof}

\section{Effective resistances and the arboreal bunkbed conjecture for forests with two components}
\label{sec:effec}

As mentioned in \cref{sec:background}, the arboreal gas measure as $\lambda \rightarrow \infty$ converges weakly to the uniform measure on spanning trees. 
The coefficient of the maximal degree term is the number of spanning trees which is the same in $\forest{G}(x\leftrightarrow y)$ independent of the vertices $x,y$. 
When $\lambda$ is very large it is therefore the coefficient of the second largest degree term that determines the inequality.
In this situation the bunkbed inequality is true and we state this as a theorem. 

\begin{theorem}
\label{thm:largel}
Let $G$ be a graph, $\tG$ its bunkbed graph, and $u, v$ vertices in $G$. 
For sufficiently large $\lambda$, we have that 
\[
\forest{\tG} (u_1 \leftrightarrow v_1) \geq 
\forest{\tG}(u_1 \leftrightarrow v_2).
\]
\end{theorem}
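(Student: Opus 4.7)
The plan is to exploit the fact that as $\lambda \to \infty$ the arboreal gas measure concentrates on spanning trees, all of which connect $u_1$ to both $v_1$ and $v_2$, so the difference of the two probabilities is driven by two-component spanning forests. Letting $n$ denote the number of vertices of $\tG$, the partition function expands as $Z = [\cdot]^{\tG}\lambda^{n-1} + [\cdot\mid\cdot]^{\tG}\lambda^{n-2} + O(\lambda^{n-3})$, while the numerator of $\forest{\tG}(u_1 \leftrightarrow v_i)$ is obtained by removing the $[u_1\mid v_i]^{\tG}$ two-component forests that place $u_1$ and $v_i$ in different components. A direct $1/\lambda$ expansion together with \cref{prop:pseudo} then yields
\[
\forest{\tG}(u_1 \leftrightarrow v_1) - \forest{\tG}(u_1 \leftrightarrow v_2) = \frac{R_{\tG}(u_1, v_2) - R_{\tG}(u_1, v_1)}{\lambda} + O(\lambda^{-2}).
\]
If $u$ and $v$ lie in different components of $G$, both probabilities vanish identically and the claim is trivial, so I may restrict to the component containing them and assume $G$ connected. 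The theorem is thereby reduced to the effective-resistance inequality $R_{\tG}(u_1, v_2) \geq R_{\tG}(u_1, v_1)$.

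For this resistance inequality I would use \cref{prop:bunkbed pseudoinv}. The vertical symmetry of $\tG$ gives $\tilde{L}^\dagger_{v_1, v_1} = \tilde{L}^\dagger_{v_2, v_2}$, so from \eqref{resistance distance} one has
\[
R_{\tG}(u_1, v_2) - R_{\tG}(u_1, v_1) = 2\bigl(\tilde{L}^\dagger_{u_1, v_1} - \tilde{L}^\dagger_{u_1, v_2}\bigr).
\]
The block formula for $\tilde{L}^\dagger$ shows that the upper-left and upper-right blocks agree in the $L^\dagger$ term and differ only in the sign of the $(L+2I)^{-1}$ term, so this difference equals $2(L+2I)^{-1}_{u,v}$. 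Everything now reduces to showing $(L + 2I)^{-1}_{u,v} \geq 0$.

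The matrix $L + 2I$ is a symmetric nonsingular M-matrix: its off-diagonal entries lie in $\{0, -1\}$, and writing $L + 2I = sI - B$ with $s = d_{\max} + 2$ and $B = d_{\max}I - L \geq 0$, the eigenvalues of $B$ are $d_{\max} - \lambda_i(L) \in [-d_{\max}, d_{\max}]$, so $\rho(B) = d_{\max} < s$. The Neumann series $(sI - B)^{-1} = s^{-1}\sum_{k \geq 0}(B/s)^k$ therefore converges entrywise to $(L + 2I)^{-1}$ with nonnegative entries, and strict positivity at $(u,v)$ follows because $B$ and $G$ share the same off-diagonal support, so connectedness of $G$ ensures that some power of $B$ has a positive $(u,v)$ entry. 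This gives $(L + 2I)^{-1}_{u,v} > 0$, hence a strictly positive coefficient of $\lambda^{-1}$, and the theorem follows for $\lambda$ large enough.

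The main obstacle I anticipate is keeping the asymptotic expansion clean: making sure that the $O(\lambda^{-2})$ remainder can be bounded uniformly, so that the sign of the leading $\lambda^{-1}$ coefficient genuinely dictates the sign of the full difference for all sufficiently large $\lambda$. Since both the numerator and denominator are explicit polynomials in $\lambda$ with the same leading term $[\cdot]^{\tG}\lambda^{n-1}$, this is a routine but slightly delicate long-division argument whose constants depend on the further spanning-forest counts. A more satisfying alternative would be a combinatorial injection establishing $[u_1 \mid v_1]^{\tG} \leq [u_1 \mid v_2]^{\tG}$ directly, giving a $\lambda$-free statement, but such an injection appears to be surprisingly nontrivial and I do not see an easy construction.
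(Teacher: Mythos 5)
Your proposal is correct and follows essentially the same route as the paper: expand in $1/\lambda$, reduce to $[u_1\mid v_1]\le[u_1\mid v_2]$ via \cref{prop:pseudo}, convert to the resistance inequality, apply \cref{prop:bunkbed pseudoinv} to reduce to $(L+2I)^{-1}_{u,v}\ge 0$, and conclude by the M-matrix property (which the paper cites while you rederive via the Neumann series). The only additions are your explicit handling of disconnected $G$ and the strict-positivity remark, neither of which changes the argument.
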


\begin{proof}
We need to show that the number of forests in $\tG$ with two components such that $u_1, v_2$ belong to different components is at least the number of forests with $u_1, v_1$ belong to different components. Suppose $G$ has $n$ vertices.
Using the notation from \cref{sec:intro},
\[
\forest{\tG}(u_1 \leftrightarrow v_1) = 
\frac{\lambda^{2n-1} [\cdot ] + \lambda^{2n-2}[u_1 v_1 \mid \cdot] + \cdots}
{\lambda^{2n-1} [\cdot ] + \lambda^{2n-2}[\cdot \mid \cdot] + \cdots},
\]
for large $\lambda$, and similarly for $\forest{\tG}(u_1 \leftrightarrow v_2)$. The denominator is the same in both cases. The desired inequality is equivalent to showing
$[u_1 v_1 \mid \cdot] \geq [u_1 v_2 \mid \cdot]$, which again is equivalent to showing
$[u_1 \mid v_1] \leq [u_1 \mid v_2]$.
Now, from \cref{prop:pseudo}, we need to show that 
\[
R_{\tG}(u_1, v_1) \leq R_{\tG}(u_1, v_2).
\]
Using the definition of the effective resistance in \eqref{resistance distance}, this is equivalent to showing that
\[ 
L^{\dagger}_{\tG}(u_1,v_1) \geq L^{\dagger}_{\tG}(u_1,v_2).
\]
From the structure of the bunkbed graph, the Laplacian can naturally be written as a block matrix in terms of $L_G$ as
\[
L_{\tG} = \left(
\begin{array}{c|c}
L_{G}+I &-I\\
\hline
-I & L_{G}+I
\end{array} \right),
\]
where $I$ is the identity matrix of appropriate size.
\cref{prop:bunkbed pseudoinv} shows that 
\[
L_{\tG}^{\dagger} = \dfrac{1}{2}\left(
\left(
\begin{array}{c|c}
L_{G}^{\dagger}&L_{G}^{\dagger}\\
\hline
L_{G}^{\dagger} & L_{G}^{\dagger}
\end{array} \right)
+
\left(
\begin{array}{c|c}
(L_G+2I)^{-1} & -(L_G+2I)^{-1}\\
\hline
-(L_G+2I)^{-1} & (L_G+2I)^{-1}
\end{array} \right)\right).
\]

The formula for the pseudoinverse of the Laplacian above shows that 
\[ 
L^{\dagger}_{\tG}(u_1,v_1) - L^{\dagger}_{\tG}(u_1,v_2) = (L_G+2I)^{-1}(u, v).
\]
We know that the matrix $L_{\tG}$ is positive semidefinite and thus so is $L_{\tG}+2I$. Additionally, this last matrix is an example of an \emph{$M$ matrix} -- a matrix whose eigenvalues have non-negative real part and whose off diagonal entries are non-positive. It is a well known fact that such matrices have inverses which are entry-wise non-negative, see \cite[Exercise~8.3.P15]{horn-johnson-1985}. Consequently, the last term is non-negative and we are done.
\end{proof}

\begin{remark}
With just a little more work, we can also prove the analogous result for the `bunkbed with posts' problem we considered in \cref{sec:outerplanar}. As before, fix a set of posts $T$ and let $S = V\setminus T$, where $V$ is the set of vertices of $G$. Let us write the Laplacian of $G$ as 
\[
L_G =\begin{blockarray}{c c c}
    & S & T \\
\begin{block}{c[cc]}
  S & L_G^{SS} & L_G^{ST} \\
  T & (L_G^{ST})^{T} & L_G^{TT} \\
\end{block}
\end{blockarray}
\]

We let $\tG_T$ now be the graph where there are two vertices for every vertex in $S$ (as usual when we deal with bunkbed graphs) but we contract the edge between $t_1$ and $t_2$ when $t \in T$. This graph thus has vertices indexed by $S \cup S \cup T$ with the natural edge relations. Now, let  $u, v$ be two vertices in $S$ (if one of more are in $T$, there is nothing to prove). A straightforward but tedious calculation shows that 
\[ 
L^{\dagger}_{\tG_T}(u_1,v_1) - L^{\dagger}_{\tG_T}(u_1,v_2) = (L_G^{SS})^{-1}(u, v).
\]
Once again, it is clear that $L_G^{SS}$ is both invertible and an $M$ matrix, and the final expression is therefore non-negative. 
\end{remark}

\section{Correlation Inequalities}
\label{sec:corr}

Recall that $\rc{G}$ is the random cluster measure on $G$. Suppose we have a bunkbed graph with a single post (say at the vertex $w$). The bunkbed conjecture for this case reduces to showing the correlation inequality 
\begin{equation}\label{weakest}
    \rc{G}(u \leftrightarrow v) \geq \rc{G}(u \leftrightarrow w)\, \rc{G}(w \leftrightarrow v).
\end{equation}
This is open even for the arboreal gas measure. 
We did show in \cref{thm:onepostcorrelation} that this does hold for outerplanar graphs, which is a positive sign. 
In the context of percolation (and indeed for any random cluster measure with $q \geq 1$), the Harris inequality says that we have the stronger statement
\begin{equation}\label{harris}
\perc{G} (u \leftrightarrow w, \, w \leftrightarrow v) \geq \perc{G} (u \leftrightarrow w)\, \perc{G} (w \leftrightarrow v).
\end{equation}

We believe that the arboreal gas version of this inequality also holds. 
\begin{conjecture}
Let $G = (V, E)$ be a graph and $u, v, w \in V$.
Then 
\begin{equation}
\label{harris2}
\forest{G} (u \leftrightarrow w, \, w \leftrightarrow v) \geq \forest{G} (u \leftrightarrow w)\, \forest{G} (w \leftrightarrow v).
\end{equation}
\end{conjecture}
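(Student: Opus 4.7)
The plan is to reformulate the inequality as a polynomial identity in $\lambda$ and attempt to prove it coefficient-by-coefficient. Let $\pi_1 = uwv$, $\pi_2 = uw|v$, $\pi_3 = u|wv$, $\pi_4 = uv|w$, and $\pi_5 = u|w|v$ denote the five partitions of $\{u, w, v\}$ induced by a forest's components, and write
\[
N(\pi) \;=\; \sum_{F \text{ inducing } \pi} \lambda^{|E(F)|},
\]
so that $\forest{G}(\pi) = N(\pi)/Z$ with $Z = \sum_F \lambda^{|E(F)|}$. The standard equivalence $\Pr(A \cap B) \geq \Pr(A)\Pr(B) \iff \Pr(A \cap B)\Pr(A^c \cap B^c) \geq \Pr(A \cap B^c)\Pr(A^c \cap B)$, applied with $A = \{u \leftrightarrow w\}$ and $B = \{w \leftrightarrow v\}$, rewrites the conjecture as the polynomial inequality
\[
N(\pi_1)\bigl[N(\pi_4) + N(\pi_5)\bigr] \;\geq\; N(\pi_2)\, N(\pi_3).
\]

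As a sanity check, I would first verify the inequality in two asymptotic regimes. As $\lambda \to \infty$, \cref{sec:effec} gives $\forest{G}(u \not\leftrightarrow v) = [u|v]/(\lambda [\cdot]) + O(\lambda^{-2})$; combined with the decompositions $[u|w] = [uv|w] + [u|wv]$ and $[w|v] = [uw|v] + [uv|w]$, a direct computation yields
\[
\forest{G}(u \leftrightarrow w \leftrightarrow v) - \forest{G}(u \leftrightarrow w)\, \forest{G}(w \leftrightarrow v) \;=\; \frac{[uv|w]}{\lambda[\cdot]} + O(\lambda^{-2}) \;\geq\; 0.
\]
As $\lambda \to 0$, the leading $\lambda$-power in $N(\pi_1)$ is the size of a minimum Steiner tree for $\{u,w,v\}$, while the leading $\lambda$-power in $N(\pi_2)N(\pi_3)$ is the sum of the length of the shortest $u$-$w$ path avoiding $v$ and the length of the shortest $w$-$v$ path avoiding $u$. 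The concatenation of any such pair of paths already connects $\{u,w,v\}$, so the left-hand side's leading power never exceeds the right-hand side's and the inequality holds. These checks are encouraging but leave open the difficult intermediate range of $\lambda$.

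The main plan for general $\lambda$ is to prove the polynomial inequality coefficient-by-coefficient via a weight-preserving injection from pairs $(G_1, G_2)$ of forests, with $G_1$ realising $\pi_2$ and $G_2$ realising $\pi_3$, to pairs $(F_1, F_2)$ with $F_1$ realising $\pi_1$ and $F_2$ realising $\pi_4$ or $\pi_5$. The edge-coloured multigraph $G_1 \sqcup G_2$ carries a unique $G_1$-path from $u$ to $w$ and a unique $G_2$-path from $w$ to $v$, and their union contains a connected subgraph spanning $\{u,w,v\}$. A natural construction places a spanning tree of this subgraph into $F_1$ and then redistributes the remaining edges of $G_1 \sqcup G_2$ between $F_1$ and $F_2$ by a canonical greedy procedure avoiding cycles, using a fixed edge ordering for disambiguation.

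The principal obstacle is ensuring this rewiring is well-defined, forest-valid, and genuinely injective. The two input paths can share vertices other than $w$, and distinct source pairs may collide in the target unless one tracks enough auxiliary data for inversion. A careful case analysis based on how the components of $G_1 \cup G_2$ meet $\{u,w,v\}$ will be required. An alternative, possibly cleaner, route is to first establish the Gladkov-type inequality $N(\pi_1)\,N(\pi_5) \geq N(\pi_2)N(\pi_3) + N(\pi_2)N(\pi_4) + N(\pi_3)N(\pi_4)$ for the arboreal gas, extending \cref{thm:gladkov} to the $q = 0$ regime. This inequality immediately implies the desired one and may be amenable to a matroid-theoretic proof exploiting the exchange axiom for forests.
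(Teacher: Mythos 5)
This statement is posed as an open conjecture in the paper; the authors give no proof, and only observe (via the discussion following \eqref{4pt ineq2} and \cref{cor:three-point}) that it holds for sufficiently large $\lambda$. Your proposal likewise does not constitute a proof. The reformulation as $N(\pi_1)\bigl[N(\pi_4)+N(\pi_5)\bigr]\geq N(\pi_2)N(\pi_3)$ is correct, and both asymptotic checks are essentially right (the large-$\lambda$ computation reproduces what the paper establishes; in the small-$\lambda$ check you should also handle the boundary case where the minimum Steiner tree size equals the sum of the two path lengths, which forces a comparison of leading coefficients rather than just exponents). But these checks only recover the regimes already covered, and the heart of your argument --- the weight-preserving injection from pairs realising $(\pi_2,\pi_3)$ to pairs realising $(\pi_1,\pi_4\cup\pi_5)$ --- is never constructed. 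The ``canonical greedy redistribution avoiding cycles'' is exactly where such attempts break down: one must simultaneously guarantee that both output graphs are forests, that they induce the required partitions of $\{u,w,v\}$, that the total edge count is preserved, and that the map is invertible; no case analysis is supplied, and this is precisely the difficulty that has kept this circle of problems (including the Kahn--Grimmett--Winkler conjecture \eqref{eqn:kgw}) open.

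Two further cautions. First, a weight-preserving injection would prove coefficient-wise domination of the two polynomials in $\lambda$, which is strictly stronger than the conjectured inequality for all $\lambda>0$; it may fail even if the conjecture is true, so the strategy carries an extra risk beyond mere incompleteness. Second, your ``alternative route'' via $N(\pi_1)N(\pi_5)\geq e_2\bigl(N(\pi_2),N(\pi_3),N(\pi_4)\bigr)$ is exactly the $\lambda$-version of \cref{cor:three-point}, which the paper itself only establishes for large $\lambda$ (as a consequence of \cref{thm:four-point-correlation} and the strengthened Rayleigh inequality \cref{lem:strong rayleigh}); invoking it for general $\lambda$ replaces one open problem with a harder one. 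In short: the proposal is a reasonable plan of attack with correct sanity checks, but it contains no proof of the statement, and the statement remains open.
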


This is too attractive a problem for us to resist saying something about it. When $\lambda$ is very large, this inequality is true for trivial reasons; see the discussion following \eqref{4pt ineq2}. We naturally investigated whether there are even stronger inequalities in the literature, and used them as a testbed for the arboreal gas measure $\forest{G}$ for $\lambda$ large. There is a well-known theorem due to Brooks--Smith--Stone--Tutte~\cite{BSST-1940} in this setting, which we now state. 

Suppose $G = (V, E)$ and $e, f, \in E$.
Orient the edges $e, f$ in an arbitrary but fixed manner. Consider the set of all connected subgraphs of $G$ with exactly $|V|$ edges. Any such graph will contain exactly one cycle. Let $X_{+}$ (resp. $X_-$) be the number of such subgraphs where the cycle contains both $e$ and $f$ and the edges are oriented in the same (resp. opposite) direction in that cycle. 
We will use $[e]$ to denote the number of spanning trees containing $e$ and $[e, f]$ to denote the number of spanning trees containing both $e$ and $f$. The expression $[\cdot ]$ will denote the total number of spanning trees as usual. 

\begin{theorem}[{\cite[Equation (2.34)]{BSST-1940}}]
\label{thm:bsst}
Let $e, f$ be edges in a graph $G$ and $X_\pm$ be defined as above. Then
\[
[e]\, [f] - [\cdot] \, [e, f] = (X_+ - X_{-})^2.
\]
\end{theorem}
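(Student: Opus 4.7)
The strategy is to route everything through the pseudoinverse Laplacian $L^\dagger$ and then recognize both $[e][f]-[\cdot][e,f]$ and $(X_+-X_-)^2$ as the square of a single algebraic quantity that is computed by \cref{lem:offdiag}. Fix orientations $e:a\to b$ and $f:c\to d$, write $\chi_e := e_a - e_b$ and $\chi_f := e_c - e_d$, and set $Y(g,h) := \langle L^\dagger \chi_g, \chi_h\rangle$.

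First, I would establish $[e] = [\cdot]\,Y(e,e)$. The simple bijection $T \mapsto T\setminus\{e\}$ between spanning trees containing $e$ and spanning $2$-forests separating $a$ and $b$ gives $[e]=[a\mid b]$, and then \cref{prop:pseudo} together with \eqref{res dist inner product} delivers the claim; analogously $[f]=[\cdot]\,Y(f,f)$. By \cref{lem:offdiag},
\[
[\cdot]\,Y(e,f) \;=\; [ac\mid bd] - [ad\mid bc].
\]
Next, I would invoke the $k=2$ case of the Burton--Pemantle transfer current theorem,
\[
[e,f] \;=\; [\cdot]\,\bigl(Y(e,e)\,Y(f,f) - Y(e,f)^2\bigr).
\]
This can be derived by combining $[e]=[\cdot]\,Y(e,e)$ with the standard observation that conditioning the uniform spanning tree on $\{e\in T\}$ yields the uniform spanning tree of $G/e$, and that contraction of $e$ updates the transfer current matrix by the Schur complement rule $Y^{G/e}(f,f) = Y(f,f) - Y(e,f)^2/Y(e,e)$. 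Combining the two identities yields
\[
[e][f] - [\cdot][e,f] \;=\; [\cdot]^2\, Y(e,f)^2 \;=\; \bigl([ac\mid bd] - [ad\mid bc]\bigr)^2.
\]

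Finally, I would exhibit a bijection identifying $X_+ = [ad\mid bc]$ and $X_- = [ac\mid bd]$. Given a unicyclic spanning subgraph $H$ whose unique cycle $C$ contains both $e$ and $f$, traverse $C$ in the direction that uses $e$ as $a\to b$; then $f$ is used either as $c\to d$ (contributing to $X_+$) or as $d\to c$ (contributing to $X_-$). Deleting $e$ and $f$ from $H$ leaves a spanning $2$-forest whose two arcs of $C\setminus\{e,f\}$ connect $b$ to $c$ and $d$ to $a$ in the $X_+$ case, producing the two components $\{a,d,\dots\}$ and $\{b,c,\dots\}$ counted by $[ad\mid bc]$; the $X_-$ case is symmetric and yields $[ac\mid bd]$. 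The inverse map simply adds $e$ and $f$ back. Hence $X_+-X_- = [ad\mid bc]-[ac\mid bd]$, and squaring this identity together with the previous display completes the proof.

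The main obstacle is the transfer current identity: the Schur complement argument above relies on the fact that the UST conditioned on an edge is the UST of the contracted graph, which needs justification. A more elementary but longer route stays strictly within the paper's toolkit by expanding $[\cdot][e,f]$ as an alternating sum of $3\times 3$ minors of $L$ via \cref{thm:all minors} and matching terms with the expansion of $[a\mid b][c\mid d]$ obtained by recording the locations of $\{c,d\}$ (resp.\ $\{a,b\}$) within each $2$-forest; this collapses to the required identity after careful cancellation of the alternating signs coming from Chaiken's formula.
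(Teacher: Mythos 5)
The paper does not prove \cref{thm:bsst}; it is quoted from Brooks--Smith--Stone--Tutte, so there is no internal argument to compare against. Your proof is correct as an independent derivation, and it is worth recording how it fits with the paper's surrounding material. The two computational pillars are sound: $[e]=[a\mid b]=[\cdot]\,Y(e,e)$ via \cref{prop:pseudo} and \eqref{res dist inner product}, and $[\cdot]\,Y(e,f)=[ac\mid bd]-[ad\mid bc]$ via \cref{lem:offdiag}. The bijection identifying $X_+$ with $[ad\mid bc]$ and $X_-$ with $[ac\mid bd]$ is also right; for the inverse map the one point worth making explicit is that after adding $e$ to the $2$-forest one gets a spanning tree in which the unique $c$--$d$ path must cross between the two former components and hence must use $e$, so the cycle created by adding $f$ automatically contains both edges with the claimed relative orientation. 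The only imported ingredient is the $k=2$ transfer current identity $[e,f]=[\cdot]\bigl(Y(e,e)Y(f,f)-Y(e,f)^2\bigr)$; your contraction/Schur-complement sketch is the standard way to get it, but the update rule $Y^{G/e}(f,f)=Y(f,f)-Y(e,f)^2/Y(e,e)$ is itself asserted rather than proved, so this step is really a citation of Burton--Pemantle (with the trivial degenerate case $Y(e,e)=0$, i.e.\ $e$ a loop, handled separately). Note that this identity, rewritten in vertex form, is exactly the paper's \cref{thm:equal} (Choe), which the paper presents as an equivalent reformulation of \cref{thm:bsst}; so your argument in effect re-derives Choe's identity from the pseudoinverse formalism and then converts it back to the edge/cycle statement via your bijection. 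That is a legitimate and self-contained route, just be aware that within the paper's own narrative citing \cref{thm:equal} directly would be circular, whereas your transfer-current derivation avoids that.
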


This theorem has an equivalent formulation in terms of vertices rather than edges due to Choe~\cite{choe-2008}. 
We note that if we let $e = \{a, b\}$, then $[e] = [a\mid b]$, since there is an explicit bijection from spanning trees containing $e$ to two-component spanning forests in which $a$ and $b$ are different components -- just remove $e$.
Similarly, if we let $f = \{c, d\}$, then 
\[
[e, f] = [a\mid bc\mid d] + [a\mid bd\mid c] + [a\mid cd\mid b] + [c\mid ab\mid d].
\]
We have the following result for more vertices.

\begin{theorem}[{\cite[Theorem 11]{choe-2008}}]
\label{thm:equal}
Let $G = (V, E)$ be a graph and $a, b, c, d \in V$.
\[
[a\mid b][c\mid d] = \Big( [a \mid bc\mid d]+[a\mid bd\mid c] + [b\mid ac\mid d] + [b\mid ad\mid c] \Big) [\cdot ] + \Big( [ad\mid bc] - [ac\mid bd] \Big)^2.
\]
Equivalently, in terms of the arboreal gas measure, 
\begin{multline*} 
\forest{G}(a , b)\,\forest{G}(c , d)=
\Big(\forest{G}(a , b c , d) + 
\forest{G}(a , bd , c) +
\forest{G}(b , ac , d) +\forest{G}(b , ad , c) \Big)\\
 \times \forest{G}(abcd)
+ \Big(\forest{G}(ac , bd)-\forest{G}(ad , bc) \Big)^2,
\end{multline*}
as $\lambda \rightarrow \infty$.
\end{theorem}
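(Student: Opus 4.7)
The plan is to deduce Choe's identity from the edge-version \cref{thm:bsst} (BSST) by applying it to an augmented graph. Let $G^+$ denote the multigraph formed from $G$ by adjoining two new edges $e = (a,b)$ and $f = (c,d)$, adding parallel copies if $(a,b)$ or $(c,d)$ is already an edge of $G$. Because $e$ and $f$ are now edges of $G^+$, BSST applies to this pair, and the remaining task is to re-express each BSST quantity computed in $G^+$ in terms of vertex-based counts in $G$.

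Decomposing spanning trees of $G^+$ by which of $e, f$ they use, and inverting via the standard bijection between spanning trees containing a prescribed edge and spanning $2$-forests separating its endpoints, one obtains the identifications
\[
[\cdot ]^{G^+} = [\cdot ] + [a\mid b] + [c\mid d] + P, \qquad [e,f]^{G^+} = P,
\]
together with $[e]^{G^+} = [a\mid b] + P$ and $[f]^{G^+} = [c\mid d] + P$, where every count on the right is in $G$ and
\[
P := [a\mid bc\mid d] + [a\mid bd\mid c] + [b\mid ac\mid d] + [b\mid ad\mid c]
\]
is exactly the four-term sum appearing on the right-hand side of the theorem. The crucial subtlety is that the ``cross'' $3$-forest patterns $[ac\mid bd\mid \cdot ]$ and $[ad\mid bc\mid \cdot ]$ do \emph{not} contribute to $[e,f]^{G^+}$: in such a $3$-forest, adding $e=(a,b)$ merges the blocks containing $\{a,c\}$ and $\{b,d\}$, placing $c$ and $d$ in a common component, so inserting $f$ subsequently would create a cycle rather than complete a spanning tree.

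For the signed unicyclic count, deleting both $e$ and $f$ from a unicyclic subgraph of $G^+$ whose unique cycle contains them leaves a spanning $2$-forest of $G$ that separates both $\{a,b\}$ and $\{c,d\}$, and the two possible cyclic orderings of $a,b,c,d$ around the cycle correspond bijectively to $2$-forests of types $[ac\mid bd]$ and $[ad\mid bc]$. This gives $X_+ - X_- = \pm\bigl([ac\mid bd] - [ad\mid bc]\bigr)$. Substituting everything into BSST for $G^+$,
\[
([a\mid b] + P)([c\mid d] + P) - ([\cdot ] + [a\mid b] + [c\mid d] + P)\,P = \bigl([ac\mid bd] - [ad\mid bc]\bigr)^2,
\]
and on expansion every term containing $P^2$, $[a\mid b]\,P$, or $[c\mid d]\,P$ cancels, leaving precisely $[a\mid b][c\mid d] - [\cdot ]\cdot P = \bigl([ac\mid bd] - [ad\mid bc]\bigr)^2$, which is Choe's identity. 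The arboreal gas formulation then follows by dividing by $[\cdot ]^2$ and interpreting each ratio of counts as the leading-order coefficient of the corresponding connection probability under $\forest{G}$ in the $\lambda \to \infty$ limit.

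The main technical obstacle is the bookkeeping that establishes the four $G^+$-count identifications above, especially the verification that the cross-term $3$-forest patterns cancel out of $[e,f]^{G^+}$ (and from the ``uses both virtual edges'' portion of $[e]^{G^+}$ and $[f]^{G^+}$), together with tracking the orientation convention for $X_+$ and $X_-$ so that the correct signed difference $[ac\mid bd] - [ad\mid bc]$ emerges, its sign being absorbed into the square on the right.
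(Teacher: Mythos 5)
Your proposal is correct, and it supplies a proof where the paper gives none: the paper cites the identity from Choe and only remarks that it follows ``after some algebraic rearrangement'' from the all minors matrix tree theorem (\cref{thm:all minors}), which is presumably Choe's own route via Cauchy--Binet-type minor expansions. Your derivation instead reduces the vertex statement to the edge statement \cref{thm:bsst} by adjoining the two virtual edges $e=(a,b)$, $f=(c,d)$; the dictionary you set up ($[e]^{G^+}=[a\mid b]+P$, $[e,f]^{G^+}=P$, $[\cdot]^{G^+}=[\cdot]+[a\mid b]+[c\mid d]+P$, and $X_+-X_-=\pm([ac\mid bd]-[ad\mid bc])$) is exactly right, including the observation that the crossing $3$-forest patterns cannot arise in $[e,f]^{G^+}$ because the two virtual edges would then join the same pair of blocks. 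This is in fact a cleaned-up and completed version of the informal dictionary the paper sketches in the two sentences preceding the theorem (where, note, the displayed formula for $[e,f]$ contains a typo --- it lists $[a\mid cd\mid b]$ and $[c\mid ab\mid d]$, which are impossible since removing $f$ must separate $c$ from $d$ and removing $e$ must separate $a$ from $b$; your four-term sum $P$ is the correct one and matches the theorem statement itself). Two small caveats you should make explicit: \cref{thm:bsst} must be invoked for the multigraph $G^+$, which may have parallel edges when $(a,b)$ or $(c,d)$ already lies in $E$ (the BSST identity does hold for multigraphs, but the paper states it only for ``a graph''); and the argument tacitly assumes $a\neq b$ and $c\neq d$, since otherwise $e$ or $f$ is a loop --- harmless, as both sides of the identity are then trivially handled, but worth a sentence given that the paper later specializes to $b=d$.
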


We remark here that this theorem is after some algebraic rearrangement, an application of the all minors matrix tree theorem (\cref{thm:all minors}).

What happens for general $\lambda$? The assertion that for all $\lambda$, we have 
\begin{multline}
\forest{G}(a , b)\,\forest{G}(c , d) \geq \\
\Big(\forest{G}(a , b c , d) + \forest{G}(a , bd , c) + 
\forest{G}(b , ac , d) +\forest{G}(b , ad , c) \Big)
 \times \forest{G}(abcd).
\end{multline} 
is a famous open problem due to Kahn~\cite[Conjecture~10.11]{kahn-2000} and popularized by Grimmett and Winkler~\cite{grimmett-winkler-2004}. This is equivalent to saying that edge events in random forests are negatively correlated, 
\begin{align} 
\label{eqn:kgw}
\forest{G}(e)\,\forest{G}(f) \geq
\forest{G}(e, f). 
\end{align}
This problem remains mysterious though, and is far from resolution. We say a little more about this at the end of this section. 

There is another way though, of unspooling \cref{thm:bsst}. This involves keeping track of where $c, d$ lie when $a$ and $b$ are in different components, and similarly for where $a, b$ lie when $c$ and $d$ are in different components. In the limit $\lambda = \infty$, we recover a weak version of \cref{thm:bsst}. 

\begin{theorem}
\label{thm:four-point-correlation}
Let $G = (V, E)$ be a graph and $a, b, c, d \in V$.
Then for large enough $\lambda$, we have the inequality
\begin{multline}
\label{4pt ineq}
 \big( 
   \forest{G}(a , bcd) + \forest{G}(acd, b) + \forest{G}(ac, bd) + \forest{G}(ad, bc) 
 \big) \\
\times \big(
   \forest{G}(c , abd) + \forest{G}(abc, d) + \forest{G}(ac, bd) + \forest{G}(ad, bc)
 \big) \\
\le 
 \big( \forest{G}(a , b c , d) + \forest{G}(a , bd , c) + 
   \forest{G}(b , ac , d) + \forest{G}(b , ad , c)
 \big)\,\forest{G}(abcd) \\
+ \big( \forest{G}(ac , b d) - \forest{G}(ad , bc) \big)^{2}.
\end{multline}

\end{theorem}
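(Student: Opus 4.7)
The plan is to perform an asymptotic analysis in $\lambda$ near $\lambda = \infty$. For any event $X$ whose induced partition of $\{a,b,c,d\}$ has $k_X$ blocks, one has the Laurent expansion
\[
\forest{G}(X) = \frac{[X]}{[\cdot]\,\lambda^{k_X - 1}} + \frac{\alpha_X\,[\cdot] - [X]\,[\cdot]_1}{[\cdot]^2\,\lambda^{k_X}} + O(\lambda^{-k_X - 1}),
\]
where $\alpha_X$ counts spanning forests with $k_X + 1$ components whose induced partition on $\{a,b,c,d\}$ is exactly $X$ (so the extra component has its vertex set in $V \setminus \{a,b,c,d\}$). Each side of the claimed inequality is then of order $\lambda^{-2}$, and verifying it for large $\lambda$ reduces to controlling the first nontrivial coefficient of $\mathrm{RHS} - \mathrm{LHS}$ in this expansion.

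At leading $O(\lambda^{-2})$ order, the first factor of the LHS has leading coefficient $[a\mid b]/[\cdot]$: every 2-component spanning forest separating $a$ from $b$ induces exactly one of the four listed partitions of $\{a,b,c,d\}$, so $[a\mid bcd] + [acd\mid b] + [ac\mid bd] + [ad\mid bc] = [a\mid b]$; similarly the second factor has leading coefficient $[c\mid d]/[\cdot]$. On the RHS, the first summand contributes $\sum_i [C_i]/[\cdot]$ and the squared term contributes $([ac\mid bd]-[ad\mid bc])^2/[\cdot]^2$. By \cref{thm:equal} (Choe's theorem) these two leading coefficients match exactly, so they cancel in $\mathrm{RHS} - \mathrm{LHS}$.

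Passing to the subleading $O(\lambda^{-3})$ order, a direct expansion yields the next coefficients. The contributions proportional to $[\cdot]_1$ cancel after a second application of Choe's identity, leaving the condition
\begin{multline*}
[\cdot]\,\alpha_C + [abcd]_1 \sum_i [C_i] + 2\bigl([ac\mid bd] - [ad\mid bc]\bigr)\bigl(\alpha_{ac\mid bd} - \alpha_{ad\mid bc}\bigr)\\
\geq [a\mid b]\,\alpha_{c\mid d}' + [c\mid d]\,\alpha_{a\mid b}',
\end{multline*}
where $\alpha_C := \sum_i \alpha_{C_i}$, and $\alpha_{a\mid b}'$, $\alpha_{c\mid d}'$ are the sums of $\alpha_X$ over the four partitions $X$ of $\{a,b,c,d\}$ appearing in the first and second factor of the left-hand side, respectively.

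The main obstacle is proving this non-negativity. My plan is to decompose each $\alpha_X$ by conditioning on the vertex set $U \subseteq V \setminus \{a,b,c,d\}$ spanned by the extra component, writing $\alpha_X = \sum_{\emptyset \neq U} [\cdot]_{G[U]} \cdot [X]_{G[V\setminus U]}$, and then attempt a termwise verification in $U$. With $H := G[V \setminus U]$, the per-$U$ inequality relates products of spanning-forest counts on $G$ to those on $H$, and has the shape of a ``polarised'' Choe identity. Applying \cref{thm:equal} to both $G$ and $H$ should allow it to be rearranged into a manifestly non-negative combination of squared differences between the $G$-data and the $H$-data---a direct analogue of the way Choe's identity itself arises as a Cauchy--Schwarz statement for the bilinear form $\langle L^\dagger \cdot,\cdot\rangle$ via \cref{lem:offdiag}. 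If a clean termwise argument is unavailable, the back-up is to group subsets $U$ and exploit Rayleigh monotonicity (\cref{thm:rayleigh}) for the pair $(G, G[V \setminus U])$.
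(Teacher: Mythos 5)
Your proposal follows essentially the same route as the paper: expand in $\lambda$, cancel the leading order via Choe's identity (\cref{thm:equal}), reduce the subleading order to the inequality on the $[\ast]_1$-counts, decompose those counts over the vertex set of the extra component (your $U$ is the complement of the paper's $H$), and finish the per-$H$ inequality by a double application of Choe plus Cauchy--Schwarz for the bilinear form attached to $L_H^\dagger - L^\dagger$ via \cref{lem:offdiag} --- which is exactly the paper's \cref{lem:strong rayleigh}. The only caveat is that your primary plan (the ``polarised Choe'' / Cauchy--Schwarz argument) is the one that works and requires the full Loewner inequality $L^\dagger \preceq L_H^\dagger$, whereas the fallback of invoking Rayleigh monotonicity alone would only control the diagonal entries of that quadratic form and would not yield the needed cross-term bound.
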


This is a positive correlation inequality. Note that the regular positive correlation inequality for large $\lambda$,
\begin{equation}
\label{4pt ineq2}
\forest{G}(a \not\!\leftrightarrow b)\,\forest{G}(c \not\!\leftrightarrow d) \leq
\forest{G}(a \not\!\leftrightarrow b \;\text{and}\; c\not\!\leftrightarrow d ),
\end{equation}
 is true for trivial reasons when $\lambda$ is large (after converting to subgraph counts, the right hand side is of the order $\lambda^{2n-3}$ while the left hand side is of the order $\lambda^{2n-4}$). Our correlation inequality in \cref{thm:four-point-correlation} is more refined. 

There is a special version of this theorem about which we can say more. In the case when $b = d$, \cref{thm:four-point-correlation} becomes

\begin{corollary}
\label{cor:three-point}
Let $G = (V, E)$ be a graph and $a, b, c \in V$.
Then for large enough $\lambda$, we have the inequality
\begin{align*} 
\forest{G}(a , b)\,\forest{G}(b , c) \leq& \,\,\forest{G}(a , b , c)\,\,\forest{G}(abc)
+ \left(\forest{G}(ac , b)\right)^2
\end{align*}
Equivalently, we may rewrite this as 
\begin{align*} 
e_2 \left( \forest{G}(a , bc),\,\forest{G}(ab , c),\, \forest{G}(ac , b) \right)
\leq& \,\,\forest{G}(a , b , c)\,\,\forest{G}(abc),
\end{align*}
where we recall that $e_2(x, y, z) = xy + xz + yz$ is the elementary symmetric polynomial of degree $2$.
Further, we have equality in the limit by \cref{thm:bsst}.
\end{corollary}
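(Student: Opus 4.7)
The plan is to derive this corollary as a direct specialization of \cref{thm:four-point-correlation} via the substitution $d = b$. The key observation is that whenever a multi-component probability $\forest{G}(S_1, S_2, \ldots)$ would, after the identification, place the vertex $b$ (formerly also $d$) in two distinct components, it must equal zero, since a vertex cannot lie in two parts of a partition. The remaining terms collapse onto three-point expressions in the obvious way.

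First I would apply this substitution to the left-hand side of \eqref{4pt ineq}. In the first factor, both $\forest{G}(acd, b)$ and $\forest{G}(ad, bc)$ vanish, while $\forest{G}(a, bcd)$ collapses to $\forest{G}(a, bc)$ and $\forest{G}(ac, bd)$ collapses to $\forest{G}(ac, b)$; the first factor thus becomes $\forest{G}(a, bc) + \forest{G}(ac, b)$. An analogous computation shows that the second factor becomes $\forest{G}(ab, c) + \forest{G}(ac, b)$. On the right-hand side, three of the four three-component terms vanish for the same reason and only $\forest{G}(a, bd, c) = \forest{G}(a, b, c)$ survives; the factor $\forest{G}(abcd)$ becomes $\forest{G}(abc)$, and the squared correction collapses to $\forest{G}(ac, b)^2$ because $\forest{G}(ad, bc)$ vanishes.

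Expanding the resulting product on the left yields four terms, one of which is exactly $\forest{G}(ac, b)^2$; cancelling it against the matching term on the right leaves
\begin{align*}
\forest{G}(a, bc)\,\forest{G}(ab, c) + \forest{G}(a, bc)\,\forest{G}(ac, b) + \forest{G}(ac, b)\,\forest{G}(ab, c) \leq \forest{G}(a, b, c)\,\forest{G}(abc),
\end{align*}
and the left-hand side is exactly $e_2(\forest{G}(a, bc), \forest{G}(ab, c), \forest{G}(ac, b))$. To verify the equality claim in the limit $\lambda \to \infty$, I would rerun the same substitution starting from the identity in \cref{thm:equal}; the identical vanishing and cancellation preserves equality throughout.

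The main obstacle is purely bookkeeping: correctly tracking which four-point terms vanish, which collapse onto the corresponding three-point terms, and which cancel across the inequality. There is no genuine mathematical difficulty once \cref{thm:four-point-correlation,thm:equal} are in hand; the content of the corollary lies entirely in the parent results.
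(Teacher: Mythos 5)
Your proposal is correct and matches the paper's approach precisely: the paper obtains the corollary by specializing \cref{thm:four-point-correlation} at $d = b$, and your bookkeeping of which probabilities vanish, which collapse onto three-point expressions, and how the $\left(\forest{G}(ac, b)\right)^2$ term cancels across the inequality is all accurate. The verification of the limiting equality by rerunning the same substitution through \cref{thm:equal} is likewise sound.
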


Interestingly, \cref{cor:three-point} was shown to be true for the percolation measure recently by Gladkov~\cite{gladkov_2024}. 

To prove \cref{thm:four-point-correlation}, we need the following result.

\begin{lemma}
\label{lem:strong rayleigh}
Let $H$ be a connected subgraph of $G$, with $a,b,c,d\in V(H) \subseteq V(G)$. Then we have
\begin{equation}
\label{strong rayleigh}
\left(\dfrac{[a\mid b]^H}{[\cdot ]^H} - \dfrac{[a\mid b]}{[\cdot ]}\right)
\left(\dfrac{[c\mid d]^H}{[\cdot ]^H} - \dfrac{[c\mid d]}{[\cdot ]}\right) 
\geq 
\left(\dfrac{[ad\mid bc]^H-[ac\mid bd]^H}{[\cdot ]^H} - \dfrac{[ad\mid bc]-[ac\mid bd]}{[\cdot ]}\right)^2,
\end{equation}
where the superscript $H$ in our existing notation refers to counts within $H$.
\end{lemma}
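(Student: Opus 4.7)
The plan is to translate \eqref{strong rayleigh} into a statement about inner products with Laplacian pseudoinverses, reduce it to a Cauchy--Schwarz inequality for a positive semidefinite operator, and verify the PSD property using the Dirichlet (flow-energy) principle. Set $x = e_a - e_b$ and $y = e_c - e_d$, viewed either in $\mathbb{R}^{V(G)}$ or, via the restriction, as $x_H, y_H \in \mathbb{R}^{V(H)}$, so that $x = \iota(x_H)$ and $y = \iota(y_H)$, where $\iota \colon \mathbb{R}^{V(H)} \hookrightarrow \mathbb{R}^{V(G)}$ is the inclusion (extension by zero). By \cref{prop:pseudo} and \cref{lem:offdiag}, each of the six forest ratios appearing in \eqref{strong rayleigh} is an inner product $\langle L_\bullet^\dagger \cdot, \cdot\rangle$ for the appropriate graph and vectors; in particular, $[a\mid b]^G/[\cdot]^G = \langle L_G^\dagger x, x\rangle$ and $([ad\mid bc]^G - [ac\mid bd]^G)/[\cdot]^G = -\langle L_G^\dagger x, y\rangle$, together with the analogous identities over $H$. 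Introducing the symmetric operator $M := L_H^\dagger - \iota^T L_G^\dagger \iota$ on $\mathbb{R}^{V(H)}$, the inequality \eqref{strong rayleigh} becomes
\[
\langle M x_H, x_H\rangle\,\langle M y_H, y_H\rangle \;\geq\; \langle M x_H, y_H\rangle^2.
\]

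The heart of the proof is to show that $M$ is PSD on the zero-sum subspace of $\mathbb{R}^{V(H)}$ (in which $x_H$ and $y_H$ both live), i.e., that for every $z \in \mathbb{R}^{V(H)}$ with $\sum_i z_i = 0$,
\[
\langle L_H^\dagger z, z\rangle \;\geq\; \langle L_G^\dagger \iota(z), \iota(z)\rangle.
\]
This is a strong form of Rayleigh monotonicity, sharpening \cref{thm:rayleigh}. I would prove it using the variational (Dirichlet) characterization of the Laplacian pseudoinverse: both sides equal the minimum of the edge-flow energy $\sum_e f(e)^2$ over flows $f$ with prescribed vertex divergence ($z$ on $H$, and $\iota(z)$ on $G$). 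Every admissible flow on $H$ extends by zero on $E(G)\setminus E(H)$ to an admissible flow on $G$ with the same energy; the divergence is automatically zero at vertices of $V(G)\setminus V(H)$ since they carry no flow. Hence the minimum over the smaller flow class dominates the minimum over the larger, giving $M \succeq 0$ on the zero-sum subspace.

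Factoring $M = N^T N$ on that subspace and applying the ordinary Cauchy--Schwarz inequality to the pair $Nx_H, Ny_H$ then yields
\[
\langle M x_H, y_H\rangle^2 \;=\; \langle N x_H, N y_H\rangle^2 \;\leq\; \|N x_H\|^2\,\|N y_H\|^2 \;=\; \langle M x_H, x_H\rangle\,\langle M y_H, y_H\rangle,
\]
completing the proof. The main obstacle is the PSD step in the case where $H$ is a strict vertex subgraph of $G$; the flow-energy formulation handles vertex and edge deletion uniformly, since flows on $H$ automatically vanish on edges incident only to $V(G)\setminus V(H)$, so one need not worry about Schur-complement or Kron-reduction subtleties.
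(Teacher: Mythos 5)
Your proof is correct, and it follows the same overall skeleton as the paper's: translate the six ratios into inner products against Laplacian pseudoinverses via \cref{prop:pseudo} and \cref{lem:offdiag}, define the difference operator $M$, and close with Cauchy--Schwarz. The genuine divergence is in how the positive-semidefiniteness of $M$ is established. The paper argues via the Loewner order: from $L_H \preceq L_G$ (extending $L_H$ by zero to $V(G)$) and a kernel-containment observation, it invokes the antitonicity of inversion to conclude $L_G^\dagger \preceq L_H^\dagger$. You instead prove the bilinear-form inequality $\langle L_H^\dagger z, z\rangle \geq \langle L_G^\dagger \iota(z), \iota(z)\rangle$ for zero-sum $z$ directly, using the Thomson/Dirichlet variational characterization of effective resistance energy: every admissible flow on $H$ extends by zero to an admissible flow on $G$ with equal energy, so the $G$-minimum is no larger. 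Your route has a concrete advantage here: the paper's operator-inequality step is clean only when $H$ spans $V(G)$ (same kernel for $L_H$ and $L_G$), whereas the lemma allows $V(H) \subsetneq V(G)$, in which case $L_H^\dagger - L_G^\dagger$ need not be PSD as a matrix on $\mathbb{R}^{V(G)}$ (the required inequality can fail for vectors supported off $V(H)$). Since the lemma is applied in the paper to proper vertex subgraphs, your formulation --- restricting to zero-sum vectors supported on $V(H)$ and verifying PSD there via flows --- is both more elementary and more precisely scoped than the argument in the text.
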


The left hand side of \eqref{strong rayleigh} is positive, because successive applications of \cref{thm:rayleigh} shows each of the factors are positive. Therefore, \cref{lem:strong rayleigh} is a quantitative strengthening of Rayleigh's theorem.

\begin{proof}
We will use the Moore--Penrose psuedoinverse of the Laplacian defined in \cref{sec:alg graph}.
Recall from \cref{prop:pseudo} and \eqref{res dist inner product} that 
\[
\frac{[a \mid b]}{[\cdot ]} = \langle L^{\dagger} (e_a - e_b), e_a - e_b\rangle, \quad 
\frac{[c \mid d]}{[\cdot ]} = \langle L^{\dagger} (e_c - e_d), e_c - e_d\rangle.
\]
Let $M = L^{\dagger}_H-L^{\dagger}$. 
The left hand side of \eqref{strong rayleigh} therefore becomes
\[
\langle M (e_a - e_b), e_a - e_b\rangle \, \langle M (e_c - e_d), e_c - e_d \rangle.
\]
Using \cref{lem:offdiag} twice, we see that 
\[
\langle M (e_a - e_b), e_c - e_d\rangle 
=
\dfrac{[ac\mid bd]^H-[ad\mid bc]^H}{[\cdot ]^H} - \dfrac{[ac\mid bd]-[ad\mid bc]}{[\cdot ]}.
\]
Now, since $H$ is a subgraph of $G$, $L - L_H$ is a PSD matrix and hence $L_H \preceq L$. Further, the kernel of $L$ is contained in the kernel of $L_H$. Then, by an argument similar to that sketched in \cref{sec:alg graph}, 
$L^{\dagger} \preceq L_H^{\dagger}$, and therefore $M$ is PSD by definition of the Loewner order (see \cref{sec:alg graph}). 
The desired inequality follows then from the Cauchy-Schwarz inequality since
\[
\langle M (e_a - e_b), e_a - e_b \rangle \, \langle M (e_c - e_d), e_c - e_d\rangle 
\geq 
\left( \langle M (e_a - e_b), e_c - e_d\rangle\right)^2,
\]
completing the proof.
\end{proof}

\begin{proof}[Proof of \cref{thm:four-point-correlation}]
Recall that the notation $[*]_1$ counts the number of forests respecting the partition with one more than the minimum possible number of components.
We then have that 
\[
\mu[a\mid bcd] = \dfrac{\lambda^{n-1}[a\mid bcd]+\lambda^{n-2}[a\mid bcd]_1+O(\lambda^{n-3})}{\lambda^{n-1}[\cdot ]+\lambda^{n-2}[\cdot ]_1+O(\lambda^{n-3})}.
\]
Substituting such expressions in \eqref{4pt ineq} 
and simplifying, we get a rational function with the numerator a polynomial in $\lambda$ of degree $2n-2$. The highest order term cancels out by \cref{thm:equal}. 
To show that the next highest order term is positive, we need to show that
\begin{equation}
\begin{split}
\label{raw}
&\Big([a \mid bcd]+[acd \mid b]+[ac \mid bd]+[ad \mid bc]\Big)
\Big([abc \mid d]_1+[c \mid abd]_1+[ac \mid bd]_1+[ad \mid bc]_1\Big)\\
&+\Big([abc \mid d]+[c \mid abd]+[ac \mid bd]+[ad \mid bc]\Big)\Big([a \mid bcd]_1+[acd \mid b]_1+[ac \mid bd]_1+[ad \mid bc]_1\Big)\\
\leq &\Big([a \mid bc \mid d]+[a \mid bd \mid c]+[b \mid ad \mid c]+[b \mid ac \mid d]\Big)\,[abcd]_1\\
&+\Big([a \mid bc \mid d]_1+[a \mid bd \mid c]_1+[b \mid ad \mid c]_1+[b \mid ac \mid d]_1\Big)\,[abcd]\\
& +2\Big([ac \mid bd]-[ad \mid b c]\Big)\,\Big([ac \mid bd]_1-[ad \mid bc]_1\Big).
\end{split}
\end{equation}

Something miraculous happens here which makes proving \eqref{raw} easy. All expressions of the form $[\ast]_1$ specify where the four vertices $a, b,c, d$ lie as a subgraph in $G$.
For instance, 
\[
[a\mid bcd]_1 = \sum_{H} [a\mid bcd]^{H} \,[\cdot ]^{H^c}, 
 \]
where the sum is over all subgraphs $H$ induced by the two components containing $a, b, c, d$. 
With this in hand, we may write the left hand side of \eqref{raw} as
\begin{align*} 
&\Big([a \mid bcd]+[acd \mid b]+[ac \mid bd]+[ad \mid bc]\Big)\Big([abc \mid d]_1+[c \mid abd]_1+[ac \mid bd]_1+[ad \mid bc]_1\Big)\\
+&\Big([a \mid bcd]_1+[acd \mid b]_1+[ac \mid bd]_1+[ad \mid bc]_1\Big)\Big([abc \mid d]+[c \mid abd]+[ac \mid bd]+[ad \mid bc]\Big)\\
=&\sum_{H}\Big([a \mid bcd]+[acd \mid b]+[ac \mid bd]+[ad \mid bc]\Big) \\
& \hspace*{1cm} \times \Big([abc \mid d]^H+[c \mid abd]^H+[ac \mid bd]^H+[ad \mid bc]^H\Big)\,[\cdot ]^{H^c}\\
&+\sum_H\Big([a \mid bcd]^H+[acd \mid b]^H+[ac \mid bd]^H+[ad \mid bc]^H\Big) \\
& \hspace*{1cm} \times \Big([abc \mid d]+[c \mid abd]+[ac \mid bd]+[ad \mid bc]\Big)\,[\cdot ]^{H^c}\\
=&\sum_H ([a\mid b][c\mid d]^H+[a\mid b]^H[c\mid d])[\cdot ]^{H^{c}}.
\end{align*}
The right hand side of \eqref{raw} becomes
\begin{align*}
&\sum_H \Big([a \mid b c \mid d]+[a \mid b d \mid c]+[b \mid a d \mid c]+[b \mid a c \mid d]\Big)\,\,[abcd]^H[\cdot ]^{H^{c}}\\
+&\sum_H \Big([a \mid b c \mid d]^H+[a \mid b d \mid c]^H+[b \mid a d \mid c]^H+[b \mid a c \mid d]^H\Big)\,\,[abcd][\cdot ]^{H^{c}}\\
+& 2\sum_H \Big([ac \mid b d]-[ad \mid bc]\Big)\Big([ac \mid b d]^H-[ad \mid bc]^H\Big)[\cdot ]^{H^{c}}.
\end{align*}
We will show that we have a term by term inequality. That is, for every $H$ containing $a, b, c, d$, we have that 
\begin{equation}
\begin{split}
\label{fourptH}
[a\mid b]^H[c\mid d]+ [a\mid b][c\mid d]^H
& \leq \Big([a \mid b c \mid d]+[a \mid b d \mid c]+[b \mid a c \mid d]+[b \mid a c \mid d]\Big)\,\,[abcd]^H\\
+& \Big([a \mid b c \mid d]^H+[a \mid b d \mid c]^H+[b \mid a c \mid d]^H+[b \mid a c \mid d]^H\Big)\,\,[abcd]\\
+&2 \Big([ac \mid b d]-[ad \mid bc]\Big)\Big([ac \mid b d]^H-[ad \mid bc]^H\Big).
\end{split}
\end{equation}
This is equivalent to showing that 
\begin{equation}
\begin{split}
\dfrac{[a\!\mid\! b]^H}{[]^H} \cdot \dfrac{[c\!\mid\! d]}{[]} 
+ \dfrac{[a\!\mid\! b]}{[]} \cdot \dfrac{[c\!\mid\! d]^H}{[]^H}\leq
&\dfrac{[a \!\mid\! b c \!\mid\! d] + [a \!\mid\! b d \!\mid\! c] + [b \!\mid\! a c \!\mid\! d] + [b \!\mid\! a d \!\mid\! c]}{[]} \\
+&\dfrac{[a \!\mid\! b c \!\mid\! d]^H + [a \!\mid\! b d \!\mid\! c]^H + [b \!\mid\! a c \!\mid\! d]^H + [b \!\mid\! a d \!\mid\! c]^H}{[]^H} \\
+&2  \dfrac{[ac \!\mid\! b d] - [ad \!\mid\! b c]}{[]} 
   \cdot \dfrac{[ac \!\mid\! b d]^H - [ad \!\mid\! b c]^H}{[]^H}.
\end{split}
\end{equation}
We have by \cref{thm:equal} that 
\begin{equation}
\dfrac{[a \!\mid\! b c \!\mid\! d]+[a \!\mid\! b d \!\mid\! c]+[b 
\!\mid\! a c \!\mid\! d]+[b \!\mid\! a c\! \mid\! d]}{[]}=\dfrac{[a\!\mid\! b]}{[]}\dfrac{[c\!\mid\! d]}{[]}-\left(\dfrac{[ac\! \mid\! b d]-[ad\! \mid \!bc]}{[]}\right)^2
\end{equation}
and similarly for the subgraph $H$. The desired inequality then becomes 
\begin{equation}
\begin{split}
\dfrac{[a\mid b]^H}{[]^H} \cdot \dfrac{[c\mid d]}{[]} 
+ \dfrac{[a\mid b]}{[]} \cdot \dfrac{[c\mid d]^H}{[]^H} \leq& \dfrac{[a\mid b]}{[]}\dfrac{[c\mid d]}{[]}-\left(\dfrac{[ac \mid b d]-[ad \mid bc]}{[]}\right)^2\\
+& \dfrac{[a\mid b]^H}{[]^H}\dfrac{[c\mid d]^H}{[]^H}-\left(\dfrac{[ac \mid b d]^H-[ad \mid bc]^H}{[]^H}\right)^2\\
+& 2 \dfrac{[ac \mid b d] - [ad \mid b c]}{[]} 
   \cdot \dfrac{[ac \mid b d]^H - [ad \mid b c]^H}{[]^H}.
   \end{split}
   \end{equation}
Rearranging, we get that proving \eqref{fourptH} is equivalent to showing that 
\begin{equation*}
\left(\dfrac{[a\mid b]^H}{[\cdot ]^H} - \dfrac{[a\mid b]}{[\cdot ]}\right)\left(\dfrac{[c\mid d]^H}{[\cdot ]^H} - \dfrac{[c\mid d]}{[\cdot ]}\right) \geq 
\left(\dfrac{[ad\mid bc]^H-[ac\mid bd]^H}{[\cdot ]^H} - \dfrac{[ad\mid bc]-[ac\mid bd]}{[\cdot ]}\right)^2,
\end{equation*}
which is proved in \cref{lem:strong rayleigh}, completing the proof.
\end{proof}

Returning to the conjecture of Kahn--Grimmett--Winkler in \eqref{eqn:kgw}, we propose the following strengthening.

\begin{conjecture}
Let $a, b, c, d$ be vertices in a graph $G$.
Then we have that
\begin{align*} 
\forest{G}(a , b)\,\forest{G}(c , d) \geq & \left(\forest{G}(a, bc , d)+\forest{G}(a , bd, c)+\forest{G}(b,  ac, d)+\forest{G}(b , ad , c)\right)\\
&\times \forest{G}(abcd) + \left(\forest{G}(ac , bd)-\forest{G}(ad , bc)\right)^2.
\end{align*}
\end{conjecture}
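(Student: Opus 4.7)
The plan is to establish the conjecture coefficient by coefficient in $\lambda$, extending the strategy used in the proof of \cref{thm:four-point-correlation}. Multiplying through by $Z_G(\lambda)^2$ turns both sides into polynomials in $\lambda$ of degree at most $2n-4$; the top coefficients (those of $\lambda^{2n-4}$) agree by \cref{thm:equal}, and the next coefficients (of $\lambda^{2n-5}$) already satisfy the inequality by the argument behind \cref{thm:four-point-correlation}. The goal is to show that the coefficient of $\lambda^{2n-4-k}$ on the LHS minus that on the RHS is non-negative for every $k \geq 0$, which would give a stronger, coefficient-wise version of the conjecture.

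The first step is to group every pair of forests appearing on either side by the ordered pair $(H_1, H_2)$ of induced subgraphs, where each $H_i \supseteq \{a,b,c,d\}$ is the union of those components of the corresponding forest that meet $\{a,b,c,d\}$. Writing $T^H := [a\mid bc\mid d]^H + [a\mid bd\mid c]^H + [b\mid ac\mid d]^H + [b\mid ad\mid c]^H$ and $\Delta^H := [ac\mid bd]^H - [ad\mid bc]^H$, this reduction leads to the pairwise claim: for every ordered pair $(H_1, H_2)$ of induced subgraphs containing $\{a,b,c,d\}$,
\begin{align*}
[a\mid b]^{H_1}[c\mid d]^{H_2} + [a\mid b]^{H_2}[c\mid d]^{H_1} \leq T^{H_1}[\cdot]^{H_2} + T^{H_2}[\cdot]^{H_1} + 2\Delta^{H_1}\Delta^{H_2}.
\end{align*}
The diagonal case $H_1 = H_2$ reduces to \cref{thm:equal} applied inside that subgraph and holds with equality. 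The case $H_2 = G$ (or $H_1 = G$) is exactly the pairwise inequality \eqref{fourptH} proved in \cref{thm:four-point-correlation}, which in turn followed from \cref{lem:strong rayleigh} via Cauchy--Schwarz applied to $L_G^{\dagger} - L_{H_i}^{\dagger}$.

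The main obstacle is the general pair $(H_1, H_2)$ in which neither subgraph contains the other. Polarizing via \cref{lem:offdiag}, the pairwise claim rearranges into a bilinear statement about the two PSD symmetric forms $L_{H_1}^{\dagger} - L_G^{\dagger}$ and $L_{H_2}^{\dagger} - L_G^{\dagger}$; the cross term is signed and has no single-Laplacian Cauchy--Schwarz bound. A natural route is to pass through the common refinement $H_1 \cap H_2$, using the Loewner chain $L_G^{\dagger} \preceq L_{H_1}^{\dagger}, L_{H_2}^{\dagger} \preceq L_{H_1 \cap H_2}^{\dagger}$ implicit in \cref{lem:strong rayleigh}, and attempt a joint Cauchy--Schwarz. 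The hard part is that this does not immediately produce the desired bilinear bound; this is precisely the point at which the present conjecture brushes up against the open Kahn--Grimmett--Winkler conjecture, which it strengthens, and any resolution here will require a genuinely new signed four-point inequality for pseudoinverses.

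A complementary line of attack worth pursuing is edge deletion-contraction induction. Conditioning on the presence of an edge $e \in E(G)$ not incident to $\{a,b,c,d\}$ writes each quantity as a $\lambda$-weighted combination of the analogous quantities in $G \setminus e$ and $G/e$, and \cref{lem:offdiag} produces clean identities for the signed correction $\Delta^G$ under this operation. For the induction to close, one would need a Rayleigh-type monotonicity statement for the signed quantity $\forest{G}(ac,bd) - \forest{G}(ad,bc)$ under edge deletion, analogous to \cref{thm:rayleigh} for the two-point quantity. This appears to be an independently interesting open question and is the principal hurdle we would have to overcome.
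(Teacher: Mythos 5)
This statement is posed in the paper as a \emph{conjecture}: the authors offer no proof, only a \texttt{SageMath} verification on weighted complete graphs $K_n$ for $n \leq 6$, and they explicitly present it as a strengthening of the open Kahn--Grimmett--Winkler conjecture \eqref{eqn:kgw}. So you are right not to claim a complete proof, and your honesty about where the argument stalls is appropriate. However, your outline contains a concrete directional error that would derail the plan even before the admitted obstacles. The left-hand side of \cref{thm:four-point-correlation} is the \emph{restricted} product, in which $c$ and $d$ are forced into the components of $a$ and $b$ (and vice versa); the conjecture's left-hand side $\forest{G}(a,b)\,\forest{G}(c,d)$ exceeds it by the contributions of forests in which some of $a,b,c,d$ occupy a third component. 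At the top coefficient all of these agree and \cref{thm:equal} gives equality, but at the $\lambda^{2n-5}$ coefficient the argument behind \cref{thm:four-point-correlation} proves that the restricted product is \emph{at most} the right-hand side. That is the reverse of what the conjecture needs: to get ``$\geq$'' at that order you must show that the extra three-component terms (e.g.\ contributions of type $[a\mid bc \mid d]\,[c\mid d]$ appearing in $[a\mid b]_1[c\mid d]$ but not in the restricted expansion) dominate the nonnegative deficit that \cref{thm:four-point-correlation} establishes. Your claim that the next coefficient ``already satisfies the inequality by the argument behind'' that theorem is therefore unjustified, and in fact points the wrong way.

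The same sign problem infects your pairwise claim: you state it with ``$\leq$'', matching \eqref{fourptH}, but summing inequalities of that orientation over pairs $(H_1,H_2)$ would produce an \emph{upper} bound on the product, not the lower bound the conjecture asserts. Moreover, the grouping by induced subgraphs $H$ that makes the proof of \cref{thm:four-point-correlation} work relies on every term of the form $[\ast]_1$ specifying the location of all four vertices $a,b,c,d$; the unrestricted counts $[a\mid b]_k$ do not have this property, since $c$ or $d$ may lie in a component meeting neither $a$ nor $b$, so the reduction to a pairwise statement indexed by $(H_1,H_2)$ does not go through as stated. Your closing observations --- that the general pair $(H_1,H_2)$ would require a signed Cauchy--Schwarz for two different PSD perturbations $L_{H_i}^\dagger - L_G^\dagger$ with no common reference, and that a deletion--contraction induction would need a Rayleigh-type monotonicity for the signed quantity $\forest{G}(ac,bd)-\forest{G}(ad,bc)$ --- are reasonable diagnoses of genuine difficulties, but the structural issue above means the correct target is a different inequality (a lower bound on the ``extra'' terms), not an extension of \cref{lem:strong rayleigh} to general pairs.
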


We have verified this conjecture in \texttt{SageMath} {for arbitrary $\lambda$} by considering the arboreal gas measure on the complete graph $K_n$ for $n \leq 6$, where each edge has an arbitrary symbolic weight {and $\lambda$ is another symbolic variable}. The numerators of the difference of the right hand side and the left hand side is a multivariate polynomial in these weights with positive coefficients. In the case of $n = 6$, there are $15$ variables and the difference has $56,922$ terms.

\section*{Acknowledgements}
{The first author (AA) acknowledges support from SERB Core grant CRG/ 2021/001592 as well as the DST FIST program - 2021 [TPN - 700661]. The second author (SL) was supported by 2022-03875 from VR, the Swedish science council. The third author (MR)  
 was supported in part by the International Centre for Theoretical Sciences (ICTS) for the program -   Discrete integrable systems: difference equations, cluster algebras and probabilistic models (code: ICTS/disdecap2024/10)
}

\bibliographystyle{plain}
\bibliography{main}

\appendix

\section{Computational results for other values of $q$}
\label{sec:app}

{We prove \cref{thm:bunkbed rc false} here.}
We work with the counterexample to the bunkbed conjecture for hypergraphs due to Hollom~\cite{hollom_2025}. While this example can be simplified somewhat (two of the hyperedges can be replaced by regular edges and still yield a counterexample), we work with the original for simplicity. The hypergraph is shown in \cref{fig:Hollom}.
The two points $u, v$ which negate the bunkbed conjecture are respectively the vertices $1$ and $10$. The vertices $3, 5, 8$ in the graph are posts. 

\begin{figure}[ht]
\centering 
\begin{tikzpicture}
      \node[circle, fill=black, label=right:$10$, inner sep=2pt] (10) at (9.05,0) {};
      \node[circle, fill=black, label=left:$1$, inner sep=2pt] (1) at (1,0) {};
  \node[circle, fill=black, label=above:$2$, inner sep=2pt] (2) at (2,1.73) {};
    \node[circle, fill=black, label=left:$4$, inner sep=2pt] (4) at (5,1) {};
        \node[circle, fill=black, label=below:$7$, inner sep=2pt] (7) at (5,-1.73) {};

    \node[circle, fill=black, label=left:$6$, inner sep=2pt] (6) at (3.5,0) {};
    \node[circle, fill=black, label=right:$8$, inner sep=2pt] (8) at (6.5,0) {};

  \node[circle, fill=black, label=below:$3$, inner sep=2pt] (3) at (2,-1.73) {};
  \node[circle, fill=black, label=above:$5$, inner sep=2pt] (5) at (8,1.73) {};
    \node[circle, fill=black, label=below:$9$, inner sep=2pt] (9) at (8,-1.73) {};

\draw (2,-1.73) circle (6pt);
\draw (6.5,0) circle (6pt);
\draw (8,1.73) circle (6pt);
\draw[green] (1,0) circle (6pt);
\draw[green] (9.05,0) circle (6pt);

\draw  (2) -- (3);
\draw  (2) -- (5) -- (4) -- (2);
\draw  (4) -- (6) -- (8) -- (4);
\draw  (6) -- (3) -- (7) -- (6);
\draw  (8) -- (7) -- (9) -- (8);
\draw (5) -- (10) -- (9) -- (5);
\filldraw[draw=black,fill=lightgray] (8,1.73) -- (9.05,0) -- (8,-1.73) -- (8,1.73);
\filldraw[draw=black,fill=lightgray] (2,1.73) -- (8,1.73)  -- (5,1) -- (2,1.73);
\filldraw[draw=black,fill=lightgray] (2,1.73) -- (1,0)  -- (2,-1.73) -- (2,1.73);

\filldraw[draw=black,fill=lightgray] (2,-1.73) -- (3.5,0)  -- (5,-1.73)  -- (2,-1.73);
\filldraw[draw=black,fill=lightgray] (6.5,0) -- (5,-1.73)  -- (8,-1.73) -- (6.5,0);
\filldraw[draw=black,fill=lightgray]  (5,1) -- ( (3.5,0)  --  (6.5,0) --  (5,1);

\end{tikzpicture}
\caption{Hollom's counterexample.}
\label{fig:Hollom}
\end{figure}
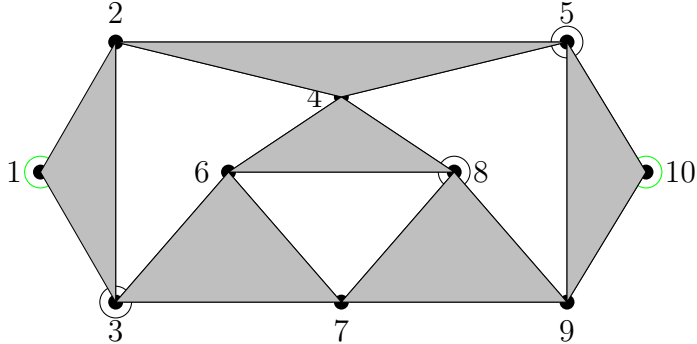

We will next replace each hyperedge by a graph gadget $G_n$ with three distinguished vertices which we label $a, b, c$ and $n$ other vertices as shown in \cref{fig:gadget} (We will see that we can take $n = 31$). We will slot the vertex $a$ onto the vertex in the hyperedge that is a post and the vertices  $b, c$ onto the other two. Since the graph gadget is symmetric in $b, c$, the way they are matched will not matter. We assign weight $p$ to all the horizontal edges in $G_n$ and $1-p$ to each of the other edges, following the lead of Gladkov-Pak-Zimin \cite{gladkov_pak_zimin_2024}.

Let  $p_{abc}$ be the probability (with respect to the random cluster measure on $G_n$) that $a, b, c$ are in the same connected component. The numbers $p_{a\mid bc}, p_{b\mid ac}, p_{c \mid ab}, p_{a\mid b\mid c}$ are defined similarly.

\begin{figure}[ht]
\begin{tikzpicture}
 \node[circle, fill=black, label = above:$a$, inner sep=2pt] (a) at (12,1) {};
    \node[circle, fill=black, label = below:$b$,  inner sep=2pt] (b) at (15,-1) {};
        \node[circle, fill=black,  inner sep=2pt] (c) at (14,-1) {};
    \node[circle, fill=black,  inner sep=2pt] (d) at (13,-1) {};
    \node[circle, fill=black,  inner sep=2pt] (e) at (12,-1) {};

       \node[circle, fill=black,  inner sep=2pt] (f) at (11,-1) {};
    \node[circle, fill=black,  inner sep=2pt] (g) at (10,-1) {};
    \node[circle, fill=black, label = below:$c$, inner sep=2pt] (h) at (9,-1) {};

\draw (12, 1) circle (6pt);

\draw  (a) -- (b) -- (c) -- (d) -- (e) -- (f) -- (g) -- (h) -- (a);
\draw (a) -- (c);
\draw (a) -- (d);
\draw (a) -- (e);
\draw (a) -- (f);
\draw (a) -- (g);

\end{tikzpicture}
\caption{The Gladkov--Pak--Zimin graph gadget~\cite{gladkov_pak_zimin_2024} for $n = 5$.}
\label{fig:gadget}
\end{figure}
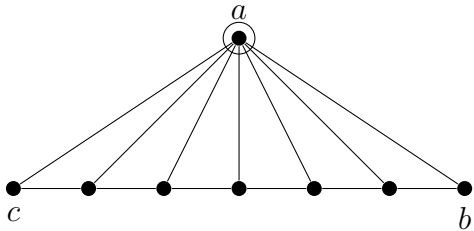

After replacing each hyperedge by a copy of $G_n$, we get a regular graph and a computer assisted calculation\footnote{See the accompanying \texttt{Python} notebook \emph{generalq.ipynb}, where the vertices are numbered $0$ through $19$ as per the usual \texttt{Python} convention.} allows us to compute the polynomials $P_n(p, q)$ where
\[
P_n(p, q):=\mathbb{P}[1 \leftrightarrow 10]-\mathbb{P}[1 \leftrightarrow 20] 
\]
The values of $q > 0$ such that $P_n(1/100, q) < 0$ is presented in \cref{tab:intervals}. The computations were done using rational numbers and are exact.

\begin{table}[h]
\centering
\renewcommand{\arraystretch}{1.2}
\setlength{\tabcolsep}{4pt}

\begin{tabular}{|c|c|c|c|c|c|}
\hline
\textbf{$n$} 
&  3 & 4 & 5 & 6 & 11 \\ \hline
\textbf{Interval} 
& [0.7,\,1.08] & [0.62,\,1.25] & [0.59,\,1.32] & [0.58,\,1.36] 
& [0.56,\,1.42] \\ \hline
\end{tabular}

\vspace{0.3em}

\begin{tabular}{|c|c|c|c|c|c|}
\hline
\textbf{$n$} 
& 21  & 31 & 41  & 51 & 1001 \\ \hline
\textbf{Interval} 
& [0.56,\,1.42]  & [0.56,\,1.43] & [0.56,\,1.43] 
& [0.56,\,1.43] & [0.56,\,1.43]\\ \hline
\end{tabular}

\caption{Truncated intervals (two decimal places) at various indices for $q$ so that $P_n(1/100, q) < 0$.}
\label{tab:intervals}
\end{table}

We will need the following lemma. It says that projected and conditioned random cluster measures can be mutually bounded. 

\begin{lemma}\label{lem:qcomparison}
Let $G$ and $H$ be graphs on the same vertex set and let $m$ be the number of edges in $K = G \cap H$, the graph induced by the edges in both $G$ and $H$. Let $\rc{G \cup H}$ be a random cluster measure on $G \cup H$ and let $\rc{G}$ be the random cluster measure on $G$. Let $r = \max(\{q, q^{-1}\})$. Then for any $A \subset E(G)$, we have that 
\[  
r^{-m} \rc{G}(A) \leq \rc{G \cup H}[\{B \subset E(G \cup H) \mid B \cap E(G) = A\}] 
\leq r^{m} \rc{G}(A).
\]
In particular, the inequality depends on $m$ (the size of $E(K)$), but not otherwise on $H$. 
\end{lemma}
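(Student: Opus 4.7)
The plan is to expand both measures by their definitions and reduce to a pointwise cluster-count estimate. First I would write
\[
\rc{G}(A) = \frac{p^{|A|}(1-p)^{|E(G)|-|A|}\,q^{\kappa(A)}}{Z_G},
\]
and, setting $F = E(G \cup H) \setminus E(G)$,
\[
\rc{G \cup H}[\tilde{A}] = \frac{p^{|A|}(1-p)^{|E(G)|-|A|}}{Z_{G \cup H}}\,S(A), \quad S(A) := \sum_{C \subseteq F} p^{|C|}(1-p)^{|F|-|C|}\,q^{\kappa(A \cup C)},
\]
where $\tilde{A} = \{B \subseteq E(G \cup H) : B \cap E(G) = A\}$ and $Z_G, Z_{G \cup H}$ are the two partition functions. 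The common factor $p^{|A|}(1-p)^{|E(G)|-|A|}$ cancels in the ratio $\rc{G \cup H}[\tilde{A}]/\rc{G}(A)$, which therefore splits as $(Z_G/Z_{G \cup H}) \cdot (S(A)/q^{\kappa(A)})$; it suffices to sandwich each factor between $r^{-m}$ and $r^{m}$.

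Next, the heart of the proof is the pointwise bound
\[
r^{-m}\,q^{\kappa(A)} \;\le\; q^{\kappa(A \cup C)} \;\le\; r^{m}\,q^{\kappa(A)} \qquad \text{for every } C \subseteq F.
\]
Granting this, the identity $\sum_{C \subseteq F} p^{|C|}(1-p)^{|F|-|C|} = 1$ immediately yields $r^{-m} q^{\kappa(A)} \le S(A) \le r^{m} q^{\kappa(A)}$. Summing this pointwise bracketing over $A \subseteq E(G)$ weighted by $p^{|A|}(1-p)^{|E(G)|-|A|}$ yields the analogous bracketing $r^{-m} Z_G \le Z_{G \cup H} \le r^{m} Z_G$ for the partition functions, and combining these two facts proves the lemma.

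The main obstacle is establishing the cluster-count estimate with the sharp exponent $m = |E(K)|$. The elementary ``single-edge-shifts-$\kappa$-by-at-most-one'' principle only gives $|\kappa(A \cup C) - \kappa(A)| \le |C| \le |F|$, and hence the cruder estimate $r^{\pm |F|}$. Sharpening the dependence to $m$ is the nontrivial content of the lemma and requires exploiting the structure of the shared subgraph $K = G \cap H$: intuitively, the added edges of $F$ can only alter the cluster structure in a way ``visible'' through the $m$ shared edges of $K$, so only those edges can generate genuinely new merges relative to $\kappa(A)$. Making this precise — e.g.\ by a matroidal rank comparison between $F$ and $E(K)$, or by induction on $|F|$ peeling off edges whose endpoints are already connected in $(V, A)$ — is the technical crux of the proof.
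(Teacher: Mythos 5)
The paper gives no proof of this lemma (it is explicitly ``left to the interested reader''), so there is nothing to compare against; judged on its own merits, your argument has a fatal gap. The pointwise estimate $|\kappa(A\cup C)-\kappa(A)|\le m$ at the heart of your plan is simply false, and no amount of matroidal bookkeeping will rescue it: in the paper's own application of the lemma, $F$ consists of the pendant edges of the $K_t$'s, and taking $C$ to be all $k$ pendant edges at a single post merges $k$ previously isolated vertices into one component, so $\kappa(A\cup C)-\kappa(A)=-k$ while $m=3$. Your fallback strategy of bounding the two factors $S(A)/q^{\kappa(A)}$ and $Z_G/Z_{G\cup H}$ \emph{separately} by $r^{\pm m}$ is equally doomed: each factor individually depends on all of $H$ (e.g.\ $Z_{G\cup H}/Z_G$ grows without bound as pendant vertices are added), and only their \emph{product} is controlled. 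The correct route is to recognize that $S(A)=q^{\kappa(A)-|S_A|}\,Z_H^{\xi(A)}$, where $Z_H^{\xi}$ is a partition function of $H$ with boundary condition $\xi$ given by the partition that the $A$-clusters induce on the vertices shared by $G$ and $H$, and then to compare $Z_H^{\xi}$ across the finitely many boundary conditions $\xi$ (any two differ by a factor $r^{\pm(s-1)}$, $s$ the number of shared vertices). Summing over $A$ shows $Z_{G\cup H}$ carries the \emph{same} $H$-dependent factor up to $r^{\pm(s-1)}$, and the cancellation in the ratio is what yields a bound independent of $k$.

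A second issue, which you could not have detected without attempting the boundary-condition argument, is that the lemma as literally stated needs an extra hypothesis: $G$ and $H$ must interact only through $K$ (i.e.\ every vertex covered by both an edge of $G\setminus K$ and an edge of $H\setminus K$ must lie in $V(K)$), and the natural exponent is the number of shared vertices rather than $m=|E(K)|$. Otherwise the claim fails outright: take $V=\{u,v,w\}$, $E(G)=\{uv,vw\}$, $E(H)=\{uw\}$, so $m=0$ and the lemma asserts exact equality of $\rc{G}(\{uv,vw\})$ with the corresponding marginal of $\rc{G\cup H}$; at $p=\tfrac12$, $q=2$ these are $\tfrac19$ and $\tfrac17$ respectively. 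This confirms that no proof which sees only $|E(K)|$ and ignores how $G$ and $H$ share vertices — such as the one you propose — can be correct. In the paper's application the extra hypothesis holds ($K$ consists of the three post edges $t_1t_2$, and these are the only places the gadget graphs touch $\widetilde H$), so the intended statement is salvageable, but your proof is not.
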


We leave the proof to the interested reader. 

\begin{proof}[Proof of \cref{thm:bunkbed rc false}]
Let us take the hypergraph in \cref{fig:Hollom} and replace each hyperedge by the 
Gladkov--Pak--Zimin graph gadget $G_n$ for say, $n = 31$, where the edge weights are $1/100$ for the horizontal edges and $99/100$ for the other edges. Then, as seen in the \cref{tab:intervals}, we have that $\mathbb{P}[1 \leftrightarrow 10] <\mathbb{P}[1 \leftrightarrow 20]$. This shows that the `bunkbed with posts' conjecture is false in general for these values of $q$. 

This argument can be bootstrapped to an example for the regular bunkbed using the same argument as in \cite[Theorem 6.1]{gladkov_pak_zimin_2024}, at the cost of making the graph larger. We sketch the argument here. 

Referring back to \cref{fig:Hollom}, we now slot in the graph gadgets $G_{31}$ at each hyperedge in the fashion outlined above. 
Call the resulting graph $H$ and the bunkbed of this graph $\widetilde{H}$. For each $t \in T$, attach $k$ `pendant' vertices $u_{t, 1}, \ldots , u_{t, k}$. In the bunkbed graph these will lead to a vertex downstairs and one upstairs, which we call $u_{t,i}^1$ and $u_{t, i}^2$ respectively.  Call the new graph $H_k$ and its bunkbed $\widetilde{H}_k$, and let us use $K_t$ to denote the graph induced by $\{t_1, t_2, u_{t, i}^1, u_{t, i}^2 \mid i \in [k]\}$ for each $t \in T$.

Put the random cluster measure on $\widetilde{H}_k$  and let $\mathcal{T}$ be the random set of posts that are realised. 
Hollom~\cite{hollom_2025} made the following observations. 
\begin{itemize}
    \item Suppose the set of posts in this realization is $S$. If $S \cap V(H) \supsetneq T$ (i.e. the set of posts contains $T$ and at least one more vertex in $H$), then the posts form a cutset in the graph $\widetilde{H}_k$ separating the vertices $1$ and $10$ and the connection probabilities $\mathbb{P}[1 \leftrightarrow 10]$ and $\mathbb{P}[1 \leftrightarrow 20]$ are equal.
    \item Suppose $T \setminus S \neq \emptyset$.
    Let us call a vertex $t$ in $T$ that is not in $S$, but where $t_1$ and $t_2$ are connected within $K_t$, a \textit{quasi-post}. If the set of posts and quasi-posts strictly contains $T$, then once again we have that the connection probabilities are equal. 
\end{itemize}
With this in hand, we see that the connection probabilities can be different only in two cases. 
\begin{itemize}
\item The set of posts in $H$ is exactly $T$ in which case the difference is negative. Call this difference $c$, where $c < 0$ and note that this is independent of $k$. This is because the connectivity between $1$ and $10$, say, only depends on what the posts in $H$ are. The other `pendant' vertices are irrelevant. 
\item At least one of the vertices in $T$ is neither a post nor a quasi-post, in which case the difference could be positive (and clearly at most 1). 
\end{itemize}

An argument by Hollom~\cite[Claim~5.2]{hollom_2025} shows that the probability that $t \in T$ does not become a quasi-post (given that it is not a post) decays exponentially with $k$ in the case of percolation. The same remains true for any $q$ by a simple argument; let it decay as $\alpha(q)^k$ for some fixed $\alpha(q) < 1$, where $\alpha(q)$ only depends on $q$ (and not on $k$). We now apply \cref{lem:qcomparison} to see that inside the graph $H_k$, the probability that the set of posts in $H$ does not contain $T$ can be bounded independent of $k$. Let $r = \max(\{q, q^{-1}\})$. Then, since $\widetilde{H}$ and $\cup_{t \in T}K_t$ meet in $3$ edges,
\[
r^{-3}\,\rc{\widetilde{H}}[\mathcal{T} \cap V(H) \neq T] 
\leq  \rc{\widetilde{H}_k}[\mathcal{T} \cap V(H) \neq T] 
\leq r^{3} \rc{\widetilde{H}}[\mathcal{T} \cap V(H) \neq T]. 
\]
Let 
\[
I_k = \rc{\widetilde{H}_k}[1 \leftrightarrow 10] - \rc{\widetilde{H}_k}[1 \leftrightarrow 20].
\]
We may write
\begin{align*}
I_k &\leq  c\,\rc{\widetilde{H}_k}[\mathcal{T} \cap V(H) = T ]+\rc{\widetilde{H}_k}[\mathcal{T} \cap V(H) \neq T \text{ and not all vertices in }T \\
 & \vspace*{2cm} \text{ are posts or  quasiposts} ]\\
&\leq  c\,\rc{\widetilde{H}_k}[\mathcal{T} \cap V(H) = T ]+\alpha(q)^k \,\rc{\widetilde{H}_k}[\mathcal{T} \cap V(H) \neq T  ]\\
&\leq c\,r^{-3}\,\rc{\widetilde{H}}[\mathcal{T} \cap V(H) = T ]+\alpha(q)^kr^{3}\rc{\widetilde{H}}[\mathcal{T} \cap V(H) \neq T  ].
\end{align*}
Taking $k$ to be appropriately large, we can make $I_k < 0$, since $c < 0$. This yields a counterexample to the regular (weighted) bunkbed problem for $q$ in our range. A counterexample to the regular unweighted bunkbed conjecture can be then created using series-parallel extensions.
\end{proof}

Here is an interesting fact: When we look at the original hypergraph version of the example (without slotting in the graph gadgets), we have that $p_{a\mid bc}, p_{b\mid ac}, p_{c \mid ab}$ are all zero (either the hyperedge is present, which means all the vertices are in the same component) or it is not (which means they are all in distinct components). The polynomial $\mathbb{P}[1 \leftrightarrow 10]-\mathbb{P}[1 \leftrightarrow 20]$ then equals $p_{a\mid b \mid c}^6 p_{abc}^6 \bigg(q^3 - 5q^2 + 10q - 7\bigg) q^5$. The polynomial $q^3 - 5q^2 + 10q - 7$ has a single real root at roughly $1.43$, which is what the right end points of the intervals above seem to converge to. 
\end{document}